\documentclass[12pt]{amsart}
\usepackage{amsmath, amssymb}
\usepackage{array}
\usepackage[frame,cmtip,arrow,matrix,line,graph,curve]{xy}
\usepackage{graphpap, color, paralist, pstricks}
\usepackage[mathscr]{eucal}
\usepackage[pdftex]{graphicx}
\usepackage[pdftex,colorlinks,backref=page,citecolor=blue]{hyperref}
\usepackage{tikz}
\usepackage{tikz-cd}
\usepackage{enumitem}
\usepackage{comment}

\setlength{\oddsidemargin}{0in}
\setlength{\evensidemargin}{0in}
\setlength{\marginparwidth}{0in}
\setlength{\marginparsep}{0in} 
\setlength{\marginparpush}{0in}
\setlength{\topmargin}{0in}
\setlength{\headheight}{12pt}
\setlength{\headsep}{10pt}
\setlength{\footskip}{.3in}
\setlength{\textheight}{8.6in}
\setlength{\textwidth}{6.5in}
\setlength{\parskip}{4pt}
\linespread{1.1}

\newtheorem{theorem}{Theorem}[section]
\newtheorem{theoremletter}{Theorem}

\newtheorem{proposition}[theorem]{Proposition}
\newtheorem{corollary}[theorem]{Corollary}
\newtheorem{lemma}[theorem]{Lemma}

\theoremstyle{definition}
\newtheorem{definition}[theorem]{Definition}

\newtheorem{question}[theorem]{Question}
\newtheorem{remark}[theorem]{Remark}

\newcommand{\CC}{\mathbb{C} }

\newcommand{\TT}{\mathbb{T} }

\newcommand{\ZZ}{\mathbb{Z} }

\newcommand{\cA}{\mathcal{A} }

\newcommand{\cS}{\mathcal{S} }

\newcommand{\cU}{\mathcal{U} }

\newcommand{\rL}{\mathrm{L} }
\newcommand{\rM}{\mathrm{M} }

\newcommand{\be}{\mathbf{e} }

\newcommand{\bm}{\mathbf{m} }
\newcommand{\bn}{\mathbf{n} }

\newcommand{\lt}{\mathrm{lt} }
\newcommand{\MRY}{\mathrm{MRY}}
\newcommand{\LRY}{\mathrm{LRY}}

\newcommand{\RMC}{\mathsf{B}(\Sigma, V)^{+}}
\newcommand{\TD}{\mathsf{T}}

\DeclareMathOperator{\Int}{\mathrm{Int}}

\title{Center of generalized skein algebras}
\author{Hiroaki Karuo}
\address{Department of Mathematics, Gakushuin University, Mejiro, Toshima-ku, Tokyo, Japan}
\email{hiroaki.karuo@gakushuin.ac.jp}

\author{Han-Bom Moon}
\address{Department of Mathematics, Fordham University, New York, NY 10023}
\email{hmoon8@fordham.edu}

\author{Helen Wong}
\address{Department of Mathematical Sciences, Claremont McKenna College,
Claremont, CA 91711}
\email{hwong@cmc.edu}

\date{\today}
\subjclass{57K31, 57K20, 13F60, 20G42}
%16G30

\begin{document}
\maketitle
\begin{abstract}
We consider a generalization of the Kauffman bracket skein algebra of a surface that is generated by loops and arcs between marked points on the interior or boundary, up to skein relations defined by Muller and Roger-Yang.   We compute the center of the Muller-Roger-Yang skein algebra  and show that this algebra is almost Azumaya when the quantum parameter $q$ is a primitive $n$-th root of unity with odd $n$. We also discuss the implications on the representation theory of the Muller-Roger-Yang  generalized skein algebra.
\end{abstract}

\section{Introduction}\label{sec:intro}
Since its introduction \cite{Prz91, Tur91}, the Kauffman bracket skein algebra of a surface $\Sigma$  has been studied for its rich connection to many areas of low-dimensional topology, including knot theory, hyperbolic geometry through the character variety, and topological quantum field theory.  In the course of these studies, many  generalizations of the skein algebra have emerged in the past decade \cite{RY14, Mul16, Le18, BKL24}.  Current research explores both the inter-connected relationships between the various generalizations and with the many fields of mathematics related to them.

In this paper, our main object is a generalization that comes from deformation quantization of the decorated Teichm\"uller space of Penner \cite{Pen87}.    This \emph{Muller-Roger-Yang generalized skein algebra} $\cS_{q}^{\MRY}(\Sigma)$ was introduced in \cite{BKL24},  based on earlier works of Roger-Yang \cite{RY14} and Muller \cite{Mul16}.   It is generated by embedded loop classes in $\Sigma$ as well as arcs with endpoints at either interior punctures or marked points on the boundary of $\Sigma$, and there are extra skein relations for arcs whose endpoints meet.     Our results concern the algebraic structure of $\cS_{q}^{\MRY}(\Sigma)$, which is an important step towards understanding its representation theory and further connections with hyperbolic geometry.

A reason why we are particularly interested in $\cS_{q}^{\MRY}(\Sigma)$ is its natural connection with cluster algebras \cite{FZ02} from combinatorial algebraic geometry.  More specifically, for any triangulable oriented surface $\Sigma$, the cluster algebra $\cA(\Sigma)$ is a combinatorial commutative algebra  generated by arc classes \cite{FST08, FT18}, and it is natural to consider its quantization. A quantum cluster algebra $\cA_{q}(\Sigma)$, which is a deformation quantization of $\cA(\Sigma)$, can be defined \cite{BZ05} for a surface without any interior punctures, and  it is identical up to localization with  Muller's generalization of the skein algebra  $\cS_{q}^{\rM}(\Sigma)$  \cite{Mul16}. However, $\cA_{q}(\Sigma)$ is not well-defined when there is an interior puncture. On the other hand,  when $q = 1$, there is an explicit relationship among $\cA(\Sigma)$, $\cS_{q}^{\MRY}(\Sigma)$, and another cluster algebra related to $\cA(\Sigma)$ that is called the upper cluster algebra $\cU(\Sigma)$ \cite{MW24, KMW25+}.  With this point of view, one may understand $\cS_{q}^{\MRY}(\Sigma)$ as a deformation quantization of $\cA(\Sigma)$, thus providing an alternative approach to a quantum cluster algebra even in the case of surfaces with interior punctures.

\subsection{Main results}
In this paper, our main contribution is the complete characterization of the center of the Muller-Roger-Yang skein algebra. 

%This is a crucial step toward classifying the finite-dimensional representations of $\cS_{q}^{\MRY}(\Sigma)$, and we will discuss in more detail in Section \ref{ssec:represenation}. 
Let $\Sigma$ be an oriented surface with interior punctures and marked points on its boundary, and let $\{ v_i\}$ denote the set of interior punctures.   Let  $\cS_{q}^{\MRY}(\Sigma)$ be the Muller-Roger-Yang skein algebra, which is a $\CC[v_{i}^{\pm}]$-algebra we define more precisely in Definition \ref{def:MRY}.  Let $T_{n}(x)$ be the $n$-th Chebyshev polynomial of the first kind.  

%When $\alpha$ is a loop class without self-intersection on its diagram, $T_{n}(\alpha)$ can be obtained by threading $\alpha$ $n$ times, too. \hiroaki{The last sentence is confusing for me.}

\begin{theoremletter}\label{thm:mainthm}
At a primitive root of unity $q$ of odd order $n$, the center of the Muller-Roger-Yang skein algebra $Z(\cS_{q}^{\MRY}(\Sigma))$ is the $\CC[v_{i}^{\pm}]$-subalgebra generated by the following elements. 
\begin{enumerate}
\item $T_{n}(\alpha)$, \; where $\alpha$ is a loop class  that has a diagram with no self-intersections, ;
\item  $\frac{1}{\sqrt{v}\sqrt{w}}T_{n}(\sqrt{v}\sqrt{w}\beta)$, \; where $\beta$ is an arc class connecting two distinct interior punctures $v$ and $w$ and  has a diagram with no self-intersections;
\item $\beta^{n}$, \;  where $\beta$ is an arc class with one endpoint at a boundary marked point and has a diagram with no self-intersections;
\item $\beta_{D} := \prod_{i}\beta_{i}$, \; where $D$ is a component of $\partial \Sigma$ that is the union of cyclically ordered, boundary arcs $\beta_{1}, \beta_{2}, \cdots, \beta_{k}$ whose endpoints are the marked points on $D$. 
\end{enumerate}
\end{theoremletter}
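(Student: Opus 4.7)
The plan is to adapt the strategy used for the ordinary Kauffman bracket skein algebra at roots of unity (Bonahon--Wong, Frohman--Kania-Bartoszyńska--L\^e) to the MRY setting. Using a triangulation $\Delta$ of $\Sigma$, the MRY skein algebra admits a quantum--trace type embedding $\cS_{q}^{\MRY}(\Sigma) \hookrightarrow \cT_{\Delta}$ into a quantum torus over $\CC[v_i^{\pm}]$ whose generators are the arcs of $\Delta$. At a primitive $n$-th root of unity of odd order, the center of $\cT_{\Delta}$ is a Laurent polynomial ring generated by the $n$-th powers of the torus generators together with the Laurent monomials indexed by the kernel of the skew--symmetric exchange matrix. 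The center of $\cS_{q}^{\MRY}(\Sigma)$ is then $\cT_{\Delta}^{\mathrm{center}} \cap \cS_{q}^{\MRY}(\Sigma)$, and the proof reduces to identifying this intersection with the subalgebra generated by (1)--(4).

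For centrality, I would argue family by family. The central role of $T_{n}$ on simple loops is the classical ``miraculous'' Chebyshev--Frobenius phenomenon, which survives in the MRY setting because the additional puncture relations are compatible with the Frobenius map. For an arc $\beta$ between two interior punctures $v,w$, the puncture skein relation introduces factors of $v$ and $w$ when $\beta$ is resolved against another arc, and the correct normalization is the twist $\sqrt{v}\sqrt{w}\,\beta$; since $n$ is odd, $T_{n}$ is an odd polynomial, so $\tfrac{1}{\sqrt{v}\sqrt{w}}T_{n}(\sqrt{v}\sqrt{w}\beta)$ is a genuine polynomial in $\beta$ over $\CC[v_i^{\pm}]$, and centrality follows from the same Frobenius argument. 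For an arc $\beta$ with a boundary endpoint, crossing another element yields a $q$-commutator that vanishes on $n$-th powers, giving centrality of $\beta^{n}$. Finally, for a boundary component $D$, the cyclically ordered product $\beta_{D}=\prod_{i}\beta_{i}$ corresponds under the quantum trace to a Laurent monomial whose exponent vector is a peripheral cycle, and peripheral cycles lie in the kernel of the exchange matrix, hence $\beta_{D}$ is central.

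The main obstacle is the reverse containment: showing that every central element of $\cS_{q}^{\MRY}(\Sigma)$ already lies in the subalgebra generated by (1)--(4). Concretely, one must show that every vector in the kernel of the exchange matrix of a triangulation of $\Sigma$ is an integral combination of (i) peripheral-cycle vectors, which give the $\beta_{D}$, and (ii) vectors of the form $n \cdot \be_{i}$, which account for $T_{n}(\alpha)$, the twisted Chebyshev on punctured arcs, and $\beta^{n}$ on boundary arcs. This is a combinatorial/linear-algebra computation for the signed adjacency matrix of an ideal triangulation with interior punctures, and the presence of punctures (where $\cA_{q}(\Sigma)$ itself is not well-defined) is where extra care is needed; one must verify that the twist by $\sqrt{v}\sqrt{w}$ recovers precisely the central torus monomials associated to arcs meeting a puncture, with no further exotic central elements arising. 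Once the kernel is classified, matching its Laurent monomials back to the four skein-theoretic families of the statement completes the proof, and as a corollary identifies the rank of $\cS_{q}^{\MRY}(\Sigma)$ over its center, giving the almost-Azumaya conclusion announced in the abstract.
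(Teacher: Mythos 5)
Your centrality half is sound in outline and roughly parallels the paper's Section~3, though the paper proves the four families central by more hands-on means: threading via the Chebyshev--Frobenius map of Bloomquist--L\^e for loops and boundary-marked arcs, the height-exchange relation (E) for $\beta_D$, and explicit skein computations (adapting Karuo's earlier work) for arcs meeting interior punctures. Your observation that $Z(\cS_q^{\MRY}(\Sigma)) = Z(\TT_\Delta)\cap\cS_q^{\MRY}(\Sigma)$ is in fact correct and non-vacuous, precisely because $\TT^{+}(P)\subset\cS_q^{\MRY}(\Sigma)\subset\TT(P)$: a central element of $\cS_q^{\MRY}$ commutes with all the positive torus generators and hence with their inverses, so it is central in $\TT(P)$; this is closely related to the paper's Lemma~\ref{lem:centerbeforelocalization}. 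So the strategy is a legitimately different route, not a variant of the paper's.

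The gap is in the last step, which you describe as ``a combinatorial/linear-algebra computation'' of the kernel of the exchange matrix. This understates the difficulty in two ways. First, $Z(\TT(P))$ at odd $n$ is spanned by the monomials $x^{\mathbf k}$ with $P\mathbf k\equiv 0\pmod n$, but you then need to describe $Z(\TT(P))\cap\cS_q^{\MRY}(\Sigma)$, not just $Z(\TT(P))$. A skein element written in the quantum torus is generally a sum of many monomials, and deciding which linear combinations of central torus monomials come from $\cS_q^{\MRY}(\Sigma)$ requires controlling the change of basis between reduced multicurves and torus monomials. This is exactly where the paper's real technical content lives: the ``square trick'' and the generalized edge coordinate $f:\RMC\to\tfrac12\ZZ^E$ (Section~\ref{ssec:coordinates}), which give a total order compatible with multiplication and make a leading-term elimination possible. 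Second, the matrix $P_I$ used in the paper is not the cluster exchange matrix (which is not even defined when there are punctures); it includes the formal vertex generators $v_i$ with $P_I(v_i,\cdot)=0$, and the arcs-through-punctures contribute half-integer edge coordinates that are repaired by the $\sqrt v\sqrt w$ twist. Your proposal gestures at this but does not explain how the twisted Chebyshev $\tfrac{1}{\sqrt v\sqrt w}T_n(\sqrt v\sqrt w\,\beta)$ emerges from the kernel classification. The paper sidesteps the full kernel computation by passing to $\deg_V$-homogeneous pieces, reducing the degree-zero part to Korinman's theorem for $\overline{\cS}_q^{\rL+}(\Sigma)$, and then handling the arc classes by the square trick on leading terms. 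To make your route rigorous you would have to (i) prove the kernel classification you assert (peripheral cycles plus $n\ZZ^E$ plus the $v_i$ directions), and (ii) show that no exotic linear combination of central torus monomials lies in $\cS_q^{\MRY}(\Sigma)$ outside the subalgebra generated by (1)--(4); (ii) is the genuinely hard part and is not addressed.
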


As we will explain in Remark \ref{rmk:Tnbetaisinthealgegbra},  the element $\frac{1}{\sqrt{v}\sqrt{w}}T_{n}(\sqrt{v}\sqrt{w}\beta)$ in Item (2) of Theorem~\ref{thm:mainthm} is an element of $\cS_{q}^{\MRY}(\Sigma)$, although $\sqrt{v}$ itself is  not  in $\cS_{q}^{\MRY}(\Sigma)$.    
 Also note that   for simplicity we have described the central elements here using the multiplication operation of the skein algebra.    
The central elements can also be described using a threading operation that is ubiquitous in quantum topology.
We will provide more details in Section~\ref{sec:central}.    

As special cases, $\cS_{q}^{\MRY}(\Sigma)$ reduces to the Muller skein algebra $\cS_{q}^{\mathrm{M}}(\Sigma)$  in the absence of interior punctures, and to the Roger-Yang skein algebra $\cS_{q}^{\mathrm{RY}}(\Sigma)$ in the absence of boundary marked points.  Our result agrees with Korinman's  computation of center for the Muller skein algebra \cite{Kor21}.  In the case of the Roger-Yang skein algebra, we have the following result.

\begin{corollary}\label{cor:centerRY}
Let $\Sigma$ be a punctured surface without a boundary. The center of the Roger--Yang skein algebra $\cS_{q}^{\mathrm{RY}}(\Sigma)$ is the $\CC[v_{i}^{\pm}]$-subalgebra generated by the following elements. 
\begin{enumerate}
\item For a loop class $\alpha$ that has a diagram with no self-intersections, $T_{n}(\alpha)$;
\item For an arc class $\beta$ connecting two distinct interior punctures $v$ and $w$ and has a diagram with no self-intersections, $\frac{1}{\sqrt{v}\sqrt{w}}T_{n}(\sqrt{v}\sqrt{w}\beta)$.
\end{enumerate}
\end{corollary}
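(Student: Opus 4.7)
The plan is to derive the corollary directly as the specialization of Theorem~\ref{thm:mainthm} to the setting of a surface with no boundary. As noted in the discussion preceding the corollary, when $\partial\Sigma = \emptyset$ the Muller-Roger-Yang skein algebra $\cS_{q}^{\MRY}(\Sigma)$ coincides with the Roger-Yang skein algebra $\cS_{q}^{\mathrm{RY}}(\Sigma)$, and in particular their centers agree. So it suffices to examine which of the four families of generators in Theorem~\ref{thm:mainthm} persist under this hypothesis.

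Under $\partial\Sigma = \emptyset$, items (3) and (4) in the generating list of Theorem~\ref{thm:mainthm} become vacuous. Item (3) demands an arc class with an endpoint at a boundary marked point, and item (4) demands a boundary component $D$ that decomposes as a cyclic concatenation of boundary arcs between marked points. Since no boundary exists, no generators of either type arise. The surviving generators are exactly items (1), the elements $T_{n}(\alpha)$ for simple loop classes $\alpha$, and item (2), the elements $\frac{1}{\sqrt{v}\sqrt{w}}T_{n}(\sqrt{v}\sqrt{w}\beta)$ for simple arc classes $\beta$ between distinct interior punctures $v,w$. This matches the statement of the corollary.

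I do not anticipate any genuine obstacle beyond invoking the main theorem. The only point that warrants a quick check is that items (1) and (2) continue to make sense on a punctured surface without boundary: simple loops and simple arcs between distinct interior punctures exist in abundance, and the $\CC[v_{i}^{\pm}]$-algebra structure with invertible puncture parameters is unchanged in the absence of boundary. The element in item (2) still lies in $\cS_{q}^{\mathrm{RY}}(\Sigma)$ by the observation to be made in Remark~\ref{rmk:Tnbetaisinthealgegbra}. Thus the corollary is a clean restriction of the main theorem, and the substantive work lies entirely in the proof of Theorem~\ref{thm:mainthm} itself.
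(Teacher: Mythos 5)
Your proposal is correct and is essentially the paper's own (implicit) argument: the corollary is stated as a direct specialization of Theorem~\ref{thm:mainthm}, with no separate proof given, relying precisely on the identification $\cS_{q}^{\MRY}(\Sigma)=\cS_{q}^{\mathrm{RY}}(\Sigma)$ from Remark~\ref{rem:RY} and the observation that items~(3) and~(4) are vacuous when $\partial\Sigma=\emptyset$. The quick sanity check you note---that item~(2) lies in $\cS_{q}^{\mathrm{RY}}(\Sigma)$ via Remark~\ref{rmk:Tnbetaisinthealgegbra}---is exactly the right point to flag.
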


The delicate part of  the proof  of Theorem \ref{thm:mainthm} involves the algebraic behavior of arcs with endpoints at interior punctures.  It relies on the observation that $\cS_{q}^{\MRY}(\Sigma)$ behaves like a `quadratic extension algebra' of the usual skein algebra. For any arc class $\beta$ joining two interior punctures $v$ and $w$, applying the puncture-skein relation twice at $v$ and $w$, one can check that $vw\beta^{2}$ is a linear combination of loops.  We call it the `square trick,' and note that there are similar results for other types of arc classes.  We will show, in Section \ref{sec:centercomputation}, that many standard techniques for the usual skein algebra, including the edge coordinates of the curve class with respect to an ideal triangulation $\Delta$ on $\Sigma$, a basis with total lexicographical order, and elimination of leading term of the given central element \cite{FKBL19, Kor21, Yu23} work very well for $\cS_{q}^{\MRY}(\Sigma)$ via the square trick. 

We also capitalize on relationships between $\cS_{q}^{\MRY}(\Sigma)$ and other generalizations of the skein algebra, specifically {\sl stated skein algebras} that involve arcs with assignment of $\pm$ at the endpoints at boundary marked points \cite{Le18, BKL24}.   These various skein algebras are defined  and their bases are described in Section \ref{sec:MRY}.  In Section \ref{sec:central}, we show that the elements in the statement of Theorem \ref{thm:mainthm} are central.  In Section \ref{ssec:coordinates}, we apply the square trick to generalize techniques for the usual skein algebra to prove they generate the center.  

While our characterization of the center focuses on the case of $n$ odd, we expect that an analogous theorem can be proven for $n$ even using a similar line of proof, though possibly with a different generating set.

\subsection{Representation theory of $\cS_{q}^{\MRY}(\Sigma)$}\label{ssec:represenation}
For the usual skein algebra generated only by loops, the representation theory is closely connected to both topological quantum field theory \cite{BHMV95} and character varieties \cite{Bul97, PS00}; see e.g. \cite{BW16, FKBL19, GJS24, KK22} for more details. 
Understanding the algebraic structure of  the center of $\cS_{q}^{\MRY}(\Sigma)$ is a crucial step towards classifying the finite-dimensional representations of $\cS_{q}^{\MRY}(\Sigma)$.   In particular, using Theorem \ref{thm:mainthm}, we show that $\cS_{q}^{\MRY}(\Sigma)$ is an almost Azumaya algebra when $\Sigma$ has at last one marked point on the boundary (Proposition \ref{prop:almostAzumaya}).

Recall that when a given algebra $A$ is an \emph{Azumaya algebra} and its center $Z(A)$ is finitely generated, the representation theory is particularly nice.  There is a bijection between the set of isomorphism classes of irreducible representations of $A$ and $\mathrm{MaxSpec}\; Z(A)$.  An algebra $A$ is called \emph{almost Azumaya} if, after taking a localization by $c \in Z(A)$, it becomes an Azumaya algebra. In that case,  there is an injective map from the Azumaya locus  $\mathrm{MaxSpec}\; Z(A)_{c}$ to the set of finite dimensional irreducible representations of $A$.

Thus, because $\cS_{q}^{\MRY}(\Sigma)$ is almost Azumaya,  there is a large class of irreducible representations that are completely determined by  points in the Azumaya locus.   It would be an interesting question to understand the hyperbolic geometric content of the irreducible representations and their connection to  representations of quantum cluster algebras in the view of (almost) Azumaya algebras \cite{MNTY24}.   More discussion about the proof of that $\cS_{q}^{\MRY}(\Sigma)$ is almost Azumaya and some representation theoretic consequences can be found in Section \ref{sec:almostAzumaya}.

\subsection*{Notation and convention}
We use $\CC$ as the coefficient ring. The parameter $q \in \CC$ is a primitive $n$-th root of unity, where $n \ge 3$ is a positive odd integer.

\subsection*{Acknowledgements}
 We are grateful to the  American Institute of Mathematics, whose Quantum Invariants and Low-Dimensional Topology workshop put together this collaboration.   We also thank Thang T. Q. L\^e for pointing out an alternative proof strategy (see Remark \ref{rmk:approachviaorderlyfinitegeneration}) 
and Francis Bonahon for feedback on an earlier draft.  We especially thank the anonymous referee for a careful reading of the manuscript and their helpful suggestions. 

HK was supported by JSPS KAKENHI Grant Number JP23K12976. HW was partially supported by   
DMS-2305414 from the US National Science Foundation.

\section{Muller-Roger-Yang skein algebra}\label{sec:MRY}
In this section, we fix some notation and define several versions of generalized skein algebras which include arc classes over a surface with a boundary.

\subsection{Marked surfaces and tangles}
A \emph{marked surface} $\Sigma = (\underline{\Sigma}, V)$ consists of a compact surface $\underline{\Sigma}$ with (possibly empty) boundary and a finite set of points $V \subset \underline{\Sigma}$. In the literature, a marked surface is also referred to as a punctured bordered surface. 
 In the following, we will use $\partial \Sigma:= \partial \underline{\Sigma}$ and $\Int \Sigma :=  \Int \underline{\Sigma} \setminus V$.
The set $V$ is decomposed into $V = V_{\circ} \sqcup V_{\partial}$, where $V_{\partial} = V \cap \partial \Sigma$ and $V_{\circ} = V \setminus V_{\partial}$. $V_{\partial}$ is called the set of \emph{(boundary) marked points} and $V_{\circ}$ is the set of \emph{(interior) punctures}. Each connected component of $\partial\Sigma \setminus V_\partial$ is called a \emph{boundary edge}. If the context is clear, we will not distinguish between $\Sigma$ and $\underline{\Sigma}$.

A \emph{$V$-tangle} is an embedded unoriented $1$-dimensional compact submanifold $\alpha$ of $\Sigma\times (-1,1)$ equipped with a framing (a choice of nowhere vanishing section of the normal bundle) such that

\begin{enumerate}
\item $\partial \alpha \subset V \times (-1,1)$, 
\item $\Int \alpha \subset \Int \Sigma\times (-1,1)$.
\end{enumerate}

For $(x,t)\in \Sigma\times (-1,1)$, the \emph{height} of $(x,t)$ is $t$. 

A \emph{$\partial$-tangle} is an embedded unoriented $1$-dimensional compact submanifold $\alpha$ of $\Sigma\times (-1,1)$ equipped with a framing such that \begin{enumerate}
\item $\partial \alpha \subset ((\partial\Sigma \setminus V_{\partial})\cup V_{\circ}) \times (-1,1)$, 
\item for each boundary edge $e$, the elements of $\partial \alpha\cap e\times (-1,1)$ have distinct heights,
\item $\Int \alpha \subset \Int \Sigma\times (-1,1)$.
\end{enumerate}

Note that when a component of a $V$-tangle (resp. $\partial$-tangle) $\alpha$ meets $\partial \Sigma$, it hits (resp. avoids) the boundary marked points $V_{\partial}$. We call both $V$-tangles and $\partial$-tangles \emph{tangles}. 

Two $V$-tangles are \emph{isotopic} if they are isotopic in the class of $V$-tangles. We say the framing of a one-dimensional submanifold $\alpha$ of $\Sigma\times (-1,1)$ is \emph{vertical} if at each point of $\partial \alpha$, the normal vector defined from the framing points towards the direction of $1$. Every $V$-tangle is isotopic to a $V$-tangle with a vertical framing. A $V$-tangle in $\Sigma\times(-1,1)$ with a vertical framing is in a \emph{general position} if its image by the projection $\Sigma\times(-1,1)\to \Sigma\times\{0\}$ has only transversal multiple points as singular points, especially there are only transversal double points in $\Sigma\setminus (V \cup \partial \Sigma)$. For the image of a $V$-tangle in general position under projection, we assign over/under crossing information to each singular point with respect to the heights. 
The resulting diagram in $\Sigma = (\underline{\Sigma},V)$ is called a \emph{$V$-tangle diagram} of the $V$-tangle. 

We may also define the \emph{isotopy} of $\partial$-tangles, vertical $\partial$-tangles, and $\partial$-tangles in \emph{a general position} in a similar way. In the case of $\partial$-tangle diagrams, we assign the total order to the endpoints on each boundary edge according to their heights. We employ the following convention for $\partial$-tangle diagrams: If an orientation is assigned to a boundary edge in a picture, the heights of the drawn arcs incident to the edge are assumed to be increasing with respect to the orientation. However, the heights do not always increase if a boundary orientation is not drawn. 

\subsection{Skein algebras}\label{ssec:skeinalgebra}
For each interior puncture $v_{i} \in V_{\circ}$, we set a formal variable (using the same notation $v_{i}$), which commutes with everything. From now on, we use the Laurent polynomial ring $\CC[v_{i}^{\pm}]$ generated by all $v_{i}^{\pm} \in V_{\circ}$ as the coefficient ring. 

\begin{definition}[The Muller--Roger--Yang skein algebra \cite{BKL24}]\label{def:MRY}
We fix $q \in \CC^{*}$. The \emph{Muller--Roger--Yang skein algebra} of a marked surface $(\Sigma,V)$, denoted by $\cS_{q}^{\MRY}(\Sigma)$, is the $\CC[v_{i}^{\pm}]$-algebra generated by all isotopy classes of $V$-tangles in $\Sigma\times (-1,1)$, subject to 
\begin{align*}
&({\rm A})\begin{array}{c}\includegraphics[scale=0.15]{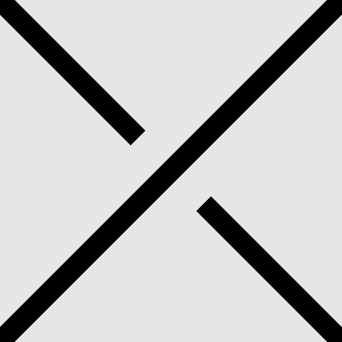}\end{array}
=q\begin{array}{c}\includegraphics[scale=0.15]{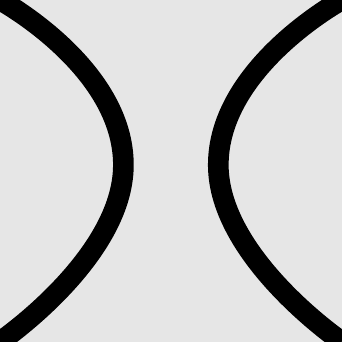}\end{array}
+q^{-1}\begin{array}{c}\includegraphics[scale=0.15]{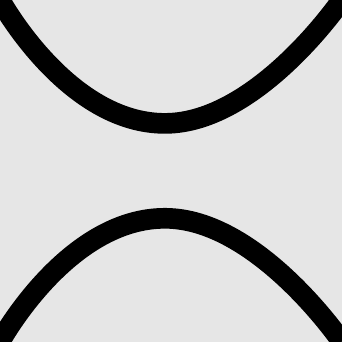}\end{array},
\\
&({\rm B})\begin{array}{c}\includegraphics[scale=0.15]{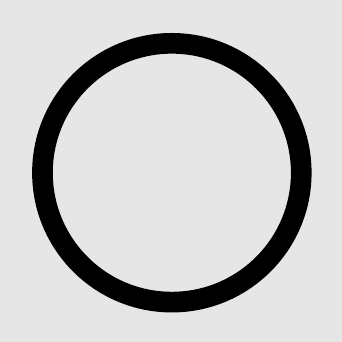}\end{array}
=(-q^2-q^{-2})\begin{array}{c}\includegraphics[scale=0.15]{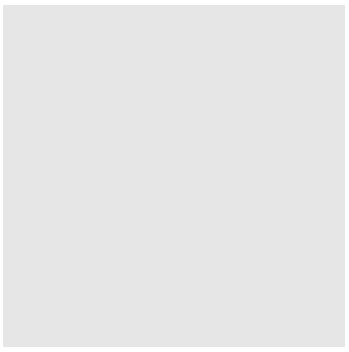}\end{array}, \\
&({\rm C}) \begin{array}{c}\includegraphics[scale=0.18]{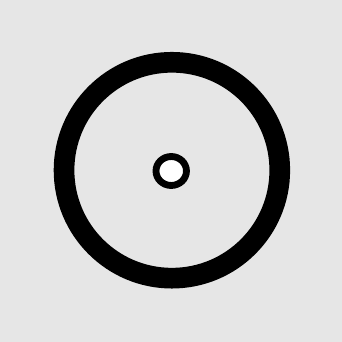}\end{array}=(q+q^{-1})\begin{array}{c}\includegraphics[scale=0.18]{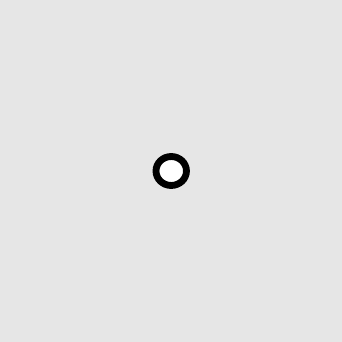}\end{array},\\
&({\rm D}) \begin{array}{c}\includegraphics[scale=0.18]{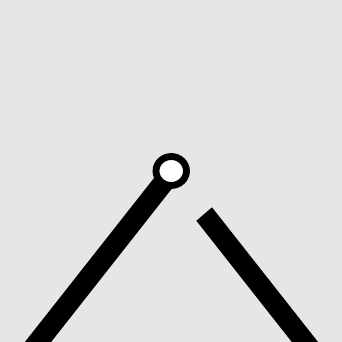}\end{array}=v^{-1}\Big(q^{1/2}\begin{array}{c}\includegraphics[scale=0.18]{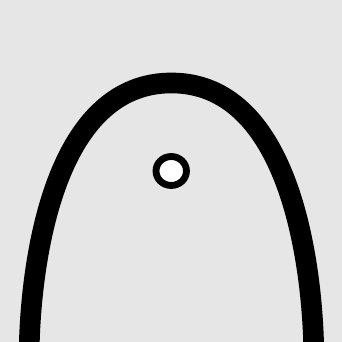}\end{array}+q^{-1/2}\begin{array}{c}\includegraphics[scale=0.18]{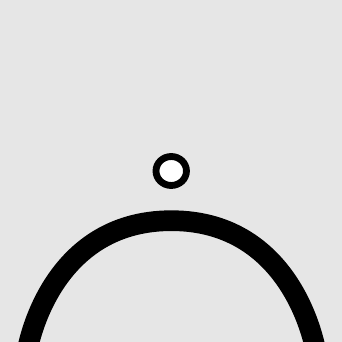}\end{array}\Big)\quad
\text{around an interior puncture $v$,}\\
&({\rm E})\ q^{-1/2}\begin{array}{c}\includegraphics[scale=0.15]{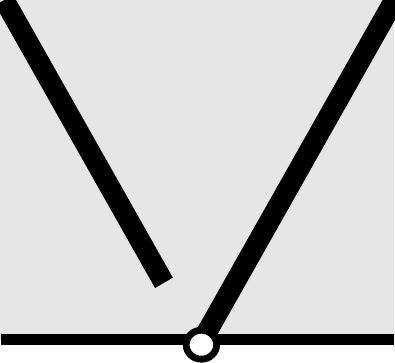}\end{array}
=q^{1/2}\begin{array}{c}\includegraphics[scale=0.15]{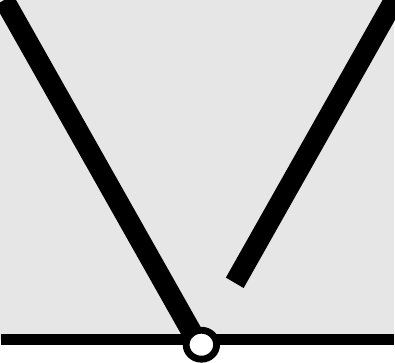}\end{array},
\\
&({\rm F})\ \begin{array}{c}\includegraphics[scale=0.15]{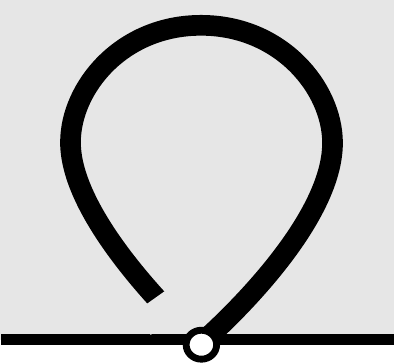}\end{array}=0=\begin{array}{c}\includegraphics[scale=0.15]{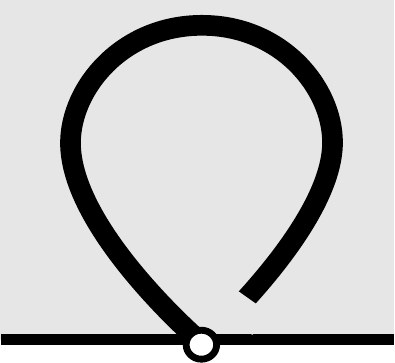}\end{array}.
\end{align*}
with a multiplication defined by stacking with respect to $(-1,1)$. 
\end{definition}

In each local relation (A)--(F), the shaded parts stand for the same region of $\Sigma$. The horizontal segments in (E) and (F), are parts of $\partial \Sigma$, and the white vertices in the pictures denote punctures or boundary marked points. In each relation, the curves except the horizontal or vertical segments are parts of $V$-tangle diagrams, which are assumed to be the same outside the shaded region. 

\begin{definition}
Let $\cS_q^\rM(\Sigma)$ be a $\CC$-subalgebra of $\cS_q^\MRY(\Sigma)$ generated by isotopy classes of $V$-tangles whose components  meet only the marked points of  $V_{\partial}$ (and do not meet any interior puncture $V_{\circ}$). 
\end{definition}

It is well known that any $V$-tangle diagram can recover a $V$-tangle and its isotopy class does not change under Reidemeister moves drawn in Figure \ref{pic:Reidemeister}; see \cite{Kau87, RY14, Mul16, BKL24}.

\begin{figure}[ht]\centering\includegraphics[width=350pt]{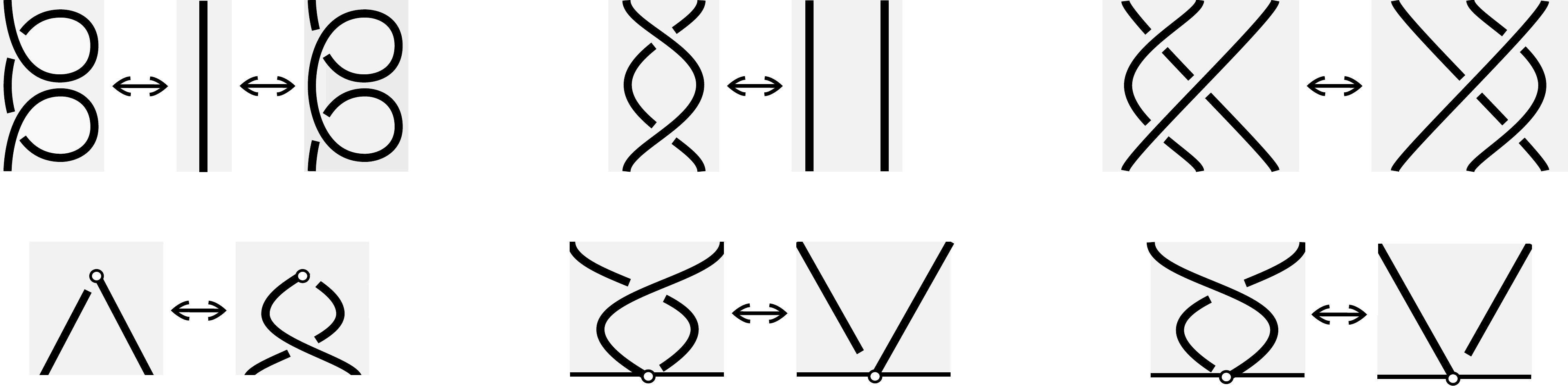}\caption{Reidemeister moves  }\label{pic:Reidemeister}\end{figure}

A \emph{state} of a $\partial$-tangle $\alpha$ is a map $s\colon \partial \alpha\cap (V_{\partial}\times (-1,1)) \to \{+,-\}$ and a \emph{stated $\partial$-tangle} is a pair $(\alpha,s)$. For simplicity, we abbreviate it to $\alpha$ if there is no confusion. Similarly, a \emph{stated $\partial$-tangle diagram} is a $\partial$-tangle diagram $\gamma$ equipped with a state $s\colon \partial \gamma\to \{+,-\}$. 

\begin{definition}[The L\^e--Roger--Yang skein algebra \cite{BKL24}]
The \emph{L\^e--Roger--Yang skein algebra} of a marked surface $(\Sigma,V)$, denoted by $\cS_{q}^{\LRY}(\Sigma)$, is the $\CC[v_{i}^{\pm}]$-algebra generated by all ambient isotopy classes of stated tangles in $\Sigma\times (-1,1)$, subject to the relations (A)--(D), (E') and (F') with a multiplication defined by stacking with respect to $(-1,1)$. 
\begin{align*}
&({\rm E}') \begin{array}{c}\includegraphics[scale=0.15]{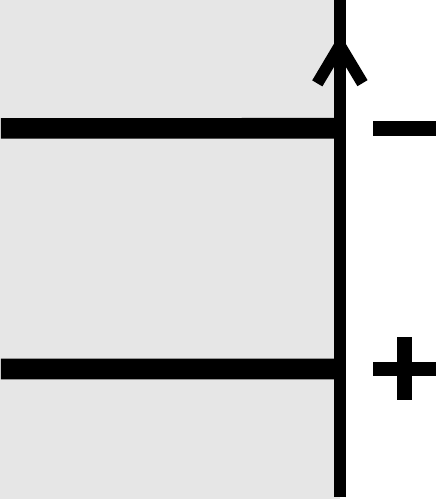}\end{array}=q^2\begin{array}{c}\includegraphics[scale=0.15]{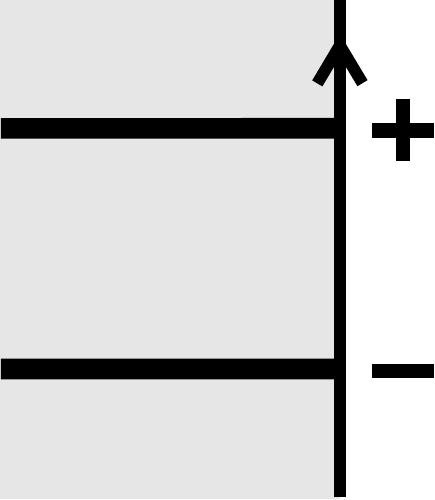}\end{array}+q^{-1/2}\begin{array}{c}\includegraphics[scale=0.15]{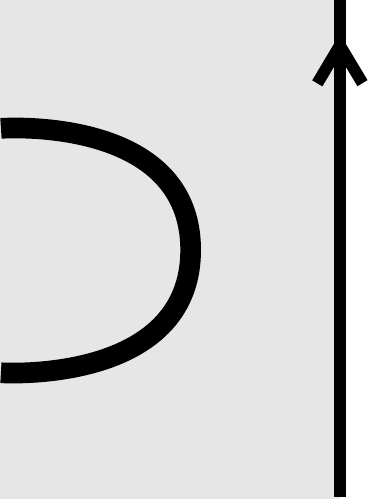}\end{array},\\
&({\rm F}')\ \begin{array}{c}\includegraphics[scale=0.15]{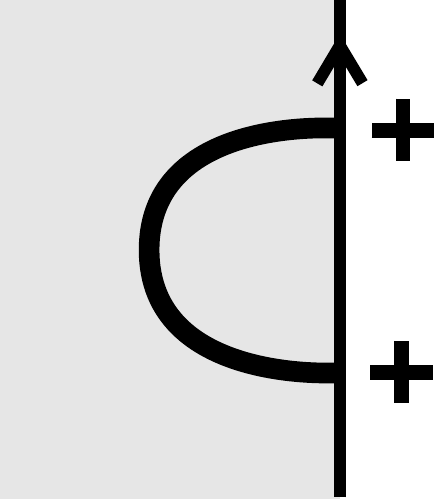}\end{array}=\begin{array}{c}\includegraphics[scale=0.15]{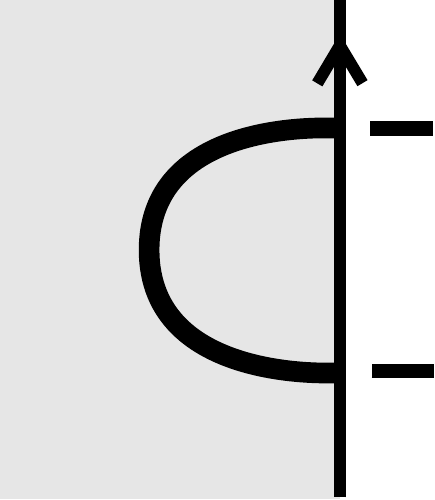}\end{array}=0 \qquad \mbox{ and } 
\qquad \begin{array}{c}\includegraphics[scale=0.15]{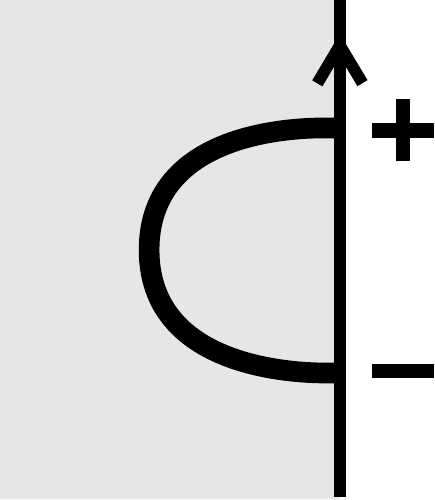}\end{array}=q^{-1/2}\begin{array}{c}\includegraphics[scale=0.15]{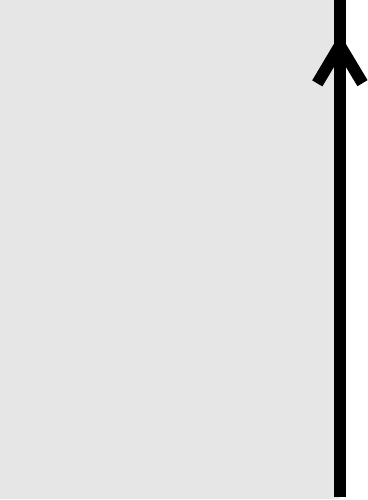}\end{array}
\end{align*}
\end{definition}

\begin{definition}
Let $\cS_q^\rL(\Sigma)$ be a $\CC$-subalgebra of $\cS_{q}^{\LRY}(\Sigma)$ generated by isotopy classes of $\partial$-tangles whose components do not end at any interior puncture $v_{i} \in V_{\circ}$. 
\end{definition}

 While similar, the algebras $\cS_q^\rM(\Sigma)$ and $\cS_q^\rL(\Sigma)$
are different than those introduced in \cite{Mul16} and \cite{Le18}.  In order to explain how, 
%we define the following variations of $\cS_{q}^{\rM}(\Sigma)$ and $\cS_{q}^{\rL}(\Sigma)$. 
consider the $\CC$-algebra $\cS_{q}^{\rM+}(\Sigma)$  generated by all ambient isotopy classes of $V$-tangles in $\Sigma\times (-1,1)$ without components incident to interior punctures, subject to the all relations except (C) with a multiplication defined by stacking with respect to $(-1,1)$.  We similarly have $\cS_{q}^{\rL+}(\Sigma)$  for  stated $\partial$-tangles.
Note that the algebra $\cS_{q}^{\rM+}(\Sigma)$ was originally defined in \cite{Mul16} and is called the \emph{Muller skein algebra}.   
There is a natural epimorphism 
$ \pi: \cS_{q}^{\rM+}(\Sigma) \to \cS_{q}^{\rM}(\Sigma)$, 
so that the kernel is generated by $\ell_{v} - (q+q^{-1})$, where $\ell_{v}$ is the peripheral loop around $v$ that appears in the left diagram in the relation (C).  The algebra $\cS_{q}^{\rL+}(\Sigma)$ was originally defined in \cite{Le18}, called the \emph{stated skein algebra}, and there is a similar map  $ \pi: \cS_{q}^{\rL+}(\Sigma) \to \cS_{q}^{\rL}(\Sigma)$ as well.

\begin{remark}\label{rem:RY}
If $\Sigma$ has no interior punctures, then $\cS_{q}^{\MRY}(\Sigma) = \cS_q^\rM(\Sigma) = \cS_q^{\rM+}(\Sigma)$ agrees with the definition of the Muller skein algebra, which was originally introduced in \cite{Mul16}. 
When $\Sigma$ has no boundary, then  $\cS_{q}^{\MRY}(\Sigma)$ is specialized to the Roger-Yang skein algebra $\cS_{q}^{\mathrm{RY}}(\Sigma)$ introduced in \cite{RY14}.
\end{remark}

Recall that, in $\cS_{q}^{\MRY}(\Sigma)$, the height of the endpoints of a $V$-tangle diagram at each boundary puncture are distinct. Now we use the orientation of the boundary edges induced from that of $\Sigma$ to  move the endpoints along the boundary edge in that direction so that a higher end goes farther. As a result, we obtain what we call a positively ordered tangle diagram. We call the operation the \emph{moving trick} and it is depicted in Figure \ref{fig:movingtrick}.
\begin{figure}
$\begin{array}{c}\includegraphics[scale=0.18]{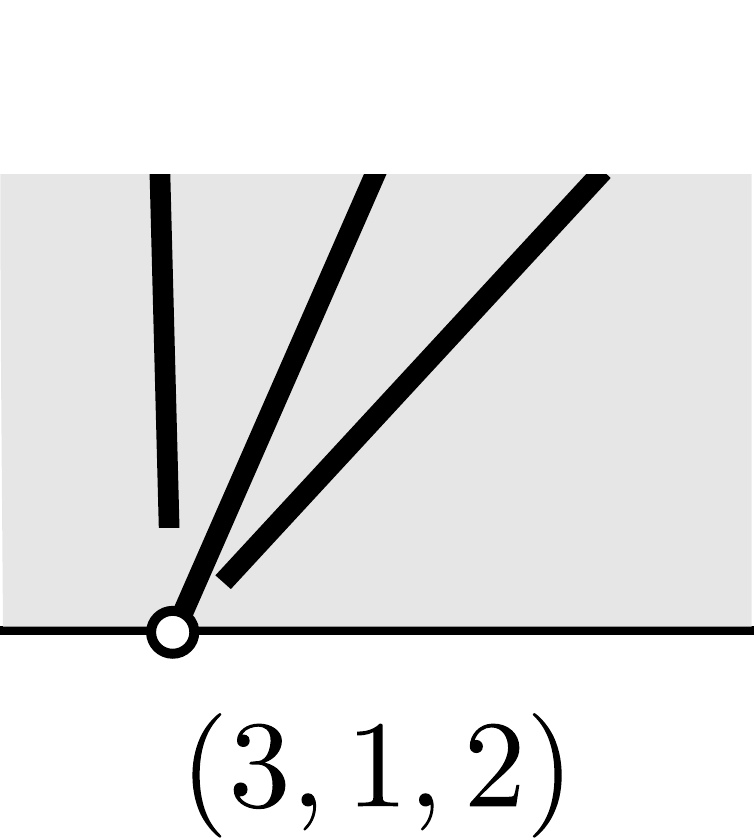}\end{array}\longleftrightarrow\begin{array}{c}\includegraphics[scale=0.18]{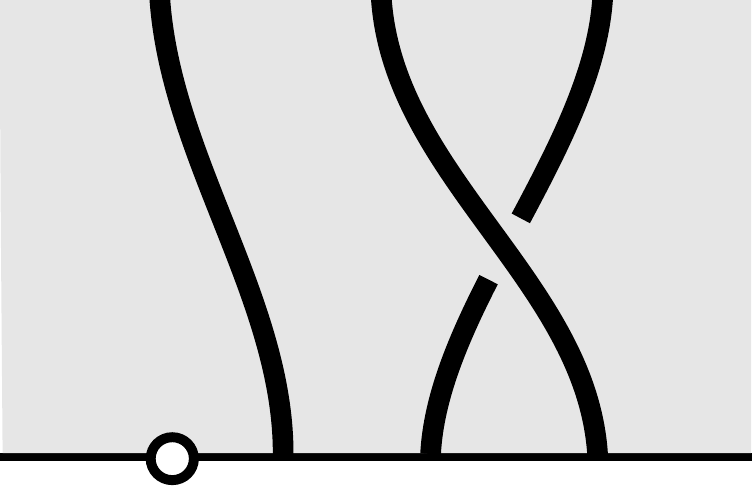}\end{array}$
\caption{An example of the moving trick. The left $V$-tangle diagram does not have the preferred crossing because the right tangle diagram has an extra crossing. The sequence under the $V$-tangle diagram denotes the height order of three strands ($1$ is the highest end.). }
\label{fig:movingtrick}
\end{figure}
We may then assign $+$ states to the endpoints of the resulting $\partial$-tangle diagram so that it can be seen as an element of $\cS_{q}^{\LRY}(\Sigma)$. As we will see later, with the bases given in Sections~\ref{ssec:LRYbases} and \ref{ssec:MRYbases}, it is clear that this operation is a well-defined injective map; see Proposition~\ref{prop:MRYvsLRY}. If we have a positively stated diagram, we may uniquely recover a corresponding $V$-tangle diagram. This moving trick map preserves the stacking structure.

We thus have shown that there is a well-defined algebra homomorphism 
\begin{equation}\label{eqn:movingmap}
	m : \cS_{q}^{\MRY}(\Sigma) \to \cS_{q}^{\LRY}(\Sigma).
\end{equation}
The morphism $m$ induces well-defined morphisms (we retain the same notation) $m : \cS_{q}^{\rM}(\Sigma) \to \cS_{q}^{\rL}(\Sigma)$ and $m : \cS_{q}^{\rM+}(\Sigma) \to \cS_{q}^{\rL+}(\Sigma)$. 

We introduce one additional generalization of skein algebra. A \emph{corner arc} is an $\partial$-tangle whose diagram consists of only one arc on $\Sigma$ encircling one boundary marked point as in Figure \ref{fig:badarc}.  A \emph{bad arc} is a corner arc with $+$ and $-$ states at two endpoints in clockwise with respect to the encircled marked point. 
\begin{figure}
\includegraphics[width=0.11\textwidth]{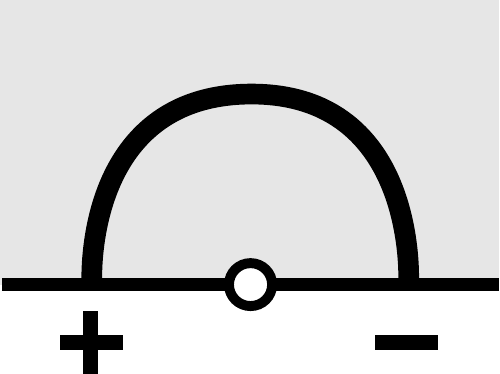}
\caption{A bad arc}
\label{fig:badarc}
\end{figure} 
Let $\overline{\cS}_{q}^{\LRY}(\Sigma)$ be the quotient algebra of $\cS_{q}^{\LRY}(\Sigma)$ modulo the ideal generated by bad arcs, and let  $p$ be its quotient map. This algebra is called the \emph{reduced LRY skein algebra}. Note that a bad arc does not meet any interior puncture, so it can be regarded as an element of $\cS_{q}^{\rL}(\Sigma)$ and $\cS_{q}^{\rL+}(\Sigma)$ as well. Thus, we may define $\overline{\cS}_{q}^{\rL}
(\Sigma)$ and $\overline{\cS}_{q}^{\rL+}(\Sigma)$ in the same way. 

A \emph{boundary arc} is an arc that is homotopic to a boundary edge relative to the set of marked points $V$. Let $\cS_{q}^{\MRY}(\Sigma)[\partial^{-1}]$ be the left Ore localization of $\cS_{q}(\Sigma)$ by the multiplicative set generated by boundary arc components. In our accompanying paper \cite[Proposition 3.1]{KMW25+}, we show that there is a natural isomorphism 
\[
	\cS_{q}^{\MRY}(\Sigma)[\partial^{-1}] \cong \overline{\cS}_{q}^{\LRY}(\Sigma).
\]
The same proof shows two more isomorphisms $\cS_{q}^{\rM}(\Sigma)[\partial^{-1}] \cong \overline{\cS}_{q}^{\rL}(\Sigma)$ and $\cS_{q}^{\rM+}(\Sigma)[\partial^{-1}] \cong \overline{\cS}_{q}^{\rL+}(\Sigma)$ which are all compatible. The last isomorphism is proved in \cite{LY22}.

In summary, we have the following commutative diagram. The maps $\pi$ and $p$ are epimorphisms, and $i$ and $m$ are monomorphisms.

\begin{equation}\label{eqn:skeinalgebradiagram}
\xymatrix{\cS_{q}^{\rM+}(\Sigma) \ar^{\pi}[r] \ar^{m}[d] &\cS_{q}^{\rM}(\Sigma) \ar^{m}[d] \ar^{i}[r] & \cS_{q}^{\MRY}(\Sigma) \ar^{m}[d]\\
\cS_{q}^{\rL+}(\Sigma) \ar^{\pi}[r] \ar^{p}[d] &\cS_{q}^{\rL}(\Sigma) \ar^{i}[r] \ar^{p}[d] & \cS_{q}^{\LRY}(\Sigma) \ar^{p}[d]\\
\overline{\cS}_{q}^{\rL+}(\Sigma) \ar^{\pi}[r] \ar@{=}[d]& \overline{\cS}_{q}^{\rL}(\Sigma) \ar^{i}[r] \ar@{=}[d] & \overline{\cS}_{q}^{\LRY}(\Sigma) \ar@{=}[d]\\
\cS_{q}^{\rM+}(\Sigma)[\partial^{-1}] \ar[r]&\cS_{q}^{\rM}(\Sigma)[\partial^{-1}] \ar[r]&\cS_{q}^{\MRY}(\Sigma)[\partial^{-1}]}
\end{equation}

\begin{remark}\label{rem:whyLRY}
Our primary interest in this paper is $\cS_{q}^{\MRY}(\Sigma)$. But the use of $\cS_{q}^{\LRY}(\Sigma)$ is useful, particularly because of a possibility of using edge/corner coordinates for each ideal triangulation $\Delta$ of $\Sigma$.  
\end{remark}

\subsection{Bases of LRY skein algebras}\label{ssec:LRYbases}
Let $\mathfrak{o}$ be an orientation of $\partial \Sigma$. 
If $\mathfrak{o}$ is the orientation induced from $\Sigma$, then $\mathfrak{o}$ is called the {\em positive order}.

A $\partial$-tangle diagram is \emph{simple} if it has neither double points in $\Sigma$ nor connected components bounding an embedded disk or a one-punctured disk or homotopic to a part of a boundary interval relative to $\partial 
\Sigma$. Given an orientation $\mathfrak{o}$ of $\partial \Sigma$, a simple tangle diagram $\alpha$ is {\em $\mathfrak{o}$-ordered} if the partial order on $\partial\alpha$ with respect to the height is increasing along each boundary interval in the direction of $\mathfrak{o}$. Note that every tangle diagram can be presented by an $\mathfrak{o}$-ordered tangle diagram by isotopy. 
In particular, if $\mathfrak{o}$ is the positive order, we call an $\mathfrak{o}$-ordered tangle diagram a positively ordered tangle diagram.

We consider an order on the set $\{+,-\}$ so that $+$ is greater than $-$. 
For a stated tangle diagram $\alpha$, 
a state $s\colon \partial \alpha\to \{+,-\}$ is {\em increasing} if $s(x)\geq s(y)$
for any $x,y\in \partial \alpha$ such that the height of $x$ is greater than that of $y$.

Let $\mathsf{B}(\Sigma,V)$ be the set of isotopy classes of increasingly stated, positively ordered simple tangle diagrams in $(\Sigma,V)$.

\begin{theorem}[{\cite[Theorem 3.6]{BKL24}}]
For a marked surface $(\Sigma,V)$, $\cS_{q}^{\LRY}(\Sigma)$ is a free $\CC[v_{i}^{\pm}]$-module generated by $\mathsf{B}(\Sigma,V)$.
\end{theorem}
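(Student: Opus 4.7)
The claim has two parts: spanning of $\cS_q^{\LRY}(\Sigma)$ by $\mathsf{B}(\Sigma,V)$ over $\CC[v_i^\pm]$, and linear independence of $\mathsf{B}(\Sigma,V)$ over the same ring. I outline how I would attack each.

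For the spanning part, the plan is to start from an arbitrary stated $\partial$-tangle $\alpha$ in general position and apply the defining relations in a controlled order until only increasingly stated, positively ordered simple diagrams remain. Relations (A) and (B) resolve interior crossings and kill contractible unpunctured loops in the standard Kauffman bracket manner. Relation (C) rewrites each peripheral loop around an interior puncture as a scalar, and relation (D) resolves a crossing located at an interior puncture $v$ at the cost of a factor $v^{-1}$, which is absorbed into the coefficient ring. Relations (F$'$) and (E$'$) then handle the boundary behavior: (F$'$) reduces corner-arc state configurations to the allowed ones, while (E$'$) is used to permute heights at each boundary marked point and to enforce the increasing-state condition. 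The moving trick finally yields positive order along each boundary edge. I would organize these reductions by induction on a complexity triple such as (number of interior double points, number of corner arcs, total height-inversion count at boundary marked points), and verify that each reduction strictly decreases the triple modulo lower-complexity terms.

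For linear independence, the natural strategy is to embed $\cS_q^{\LRY}(\Sigma)$ into a quantum torus attached to an ideal triangulation. Fix an ideal triangulation $\Delta$ of $\Sigma$ and construct a quantum-trace-type homomorphism
\[
\mathrm{tr}_\Delta : \cS_q^{\LRY}(\Sigma) \longrightarrow \cT_\Delta,
\]
where $\cT_\Delta$ is a quantum torus whose generators are indexed by the edges of $\Delta$ and by the corners at the boundary marked points. For each $\alpha \in \mathsf{B}(\Sigma,V)$, the intersection numbers of $\alpha$ with the edges of $\Delta$, combined with the boundary state data, produce a leading monomial $M(\alpha) \in \cT_\Delta$ so that $\mathrm{tr}_\Delta(\alpha) = c \cdot M(\alpha) + (\text{lower order})$ for some invertible scalar $c$. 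Simplicity together with the ordering and increasing-state conventions make the assignment $\alpha \mapsto M(\alpha)$ injective on $\mathsf{B}(\Sigma,V)$, and a standard leading-term argument then rules out any nontrivial $\CC[v_i^\pm]$-linear relation.

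The main obstacle is the construction and well-definedness of $\mathrm{tr}_\Delta$ in the LRY setting, particularly its compatibility with the puncture relations (C) and (D), because an ideal triangulation in the punctured case necessarily uses edges ending at interior punctures and the local splitting there is delicate. I would approach this in two stages: first obtain the statement for the stated skein algebra $\cS_q^{\rL+}(\Sigma)$, for which a basis and a quantum trace are essentially due to L\^e \cite{Le18}, and then extend it to the LRY algebra by adjoining the generators corresponding to arcs with endpoints at interior punctures. The square-trick identity expressing $vw\beta^{2}$ as a $\CC$-linear combination of loops, together with the commutative diagram \eqref{eqn:skeinalgebradiagram}, should control these new generators and transfer both the spanning and linear independence statements from the stated-skein side to the LRY side.
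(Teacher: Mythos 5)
The theorem is quoted from \cite[Theorem 3.6]{BKL24} and not proved in the present paper, so there is no in-text proof to compare against; I will assess your proposal against the approach that reference (and its predecessors) actually takes.

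Your spanning argument is in the right spirit: a reduction-to-normal-form using (A)--(D), (E$'$), (F$'$). One caution is that the complexity triple you propose must genuinely terminate. Relation (D) at an interior puncture can increase the number of diagram components and shift crossings outward, so a naive count of interior double points is not monotone; the usual fix is to filter by geometric intersection number with a fixed triangulation, or to work in a filtered algebra and argue on leading terms. This is a detail you would need to pin down, but the overall plan is standard.

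The linear-independence argument via a quantum trace $\mathrm{tr}_\Delta$ is where the real gap lies. In the LRY/Roger--Yang setting the quantum trace is precisely the main construction of \cite{BKL24}, and both its well-definedness and its injectivity are proved \emph{after} and \emph{using} the basis theorem you are trying to establish; the same logical order appears in \cite{Le18} for $\cS_q^{\rL+}(\Sigma)$. So invoking $\mathrm{tr}_\Delta$ here is circular. Restricting to leading monomials does not rescue the argument, because you must first verify that $\mathrm{tr}_\Delta$ annihilates the defining relations, and the verification for the puncture relations (C) and (D) is exactly the hard part that you flag as the ``main obstacle.'' The route actually taken in \cite{RY14}, \cite{Mul16}, and \cite{BKL24} is a diamond-lemma / confluence argument: one shows the rewriting system underlying your spanning step is terminating and locally confluent, which delivers spanning and linear independence simultaneously, with no external embedding required. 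Your idea of bootstrapping from the known basis of $\cS_q^{\rL+}(\Sigma)$ via the square trick and diagram \eqref{eqn:skeinalgebradiagram} is a reasonable instinct, but the square trick only produces \emph{relations} $vw\beta^{2}=(\text{loops})$; it does not by itself exclude hidden linear dependences among arcs ending at punctures. You would still need to control the kernel of the natural surjection from the free $\CC[v_i^{\pm}]$-module on $\mathsf{B}(\Sigma,V)$ onto $\cS_q^{\LRY}(\Sigma)$, and that is exactly what the confluence argument is designed to do.
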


\begin{remark}
One may describe a $\CC$-basis of $\cS_{q}^{\rL}(\Sigma)$ as well as that of $\cS_{q}^{\rL+}(\Sigma)$ in a similar way \cite{Le18}.
\end{remark}

\subsection{Bases of MRY skein algebras}\label{ssec:MRYbases}
Any $V$-tangle diagram has a unique over/under information at every boundary marked point such that the tangle diagram obtained by the moving trick is positively ordered and the number of crossings in $\Int \Sigma$ do not change. For a $V$-tangle diagram, we call crossings with such over/under information \emph{preferred crossings}.

\begin{definition}\label{def:RMC}
A $V$-tangle diagram is called \emph{a reduced multicurve} if 
\begin{enumerate}
\item it has no intersection in $\Int \Sigma$, including at $V_{\circ}$;
\item it has no loops which bounds an embedded open disk in $\Sigma$ including at most one interior puncture;
\item it has no null-homotopic loops and arcs;
\item at each boundary puncture, it has a preferred crossing.
\end{enumerate}
Let $\RMC$ denote the set of isotopy classes of reduced multicurves. 
\end{definition}

\begin{proposition}[\protect{\cite[Proposition 6.2]{BKL24}}]\label{prop:basis}
Let $\Sigma$ be a marked surface. Then $\cS_{q}^{\MRY}(\Sigma)$ is a free $\CC[v_{i}^{\pm}]$-module generated by $\RMC$. 
\end{proposition}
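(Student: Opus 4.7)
There are two assertions to verify: (a) the set $\RMC$ spans $\cS_{q}^{\MRY}(\Sigma)$ as a $\CC[v_{i}^{\pm}]$-module, and (b) the elements of $\RMC$ are $\CC[v_{i}^{\pm}]$-linearly independent in $\cS_{q}^{\MRY}(\Sigma)$. I would attack the two parts by quite different techniques, reserving the harder one for the LRY comparison.

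For spanning, I would argue by induction on a lexicographic complexity function $c(\alpha)$ attached to a $V$-tangle diagram $\alpha$, built from (i) the number of transverse self-intersections in $\Int \Sigma \setminus V_{\circ}$, (ii) the number of crossings at interior punctures, and (iii) the total number of null-homotopic loops, peripheral loops encircling exactly one puncture, and null-homotopic arcs. Each defining relation strictly reduces $c$: relation (A) resolves interior crossings, relation (D) replaces a crossing at an interior puncture by a non-crossing configuration at the cost of a factor $v^{-1}$, relation (C) removes peripheral loops around a single interior puncture, relations (B) and (F) kill null-homotopic loops and arcs respectively, and relation (E) lets us reorder heights at each boundary marked point to reach a preferred crossing at the cost of a factor $q^{\pm 1/2}$. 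Interleaved with Reidemeister moves to recognize low-complexity sub-configurations, the iterated application of (A)--(F) expresses any $V$-tangle as a $\CC[v_{i}^{\pm}]$-linear combination of reduced multicurves, and hence $\RMC$ spans.

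For linear independence, I would transport the problem into the LRY setting, where the basis is already known. Specifically, consider the moving-trick homomorphism $m \colon \cS_{q}^{\MRY}(\Sigma) \to \cS_{q}^{\LRY}(\Sigma)$ of (\ref{eqn:movingmap}). For $\alpha \in \RMC$, condition~(4) in Definition~\ref{def:RMC} ensures that applying the moving trick to $\alpha$ produces a positively ordered $\partial$-tangle diagram with the same (namely zero) number of interior crossings. Assigning the state $+$ to every endpoint produces an element of the LRY basis $\mathsf{B}(\Sigma,V)$ quoted above -- the simplicity conditions on $\mathsf{B}(\Sigma,V)$ translate precisely into conditions (1)--(3) of Definition~\ref{def:RMC}, and increasingness is automatic with constant state $+$. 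The resulting map $\RMC \to \mathsf{B}(\Sigma,V)$ is injective, since the moving trick is reversible when restricted to all-plus, positively ordered diagrams (one reads off the underlying $V$-tangle unambiguously and discards the states). Consequently, $m$ sends distinct reduced multicurves to distinct basis elements up to invertible scalars in $\CC[q^{\pm 1/2}]$, and any $\CC[v_{i}^{\pm}]$-linear dependence among elements of $\RMC$ would propagate through $m$ to a forbidden dependence among distinct LRY basis elements.

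The principal obstacle, I anticipate, lies in part~(b): verifying that $\RMC \to \mathsf{B}(\Sigma,V)$ is really injective requires careful bookkeeping of the over/under data at each boundary marked point and checking that the inverse construction (pulling a positively ordered all-plus diagram back to a reduced multicurve with preferred crossings) is genuinely unambiguous. In part~(a), the subtler point is confirming that each application of (A)--(F) strictly decreases the chosen complexity; relation (D), for instance, replaces a crossing at a puncture by diagrams whose endpoints land on that puncture, and one must verify that this cannot re-introduce interior crossings or peripheral loops that inflate the invariant elsewhere. Once these two combinatorial checks are in place, the algebraic conclusion is immediate.
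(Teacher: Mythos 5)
Your two-pronged strategy — spanning via induction on a complexity function driven by the skein relations, and linear independence by transporting through the moving-trick homomorphism $m$ into $\cS_q^{\LRY}(\Sigma)$ where the basis $\mathsf{B}(\Sigma,V)$ is already known — is precisely the expected route, and the bijection you identify between $\RMC$ and the all-plus, positively-ordered subset of $\mathsf{B}(\Sigma,V)$ is the same observation underlying Proposition~\ref{prop:MRYvsLRY}. One thing worth making explicit, which you gesture at but could state more crisply: your argument needs $m$ only as a \emph{well-defined algebra homomorphism} (which is established when $m$ is introduced, by checking that relations (A)--(F) map into $\cS_q^{\LRY}$), together with the purely diagrammatic injectivity of $\RMC \to \mathsf{B}(\Sigma,V)$. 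You must not invoke the fact that $m$ is a \emph{monomorphism of algebras}, since in the paper's logical order that is a consequence of both basis theorems and using it here would be circular. With that caveat respected, your argument is sound.
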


Using the above bases and the definition of the moving trick morphism $m : \cS_{q}^{\MRY}(\Sigma) \to \cS_{q}^{\LRY}(\Sigma)$, originally given in \cite{Le18} for $\cS_{q}^{\rM+}(\Sigma) \to \cS_{q}^{\rL+}(\Sigma)$, we have the following.

\begin{proposition}\label{prop:MRYvsLRY}
Let $\Sigma$ be a marked surface. Then $\cS_{q}^{\MRY}(\Sigma)$ is isomorphic to the $\CC[v_{i}^{\pm}]$-subalgebra of $\cS_{q}^{\LRY}(\Sigma)$ generated by tangles with only $+$ states.  
\end{proposition}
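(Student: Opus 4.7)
The plan is to use the explicit bases $\RMC$ and $\mathsf{B}(\Sigma,V)$ supplied in Sections \ref{ssec:LRYbases}--\ref{ssec:MRYbases} to show that the map $m$ of \eqref{eqn:movingmap} restricts to a bijection between $\RMC$ and the subset $\mathsf{B}(\Sigma,V)^{+} \subseteq \mathsf{B}(\Sigma,V)$ consisting of those basis elements whose states are identically $+$, and then to identify the $\CC[v_i^{\pm}]$-linear span of $\mathsf{B}(\Sigma,V)^{+}$ with the subalgebra generated by $+$-stated tangles.

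First I would verify that the moving trick applied to any $\alpha \in \RMC$, together with the constant state $+$ at every boundary endpoint, produces an element of $\mathsf{B}(\Sigma,V)$. The interior of $\alpha$ is untouched by the moving trick, so absence of interior double points, of null-homotopic components, and of loops bounding at-most-once-punctured disks transfers immediately from $\alpha$ to the resulting $\partial$-tangle diagram; positive ordering is built into the moving trick itself; and an all-$+$ state is trivially increasing since $+ \ge -$. Second I would construct the inverse assignment: given a positively ordered simple $\partial$-tangle diagram with only $+$ states, sliding each boundary endpoint back along its boundary edge to the adjacent marked point of $V_{\partial}$ while preserving the relative heights produces a $V$-tangle diagram whose over/under data at each marked point is exactly the preferred crossing from Definition \ref{def:RMC}, and checking the remaining clauses of that definition shows the output is a reduced multicurve. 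These mutually inverse procedures give a bijection $\RMC \leftrightarrow \mathsf{B}(\Sigma,V)^{+}$; combined with the freeness of each algebra on its basis, this forces $m$ to be injective with image equal to the $\CC[v_i^{\pm}]$-span of $\mathsf{B}(\Sigma,V)^{+}$.

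Finally I would identify this span with the subalgebra $A \subseteq \cS_q^{\LRY}(\Sigma)$ generated by all $+$-stated tangles. The image of $m$ is contained in $A$ because $m$ sends every $V$-tangle to a $+$-stated $\partial$-tangle. Conversely, any $+$-stated $\partial$-tangle is isotopic to a positively ordered one with the same constant state (since isotopy of $\partial$-tangles permits both sliding endpoints along boundary edges and swapping their heights, as long as distinct heights are maintained), and the inverse moving trick then realizes the positively ordered representative as $m$ applied to an element of $\cS_q^{\MRY}(\Sigma)$. Hence every generator of $A$ lies in the image of $m$, so $A$ equals the $\CC[v_i^{\pm}]$-span of $\mathsf{B}(\Sigma,V)^{+}$, and $m$ becomes an isomorphism onto $A$, as claimed.

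I expect the main obstacle to be the bookkeeping in the inverse moving trick: one has to track the relative heights of the strands at each boundary marked point carefully in order to confirm that the positive ordering along a boundary edge translates back to precisely the preferred crossing data required by Definition \ref{def:RMC}, and to check that the whole construction descends from diagrams to isotopy classes. Reducing to a local model around a single marked point, together with the already-established compatibility of $m$ with stacking multiplication in \eqref{eqn:movingmap}, should make this verification routine.
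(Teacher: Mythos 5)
Your proposal is correct and matches the approach the paper intends: the authors state that the proposition follows ``using the above bases and the definition of the moving trick morphism,'' and your argument is exactly that unwinding — the moving trick gives a bijection $\RMC \leftrightarrow \mathsf{B}(\Sigma,V)^{+}$, so freeness on those bases forces $m$ to be injective with image the $\CC[v_i^{\pm}]$-span of $\mathsf{B}(\Sigma,V)^{+}$, which is then identified with the subalgebra generated by $+$-stated tangles. One small point of wording: when you say isotopy permits ``swapping their heights,'' what you actually use is that two endpoints on a boundary edge can be slid past one another \emph{along the edge} (this is an honest isotopy in $\Sigma\times(-1,1)$ precisely because their heights are distinct), so every $+$-stated $\partial$-tangle can be positively reordered without invoking the height-exchange skein relation; you might also remark that resolving crossings of a $+$-stated diagram only ever uses relations (A)--(D) and the $(+,+)$ case of (F$'$), none of which produce $-$ states, so the reduction stays in the span of $\mathsf{B}(\Sigma,V)^{+}$.
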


\subsection{Graded algebra structure}\label{ssec:gradedalgebra}
In this section, we review a natural graded $\CC$-algebra structure on $\cS_{q}^{\MRY}(\Sigma)$. 
For the set of interior punctures $V_{\circ}$, we will  assign a $\ZZ^{V_{\circ}}$-graded ring structure 
\begin{equation}\label{eqn:gradedringstructure1}
	\cS_{q}^{\MRY}(\Sigma) = \bigoplus_{\bn \in \ZZ^{V_{\circ}}}\cS_{q}^{\MRY}(\Sigma)_{\bn}
\end{equation}
on $\cS_{q}^{\MRY}(\Sigma)$, where $\cS_{q}^{\MRY}(\Sigma)_{\bn}$ is the degree $\bn \in \ZZ^{V_{\circ}}$ part. 

Recall that as a $\CC$-algebra, $\cS_{q}^{\MRY}(\Sigma)$ is generated by vertex classes  and isotopy classes of $V$-tangles. Let $\TD := \{\alpha \prod v_{i}^{m_{i}}\}$ where $\alpha \in \RMC$ and $m_{i} \in \ZZ$. Then $\TD$ is a spanning set of $\cS_{q}^{\MRY}(\Sigma)$ as a $\CC$-vector space. We first define a map 
\[
	\deg_{V} :  \TD \to \ZZ^{V_{\circ}}.
\]
Let $\be_{v}$ be the standard vector corresponding to $v \in V_{\circ}$. So any element in $\ZZ^{V_{\circ}}$ can be written uniquely as $\sum_{v \in V_{\circ}}a_{v}\be_{v}$ for some $a_{v} \in \ZZ$. For any loop $\alpha$, $\deg_{V}(\alpha) = \mathbf{0}$. If $\beta$ is an arc class connecting $v, w \in V_{\circ}$ (maybe possible that $v = w$), then $\deg_{V}(\beta) = \be_{v} + \be_{w}$. If there is a boundary marked point, we will ignore it. Finally, for the formal interior vertex class $v$ and its inverse $v^{-1}$, define $\deg_{V}(v^{\pm 1}) = \mp 2\be_{v}$. Let $\cS_{q}^{\MRY}(\Sigma)_{\bn}$ be the span of $\{x \in \TD\;|\; \deg_{V}(x) = \bn\}$. Since $\TD$ is a spanning set, we obtain \eqref{eqn:gradedringstructure1}. One can check $\deg_{V}(xy) = \deg_{V}(x) + \deg_{V}(y)$. It is also a routine calculation to check that all of the relations for the definition of $\cS_{q}^{\MRY}(\Sigma)$ are homogeneous. Therefore, we have a well-defined map 
\[
	\cS_{q}^{\MRY}(\Sigma)_{\bn} \otimes \cS_{q}^{\MRY}(\Sigma)_{\bm} \to 
	\cS_{q}^{\MRY}(\Sigma)_{\bn + \bm}
\]
and $\cS_{q}^{\MRY}(\Sigma)$ has a $\ZZ^{V_{\circ}}$-graded ring structure, as desired.

\section{Central elements}\label{sec:central}
In this section, we show that each element listed in the statement of Theorem \ref{thm:mainthm} is central, that is, commutative with all elements in $\cS_{q}^{\MRY}(\Sigma)$. Note that, by definition, all vertex classes and their formal inverses are in the center.

Define $\alpha^{(k)}$ to be the multicurve consisting of $k$ copies of $\alpha$ that are parallel in the direction of the framing of $\alpha$. Since at every end point of $\alpha$, the framing is vertical. Thus, if $\alpha$ is a $V$-tangle, then $\alpha^{(k)}$ is also a well-defined $V$-tangle. For any polynomial $P = \sum_k c_k x^k$, then $\alpha^P := \sum c_k \alpha^{(k)}$ is the \emph{threading} of $P$ along $\alpha$. If $\alpha$ is represented by a $V$-tangle diagram without any self intersection, e.g.,  $\alpha \in \RMC$, then  $\alpha^{(k)} = \alpha^{k} \in \cS_{q}^{\MRY}(\Sigma)$ and $\alpha^{P} = P(\alpha)$. The definition is extended linearly to skeins in $\cS_{q}^{\MRY}(\Sigma)$.

Central elements of the ordinary skein algebra $\cS_{q}(\Sigma)$ come from a threading operation \cite{BW16} of Chebyshev polynomials. Our main task in this section is to generalize the threading appropriately to include arc classes as well. 
In \cite{BW16}, for the Kauffman bracket skein algebra $\cS_{q}(\Sigma)$, it was shown that there is an algebra monomorphism $\Phi : \cS_{1}(\Sigma) \to \cS_{q}(\Sigma)$, which is called the \emph{Chebyshev--Frobenius homomorphism}. This homomorphism extends to the one between stated skein algebras $\Phi : \cS_{1}^{\rL+}(\Sigma) \to \cS_{q}^{\rL+}(\Sigma)$ in \cite{BL22}, as we describe below. Note that in this setup, there is no arc class ending at interior punctures.

For any loop $\alpha \in \cS_{1}^{\rL+}(\Sigma)$, we define $\Phi(\alpha) :=\alpha^{T_{n}} $, where $T_{n}(x)$ is the $n$-th Chebyshev polynomial recursively defined as 
\[
	T_{0}(x) = 2, \quad T_{1}(x) = x, \quad T_{k+2}(x) = xT_{k+1}(x) - T_{k}(x). 
\]
For any arc $\alpha \in \cS_{1}^{\rL+}(\Sigma)$ connecting boundary marked points, we set $\Phi(\alpha) := \alpha^{(n)}$. 
 If we have a disjoint union of arcs and loops, we take the disjoint union of the threading of each component.  
  It induces a well-defined algebra homomorphism $\Phi : \cS_{1}^{\rL+}(\Sigma) \to \cS_{q}^{\rL+}(\Sigma)$ \cite[Theorem 2]{BL22}. Under the existence of interior punctures, this algebra homomorphism can be extended to 
\begin{equation}\label{eqn:Chebyshevmap+}
	\Phi : \CC[v_{i}^{\pm}] \otimes_{\CC} \cS_{1}^{\rL+}(\Sigma) \to \CC[v_{i}^{\pm}] \otimes_{\CC} \cS_{q}^{\rL+}(\Sigma).
\end{equation}
One may check that the morphism $\Phi$ induces a similar map 
\begin{equation}\label{eqn:Chebyshevmap}
	\Phi : \CC[v_{i}^{\pm}] \otimes_{\CC} \cS_{1}^{\rL}(\Sigma) \to \CC[v_{i}^{\pm}] \otimes_{\CC} \cS_{q}^{\rL}(\Sigma)
\end{equation}
because for any peripheral loop $\ell_{v}$, $\ell_{v}^{T_{n}} - 2$ is in the ideal generated by $\ell_{v} - (q+q^{-1})$.

Since $\cS_{1}^{\rL}(\Sigma)$ is commutative, so is its image $\mathrm{im}\; \Phi$. Moreover, based on earlier work of \cite{Le15, BW16, BL22}, it was shown that  $\mathrm{im}\; \Phi \subseteq Z(\cS_{q}^{\rL}(\Sigma))$ \cite[Corollary 4.3]{Yu23}. Since $\cS_{q}^{\MRY}(\Sigma)$ can be understood as a subalgebra of $\cS_{q}^{\LRY}(\Sigma)$ (Proposition \ref{prop:MRYvsLRY}), we obtain the following result.

\begin{lemma}\label{lem:imChebysheviscommutative}
\begin{enumerate}
\item Let $\alpha \in \cS_{q}^{\MRY}(\Sigma)$ be a loop class on $\Sigma$. Then $\alpha^{T_{n}}$ is central. In particular, if $\alpha$ does not have any self-intersection, $T_{n}(\alpha)$ is central. 
\item Let $\beta \in \cS_{q}^{\MRY}(\Sigma)$ be an arc class on $\Sigma$ connecting two boundary marked points. Then $\beta^{(n)}$ is central. In particular, if $\beta$ does not have any self-intersection, $\beta^{n}$ is central. 
\end{enumerate}
\end{lemma}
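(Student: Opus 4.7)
The plan is to exploit the embedding $m : \cS_{q}^{\MRY}(\Sigma) \hookrightarrow \cS_{q}^{\LRY}(\Sigma)$ coming from the moving trick (Proposition \ref{prop:MRYvsLRY}), together with the centrality of the Chebyshev--Frobenius image in $\cS_{q}^{\rL}(\Sigma)$ from \cite[Corollary 4.3]{Yu23}. The first step is to locate the threaded elements inside $\cS_{q}^{\LRY}(\Sigma)$: for a loop $\alpha$ the image $m(\alpha)$ is unchanged since $\alpha$ has no boundary endpoints, and for an arc $\beta$ between two boundary marked points the image $m(\beta)$ is $\beta$ with a $+$ state assigned to each endpoint. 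In either case, $m(\alpha^{T_n}) = m(\alpha)^{T_n}$ and $m(\beta^{(n)}) = m(\beta)^{(n)}$ sit inside the subalgebra $\cS_{q}^{\rL}(\Sigma) \hookrightarrow \cS_{q}^{\LRY}(\Sigma)$ and coincide with the Chebyshev--Frobenius values $\Phi(\alpha)$ and $\Phi(\beta)$ of \eqref{eqn:Chebyshevmap}, so by Yu's theorem they belong to $Z(\cS_{q}^{\rL}(\Sigma))$.

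Since $m$ is injective, to conclude the lemma it suffices to show that $m(\alpha^{T_n})$ and $m(\beta^{(n)})$ commute with every generator of $m(\cS_{q}^{\MRY}(\Sigma))$ inside $\cS_{q}^{\LRY}(\Sigma)$. The generators already contained in $\cS_{q}^{\rL}(\Sigma)$, namely loops and $+$-stated boundary arcs, are handled directly by Yu's theorem. The only remaining generators are arcs $\gamma$ having at least one endpoint at an interior puncture $v \in V_{\circ}$. For these, I would adapt the standard Bonahon--Wong local argument: each transverse crossing between $\gamma$ and a strand of the threaded curve occurs in $\Int \Sigma \setminus V_{\circ}$, so only the braiding relation (A) is invoked at the crossing, never the puncture relation (D), and the defining recursion of $T_n$ (respectively, the $n$-fold parallel threading for boundary arcs) arranges the resulting resolutions into an identity that swaps over- and undercrossings of $\gamma$, yielding the desired commutation.

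The main obstacle will be the last step, namely verifying cleanly that the classical Bonahon--Wong crossing calculus survives when some of the ambient strands terminate at an interior puncture. The subtle point to rule out is the possibility that a resolution of crossings leaves an inner loop encircling a puncture that has to be absorbed via relation (D), which could a priori introduce a $v$-weight that breaks the telescoping. General position of the diagram together with the hypothesis that $\alpha$ and $\beta$ live away from $V_{\circ}$ should ensure this cannot happen, but it would need to be argued carefully. A conceptually cleaner alternative would be to extend $\Phi$ directly to a homomorphism $\cS_{1}^{\LRY}(\Sigma) \to \cS_{q}^{\LRY}(\Sigma)$ and obtain centrality there for free; however, the natural extension to arcs between interior punctures is not literal threading but requires the $\sqrt{v}\sqrt{w}$ twist appearing in Theorem \ref{thm:mainthm}(2), so the local argument above seems more economical for the two cases at hand.
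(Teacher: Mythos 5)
Your proposal is correct and follows essentially the same route as the paper: embed via the moving-trick monomorphism $m$ into $\cS_{q}^{\LRY}(\Sigma)$, use Yu's centrality result in $\cS_{q}^{\rL}(\Sigma)$, and then argue local transparency of the threaded curve against arcs meeting interior punctures (the paper cites \cite[Corollary 2.3]{Le15} for this local step in part (1), and \cite{BL22} for part (2), where you invoke the Bonahon--Wong calculus). Your flagged concern about relation (D) being triggered is handled exactly as you anticipate --- the transparency argument is entirely local to a ball around each crossing, which contains no punctures, so only relation (A) ever enters.
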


\begin{proof}
For a loop $\alpha \in \cS_{q}^{\MRY}(\Sigma)$, $m(\alpha^{T_{n}}) = \alpha^{T_{n}} \in \cS_{q}^{\LRY}(\Sigma)$. And $\alpha^{T_{n}}$ is also in $\cS_{q}^{\rL}(\Sigma)$ and it is central there. So it remains to check that it is commutative with arc classes joining an interior puncture and another marked point (either interior puncture or boundary marked point). Since $\alpha^{T_{n}}$ does not intersect any interior punctures, we may check locally its transparency with every arc class following the argument in \cite[Corollary 2.3]{Le15}.  
Hence $\alpha^{T_{n}} \in Z(\cS_{q}^{\LRY}(\Sigma))$. Since $m$ is a monomorphism, we may conclude that $\alpha^{T_{n}} \in Z(\cS_{q}^{\MRY}(\Sigma))$. This proves (1). For part (2), this follows from \cite{BL22}. \end{proof}

We also obtain the following central element. 

\begin{lemma}\label{lem:boudnaryarcs}
For each component $D$ of $\partial \Sigma$ with boundary arcs $\beta_{1}, \beta_{2}, \cdots, \beta_{k}$ in any order, the product $ \prod_{i}\beta_{i}$ is central in $\cS_{q}^{\MRY}(\Sigma)$.
\end{lemma}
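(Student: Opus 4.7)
The plan is to check that $\gamma := \prod_{i}\beta_{i}$ commutes with every element of a $\CC[v_{i}^{\pm}]$-linear spanning set of $\cS_{q}^{\MRY}(\Sigma)$; by Proposition \ref{prop:basis} it suffices to test commutativity against reduced multicurves $\alpha \in \RMC$.

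The first observation is that $\gamma$ may be supported in an arbitrarily small collar neighborhood of $D$ in $\Sigma$, so its only interaction with another skein element is at the marked points $p_{1},\ldots,p_{k}$ on $D$. Consequently, any loop component of $\alpha$, as well as any arc component of $\alpha$ whose endpoints avoid $D$ (for example arcs between interior punctures, or arcs ending on a different boundary component), can be isotoped to be disjoint from this collar and therefore commutes with $\gamma$ on the nose. So the computation reduces to the case of an arc component $\eta$ of $\alpha$ having at least one endpoint on $D$.

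For such an $\eta$, say with an endpoint at $p_{i}\in D$, I would use the moving trick to represent $\gamma\eta$ and $\eta\gamma$ by the same underlying $V$-tangle diagram, differing only in the height ordering at each such $p_{i}$: in $\gamma\eta$ the endpoint of $\eta$ at $p_{i}$ lies above the two endpoints of $\gamma$ at $p_{i}$ (one coming from $\beta_{i-1}$ and one from $\beta_{i}$), while in $\eta\gamma$ it lies below both. Passing from one ordering to the other amounts to two consecutive height swaps at $p_{i}$, each of which by relation (E) multiplies the diagram by a factor $q^{\pm 1}$. The crucial point is that $\beta_{i-1}$ and $\beta_{i}$ approach $p_{i}$ along $\partial\Sigma$ from opposite directions, so under the moving trick their endpoints are pushed along $D$ on opposite sides of $p_{i}$; hence the two resulting crossings with $\eta$ have opposite signs, and the two swap factors are reciprocals whose product is $1$. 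Applying this cancellation at each marked point where $\alpha$ meets $D$, and over all arc components of $\alpha$ with endpoints on $D$, yields $\gamma\alpha = \alpha\gamma$.

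The main obstacle is the sign bookkeeping in the last step: one must verify consistently, with respect to the chosen conventions for the moving trick and the signs in relation (E), that the two adjacent boundary-edge endpoints at $p_{i}$ really do generate crossings of opposite type with any incident $\eta$. A cleaner alternative is to pass to $\cS_{q}^{\LRY}(\Sigma)$ via the monomorphism $m$ of Proposition \ref{prop:MRYvsLRY}: there $m(\gamma)$ is the product of all $(+,+)$-stated boundary arcs along $D$, and since only $+$ states appear one can compute the commutator with any stated tangle directly from relation (E'), where the correction terms introduced at neighboring marked points of $D$ telescope around the cycle. Either way, the result follows and is independent of the chosen ordering of $\beta_{1},\ldots,\beta_{k}$ (any two orderings differ by a further application of the same cancellation).
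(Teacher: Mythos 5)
Your argument is correct and follows essentially the same route as the paper's: reduce to an arc $\eta$ with an endpoint at a marked point $p_i$ between two consecutive boundary edges of $D$, apply the height-exchange relation (E) twice, and observe that the two factors $q$ and $q^{-1}$ cancel because $\beta_{i-1}$ and $\beta_i$ approach $p_i$ from opposite sides; the independence of the ordering is handled the same way (any two orderings of the $\beta_i$ differ by an overall power of $q$). The alternative you sketch via $m$ and $\cS_q^{\LRY}(\Sigma)$ is a reasonable reformulation but is not needed; the paper works directly in $\cS_q^{\MRY}(\Sigma)$ exactly as in your first argument.
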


\begin{proof}
For simplicity,   consider first the case where $\beta_{1}, \beta_{2}, \cdots, \beta_{k}$  is in cyclic order.  Set $\beta_{k+1} := \beta_{1}$, and let $\beta_{D} := \prod_{i}\beta_{i}$ in that case. 
The boundary class is disjoint with all loop classes, it is sufficient to check the commutativity with an arc $\gamma$ whose end is a boundary marked point $v$ on $D$, between $\beta_{i}$ and $\beta_{i+1}$. Using the height exchange relation (E) in Definition \ref{def:MRY} twice, we can exchange the order of $\gamma$ and $\beta_{i}\beta_{i+1}$. During this process we need to multiply $qq^{-1}$, so $\gamma\beta_{i}\beta_{i+1} = \beta_{i+1}\beta_{i}\gamma$. If both ends of $\gamma$ meet $D$, we may apply the argument twice.

For the more general case, note that the order of the product $\prod_{i}\beta_{i}$ does not really matter indeed. For any permutation $\sigma \in S_{k}$ and a product $\prod_{i}\beta_{\sigma(i)}$, applying the relation (E) in Definition \ref{def:MRY}, we can conclude that $\prod_{i}\beta_{\sigma(i)}$ is equal to $q^{t}\prod_{i}\beta_{i} = q^{t}\beta_{D}$ for some $t \in \ZZ$, hence it is central, too. 
\end{proof}

Combining Item (2) of Lemma \ref{lem:imChebysheviscommutative} and Lemma \ref{lem:boudnaryarcs}, we obtain the following result. We fix a boundary component $D$ of $\partial \Sigma$, where $D$ has $k$ marked points. Let  $\beta := \prod_{i}\beta_{i}^{m_{i}}$ be a product of boundary arcs on $D$ with multiplicity $\deg_{D}(\beta) := (m_{i}) \in \ZZ^{D}$.  

\begin{corollary}\label{cor:boundarycenterdegree}
The product of boundary arcs $\beta$ is in $Z(\cS_{q}^{\MRY}(\Sigma))$ if and only if $\deg_{D}(\beta) = (m_{i})$ is in the sublattice $M_{D} = \langle  n\be_{1}, n\be_{2} , \ldots,  n\be_{k} ,  \sum_{i=1}^{k}\be_{i} \rangle \subset \ZZ^{D}$.  \end{corollary}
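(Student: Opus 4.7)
The plan is to prove the two directions separately. For sufficiency, suppose $(m_i) \in M_D$ and write $m_i = a + n c_i$, so that $(m_i) = a \sum_{j=1}^k \be_j + n \sum_{j=1}^k c_j \be_j$. Because relation (E) of Definition \ref{def:MRY} implies that any two boundary arcs on $D$ $q$-commute, the product $\beta = \prod_i \beta_i^{m_i}$ equals $q^t \beta_D^a \prod_j \beta_j^{n c_j}$ for some $t \in \ZZ$, regardless of the chosen ordering. Lemma \ref{lem:boudnaryarcs} shows $\beta_D$ is central, and Lemma \ref{lem:imChebysheviscommutative}(2) applied to each simple boundary arc $\beta_j$ (which connects two boundary marked points and has no self-intersection) shows $\beta_j^n$ is central. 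Since the center is a $\CC$-subalgebra, $\beta$ itself is central.

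For necessity, suppose $\beta$ is central. For each marked point $v \in D$ sitting between $\beta_i$ and $\beta_{i+1}$ in the cyclic order, I will choose a test arc $\gamma_v$ with one endpoint at $v$ and the other at a vertex of $\Sigma$ not on $D$, arranged so that $\gamma_v$ meets no $\beta_j$ except at the shared endpoint $v$ with $\beta_i$ and $\beta_{i+1}$. The same double application of relation (E) used in the proof of Lemma \ref{lem:boudnaryarcs} then yields $\gamma_v \beta_i = q^{\epsilon} \beta_i \gamma_v$ and $\gamma_v \beta_{i+1} = q^{-\epsilon} \beta_{i+1} \gamma_v$ for some $\epsilon \in \{\pm 1\}$, while $\gamma_v$ commutes with $\beta_j$ for $j \ne i, i+1$. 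Combining these, $\gamma_v \beta = q^{\epsilon(m_i - m_{i+1})} \beta \gamma_v$.

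Centrality of $\beta$ then gives $(1 - q^{\epsilon(m_i - m_{i+1})}) \beta \gamma_v = 0$. After reordering heights at boundary marked points via (E), the element $\beta \gamma_v$ is a $q$-power multiple of a single reduced multicurve in the basis $\RMC$ of Proposition \ref{prop:basis}, hence nonzero in $\cS_q^{\MRY}(\Sigma)$; this forces $q^{\epsilon(m_i - m_{i+1})} = 1$, and since $q$ has order $n$, we conclude $n \mid (m_i - m_{i+1})$. Letting $v$ range over all marked points of $D$ yields $m_1 \equiv m_2 \equiv \cdots \equiv m_k \pmod n$. Writing $a$ for the common residue and $c_i := (m_i - a)/n \in \ZZ$, the decomposition $(m_i) = a \sum_j \be_j + n \sum_j c_j \be_j$ exhibits $(m_i)$ as an element of $M_D$.

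The main obstacle I anticipate is choosing $\gamma_v$ cleanly in the case that $\Sigma$ is so degenerate that no vertex lies off $D$ (e.g.\ a disk with boundary marked points only). In such situations $\gamma_v$ must share its second endpoint with another $\beta_{j}, \beta_{j+1}$ pair, producing an extra factor $q^{\epsilon'(m_j - m_{j+1})}$, and one must extract the individual congruences $n \mid (m_i - m_{i+1})$ by testing against a family of such arcs and solving the resulting system. Alternatively, I expect the cleanest resolution is to transport the computation to the quantum torus presentation afforded by the localization $\cS_q^{\MRY}(\Sigma)[\partial^{-1}] \cong \overline{\cS}_q^{\LRY}(\Sigma)$, in which the sublattice $M_D$ appears directly as the kernel, modulo $n$, of the $q$-commutation form restricted to the boundary-arc monomials of $D$.
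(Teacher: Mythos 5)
Your sufficiency argument is identical to the paper's: write $(m_i) = a\sum\be_j + n\sum c_j\be_j$, observe $\beta$ is a $q$-power times $\beta_D^a\prod\beta_j^{nc_j}$, and invoke Lemmas~\ref{lem:boudnaryarcs} and \ref{lem:imChebysheviscommutative}(2).

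For necessity you have taken a genuinely different route. The paper tests $\beta$ against a boundary arc $\beta_i$ itself: after reducing $(m_i)$ modulo $M_D$ to entries in $[0,n)$ with a zero adjacent to a nonzero entry, it commutes $\beta$ past $\beta_1$ and reads off a nontrivial $q$-power. The advantage is that $\beta_1$ always exists; the disadvantage is that $\beta_1$ meets $\beta$ at \emph{both} of its endpoints, so the true commutation factor is $q^{\pm(m_2-m_k)}$ rather than $q^{m_2}$, and for $k$ even the boundary arcs alone cannot separate, say, $(1,0,1,0,\ldots)$ from $M_D$. (This is a small imprecision in the paper's proof that is patched by using a different test skein, e.g.\ a diagonal.) Your test arcs $\gamma_v$, having only one endpoint on $D$, interact with $\beta$ at a single corner, which cleanly gives the congruences $m_i\equiv m_{i+1}\pmod n$ for all $i$ and hence $(m_i)\in M_D$ directly, without any case analysis on $k$.

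The obstacle you flag is, however, real and deserves more than a gesture. When no vertex lies off $D$ (a disk with all marked points on one boundary circle), every test arc has both endpoints on $D$, each contributing a term $\pm(m_{j}-m_{j+1})$, and for $k$ even these pairwise sums need not span the annihilator of $M_D/nM_D$; one genuinely needs a diagonal whose two corner contributions have the \emph{same} sign (which happens for a diagonal $[v_1,v_3]$ in a square, for instance, but depends on the corner-ordering convention and must be checked). Your closing suggestion to pass to the quantum torus via $\cS_q^{\MRY}(\Sigma)[\partial^{-1}]\cong\overline{\cS}_q^{\LRY}(\Sigma)$ is the cleanest way to dispose of this: $M_D\bmod n$ is the radical of the $q$-commutation form restricted to the boundary monomials of $D$, and centrality is decided coordinate-free there. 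If you go the elementary route, you should replace the hand-wave with an explicit choice of diagonal and a sign computation.
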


\begin{proof}  
We know that $\beta_{D} = \prod_{i}\beta_{i}$  and $\beta_{i}^{n}$ are central, and 
$\deg_{D}(\beta_{D}) = \sum_{i}\be_{i}$ and $\deg_{D}(\beta_{i}^{n}) = n\be_{i}$. For any $\beta$ with $\deg_{D}(\beta) \in M_{D}$, up to a scalar multiple by $q^{t}$, $\beta$ is a product of $\beta_{D}$ and $\beta_{i}^{n}$, hence $\beta \in Z(\cS_{q}^{\MRY}(\Sigma))$.

Conversely, suppose that $\deg_{D}(\beta) = \sum m_{i}\be_{i} \notin M_{D}$. Eliminating $\beta_{D}$ and $\beta_{i}^{n}$ if necessary, we may assume that $(m_{i})$ is nontrivial, $0 \le m_{i} < n$ and there is one coefficient (say $m_{1}$) which is zero. We may explicitly compute that $\beta \beta_{1} = q^{m_{2}}\beta_{1}\beta$. Since $0 < m_{2} < n$, $q^{m_{2}} \ne 1$, and since $\cS_{q}^{\MRY}(\Sigma)$ is a domain \cite[Theorem 5]{BKL24}, $\beta$ is not central. 
\end{proof}

We now turn our attention to new arc classes that connect some interior punctures. 

\begin{lemma}\label{lem:arcboundaryinterior}
Let $\beta$ be an arc class connecting a boundary puncture $b$ and an interior puncture $v$. Suppose $\beta$ does not admit any self-intersection on its diagram. Then $\beta^{n}$ is central. 
\end{lemma}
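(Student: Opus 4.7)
The plan is to reduce to Lemma~\ref{lem:imChebysheviscommutative}(2) by means of the \emph{square trick} at the interior puncture $v$. Applying the puncture-skein relation (D) to the two parallel copies of $\beta$ appearing in the algebra product $\beta^{2}$ replaces the pair of $\beta$-endpoints at $v$ by a linear combination of two smoothings around $v$ on opposite sides, schematically
\[
v\,\beta^{2} \;=\; q^{1/2}\,\tilde\beta_{+} \;+\; q^{-1/2}\,\tilde\beta_{-},
\]
where $\tilde\beta_{\pm}$ are arcs from $b$ to $b$ obtained by doubling $\beta$ and rerouting around $v$ on one of the two sides. Since $\beta$ is simple on its diagram, so are $\tilde\beta_{\pm}$, and Lemma~\ref{lem:imChebysheviscommutative}(2) then guarantees that $\tilde\beta_{+}^{\,n}$ and $\tilde\beta_{-}^{\,n}$ are central.

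To verify $\beta^{n} \in Z(\cS_q^{\MRY}(\Sigma))$, by $\CC[v_i^{\pm}]$-linearity and Proposition~\ref{prop:basis} it suffices to show $[\beta^{n},\gamma] = 0$ for a single simple loop or arc $\gamma$ in general position with respect to $\beta$. The interactions between $\gamma$ and the $n$ parallel strands comprising $\beta^{n}$ sort into three types: (i) interior crossings away from $b$ and $v$; (ii) height exchanges at the boundary marked point $b$, if $\gamma$ meets $b$; and (iii) puncture-skein interactions at $v$, if $\gamma$ meets $v$.

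Types (i) and (ii) are handled by routine arguments. For (i), the Chebyshev--Frobenius transparency argument of~\cite{BW16, BL22} that underlies Lemma~\ref{lem:imChebysheviscommutative}(2) shows that the $n$-fold Kauffman resolutions coming from crossings of $\gamma$ with the $n$ parallel copies of $\beta$ collapse to a trivial commutator whenever $q^{n}=1$. For (ii), each of the $n$ height exchanges at $b$ contributes a factor of $q^{\pm 1}$ via relation (E), and these multiply to $q^{\pm n}=1$, essentially in the same spirit as the computation proving Corollary~\ref{cor:boundarycenterdegree}.

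The main obstacle is type (iii): showing $[\beta^{n},\gamma]=0$ when $\gamma$ also has an endpoint at $v$, because Lemma~\ref{lem:imChebysheviscommutative}(2) does not apply directly. Here I would exploit that $n$ is odd to write
\[
\beta^{n} \;=\; \beta\cdot(\beta^{2})^{(n-1)/2} \;=\; v^{-(n-1)/2}\,\beta\cdot\bigl(q^{1/2}\tilde\beta_{+}+q^{-1/2}\tilde\beta_{-}\bigr)^{(n-1)/2},
\]
so that the bulk of $\beta^{n}$ is a polynomial in the boundary-to-boundary arcs $\tilde\beta_{\pm}$, whose $n$-th powers are central, and only one residual factor of $\beta$ retains an endpoint at $v$. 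Commuting the bulk factor past $\gamma$ is controlled by the centrality of $\tilde\beta_{\pm}^{\,n}$ together with a quantum binomial identity that uses the $q$-commutation between $\tilde\beta_{+}$ and $\tilde\beta_{-}$ and the vanishing of $q$-binomial coefficients at the primitive $n$-th root $q$; the lone residual $\beta$ then produces a single puncture-skein resolution against $\gamma$'s endpoint at $v$. The delicate step, and the hardest part of the proof, is the bookkeeping that shows the cross-terms in the binomial expansion pair off with this residual (D)-contribution to cancel exactly, yielding $[\beta^{n},\gamma]=0$.
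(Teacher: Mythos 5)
Your proposal correctly identifies the three interaction types that need checking and is on the right track for types (i) and (ii), but for type (iii) — the case where $\gamma$ also ends at the interior puncture $v$, which you correctly flag as the crux — you do not actually carry out the argument. You write that "the delicate step, and the hardest part of the proof, is the bookkeeping that shows the cross-terms in the binomial expansion pair off with this residual (D)-contribution to cancel exactly," but this is precisely the content the lemma requires, and leaving it as an acknowledged gap means the proposal does not establish the result.

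There are also concrete concerns about whether the proposed reduction can be made to work. First, the centrality of $\tilde\beta_{\pm}^{\,n}$ from Lemma~\ref{lem:imChebysheviscommutative}(2) gives you no control over $(\tilde\beta_{\pm})^{(n-1)/2}$ appearing in your expansion, since $(n-1)/2 < n$; you are invoking the result in a regime where it doesn't apply. Second, the quantum-binomial-coefficient vanishing you appeal to requires that $\tilde\beta_{+}$ and $\tilde\beta_{-}$ obey a clean $q$-commutation relation $\tilde\beta_{+}\tilde\beta_{-} = q^{c}\tilde\beta_{-}\tilde\beta_{+}$; these are two arcs obtained by opposite resolutions at $v$ (one typically a near-boundary arc and the other encircling $v$), and you haven't verified such a relation, nor is it obvious. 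Third, even granting the binomial manipulations, the "residual factor of $\beta$" still has an endpoint at $v$ meeting $\gamma$'s endpoint there, and the interaction of that single puncture-skein resolution with the rest of the expansion is exactly what needs a computation. By contrast, the paper's proof sidesteps all of this: it first resolves $\beta^{n}$ into an explicit scalar multiple of a single diagram $[\beta^{n}]$ (using relations (D), (E), and Reidemeister moves), and then, in the case where $\gamma$ meets $v$, directly computes $v\gamma[\beta^{n}]$ and $v[\beta^{n}]\gamma$, obtaining $q^{n/2}[\beta^{n-1}]R + q^{-n/2}L[\beta^{n-1}]$ and $q^{-n/2}[\beta^{n-1}]R + q^{n/2}L[\beta^{n-1}]$ respectively, which agree precisely because $q^{n}=1$. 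That direct diagrammatic cancellation is what your sketch is gesturing at but does not supply.
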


\begin{proof}
Using (D), (E), and Reidemeister moves, 
\[
	\beta^{n}=\begin{cases}
v^{-(n-1)/2}q^{n(n-1)/4}\begin{array}{c}\includegraphics[scale=0.18]{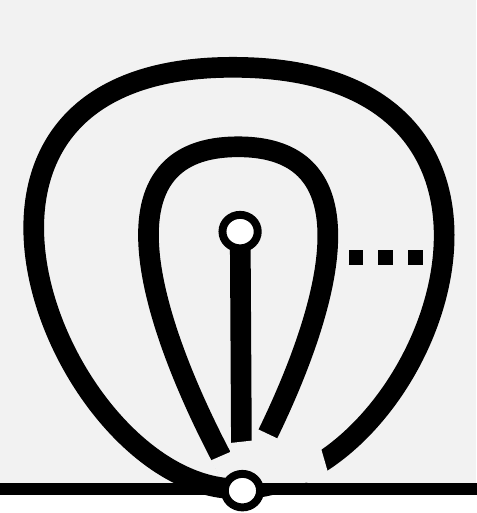}\end{array}& \text{if $n$ is odd},\\
v^{-n/2}q^{n(n-1)/4}\begin{array}{c}\includegraphics[scale=0.18]{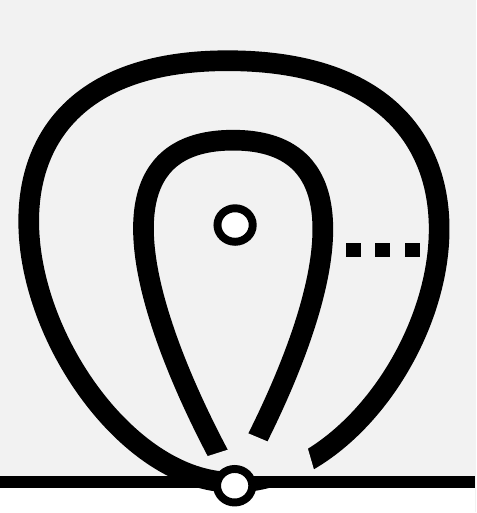}\end{array}& \text{if $n$ is even}, 
\end{cases}
\]
where the heights of endpoints at $b$ increase in counter-clockwise. 

Let $[\beta^{n}]$ denote $\begin{array}{c}\includegraphics[scale=0.18]{draws/arcs_odd.pdf}\end{array}$ or $\begin{array}{c}\includegraphics[scale=0.18]{draws/arcs_even.pdf}\end{array}$ depending on the parity of $n$. It suffices to show that $[\beta^{n}]$ is central to show the claim. 

For any $V$-tangle diagram $\gamma$, we show that $[\beta^{n}]\gamma = \gamma [\beta^{n}]$. First we consider 3 cases where $\gamma$ meets $\beta$: 
\begin{enumerate}
    \item $\gamma$ intersects with $\beta$ just once at $b$, 
    \item $\gamma$ intersects with $\beta$ just once only at $v$, 
    \item $\gamma$ intersects with $\beta$ just once in  $\Int \Sigma\setminus V$. 
\end{enumerate}

In the first case, we have $\gamma[\beta^{n}] =q^{-n}[\beta^{n}]\gamma = [\beta^{n}]\gamma$ using (E) and Reidemeister moves. 

Next, we consider the second case. When $n=3$, 
\begin{eqnarray*}
v\gamma[\beta^{3}]&=&v\begin{array}{c}\includegraphics[scale=0.18]{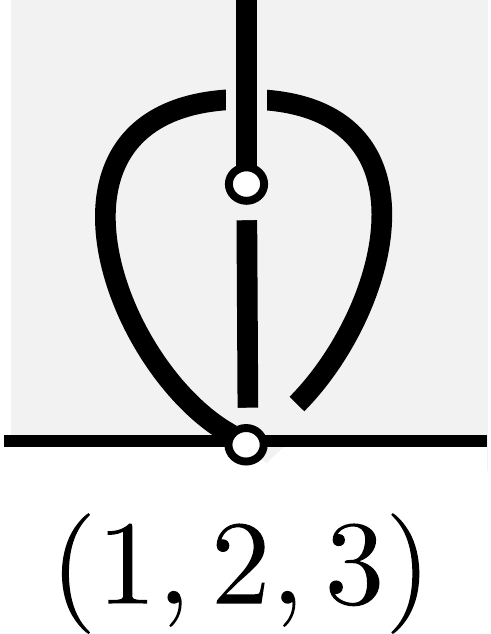}\end{array}=q^{1/2}\begin{array}{c}\includegraphics[scale=0.18]{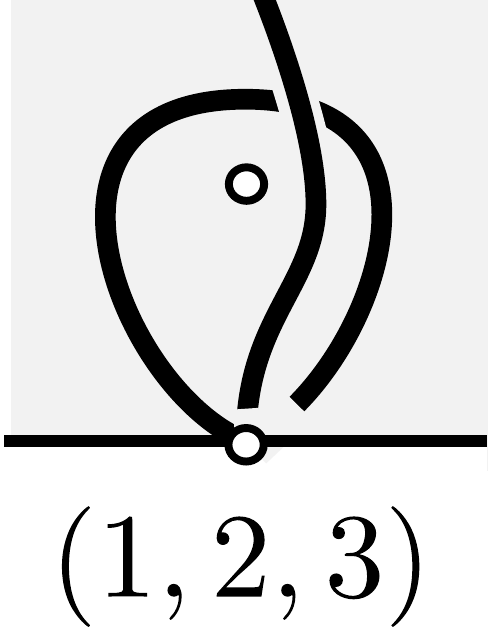}\end{array}+q^{-1/2}
\begin{array}{c}\includegraphics[scale=0.18]{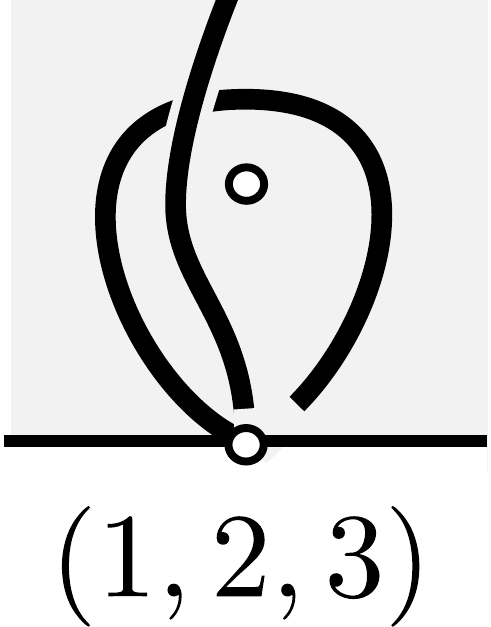}\end{array}
\\
&=&q^{1/2}\begin{array}{c}\includegraphics[scale=0.18]{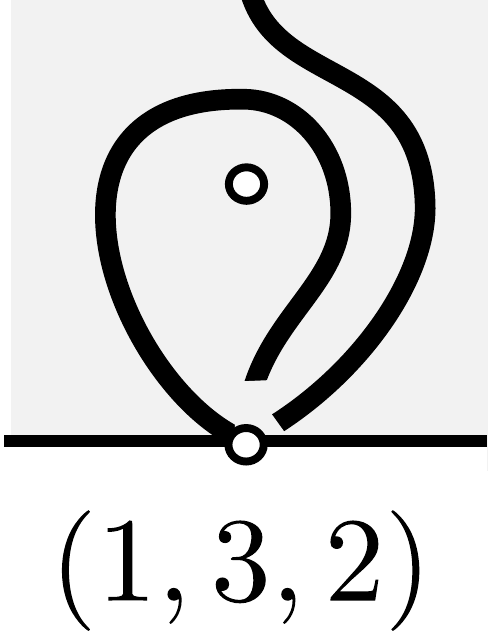}\end{array}+q^{-3/2}
\begin{array}{c}\includegraphics[scale=0.18]{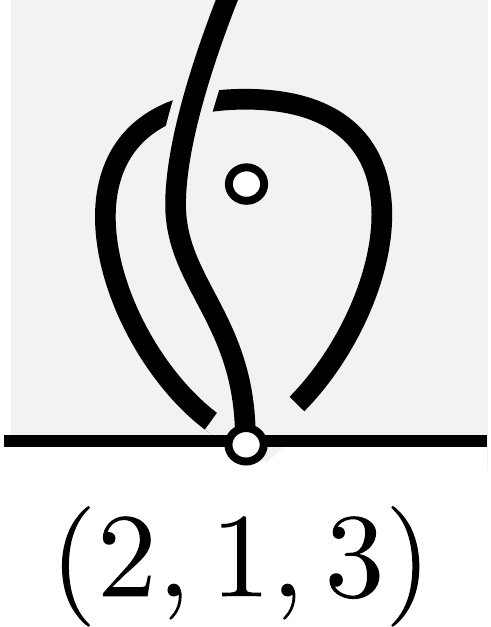}\end{array}
=q^{3/2}\begin{array}{c}\includegraphics[scale=0.18]{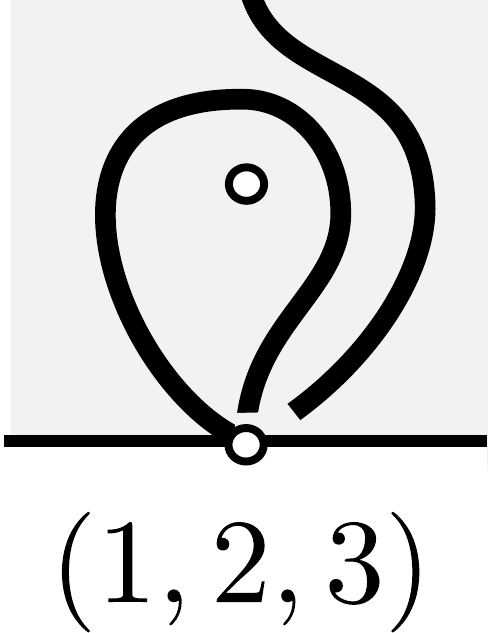}\end{array}+q^{-3/2}
\begin{array}{c}\includegraphics[scale=0.18]{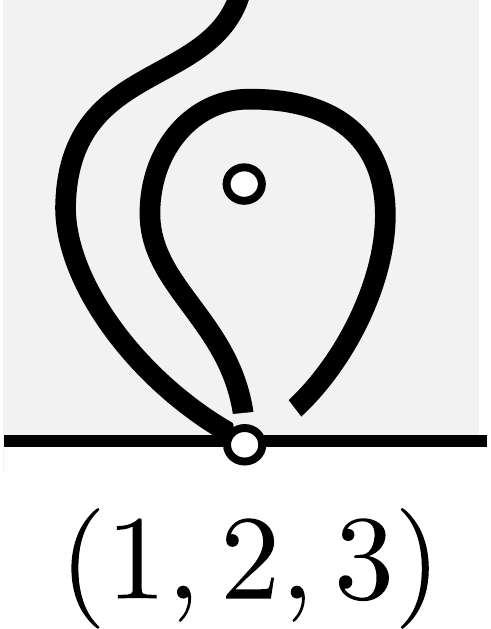}\end{array}\\
&=&q^{3/2}[\beta^{2}]R+q^{-3/2}L[\beta^{2}],
\end{eqnarray*}
where the numbers below each picture denote the height information of the three strands incident to $b$. A smaller number implies the corresponding strand is higher. We denote $R$ and $L$ by the middle and right (partial) arcs in Figure \ref{pic:gamma_rl} respectively, and the equalities follows from (D), (E) and Reidemeister moves. By the same argument, we also have $v[\beta^{3}] \gamma =q^{-3/2}[\beta^{2}]R+q^{3/2}L[\beta^{2}]$. Since $q$ is a root of unity of order $n$, we have $[\beta^{3}]\gamma=\gamma[\beta^{3}]$. 

For a general $n$, it is a routine computation to check 
\[
	[\beta^{n}]\gamma=q^{n/2}[\beta^{n-1}]R+q^{-n/2}L[\beta^{n-1}]=
q^{-n/2}[\beta^{n-1}]R+q^{n/2}L[\beta^{n-1}]=\gamma[\beta^{n}]
\]
when $q$ is a root of unity of order $n$. 

\begin{figure}[ht]\centering\includegraphics[width=140pt]{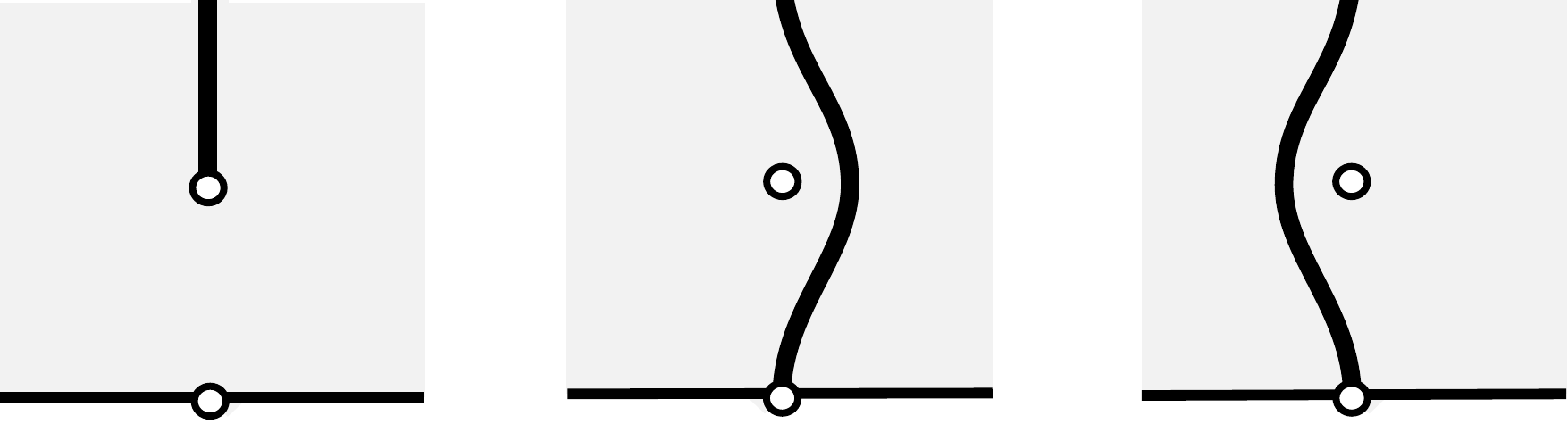}\caption{Left: (partial) picture of $\gamma$,\quad Middle: (partial) picture of $R$,\quad Right: (partial) picture of $L$, Outside the shaded part, the three arcs are the same.}\label{pic:gamma_rl}\end{figure}

In the third case, using (D) and Reidemeister moves, 
\begin{eqnarray*}
\begin{array}{c}\includegraphics[scale=0.18]{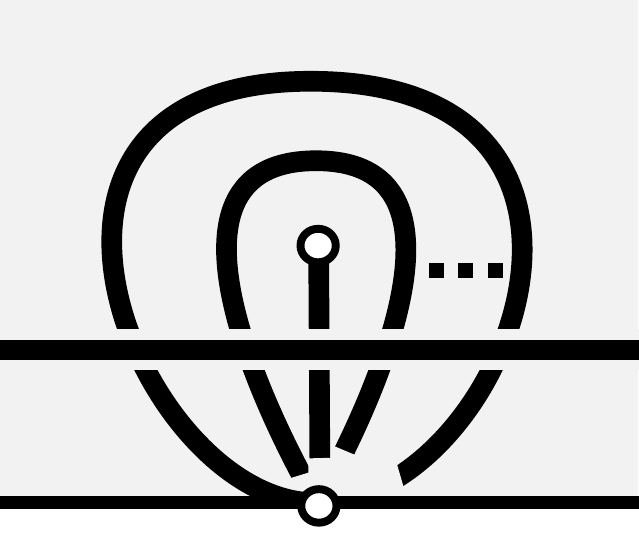}\end{array}&=&vq^{1/2}\begin{array}{c}\includegraphics[scale=0.18]{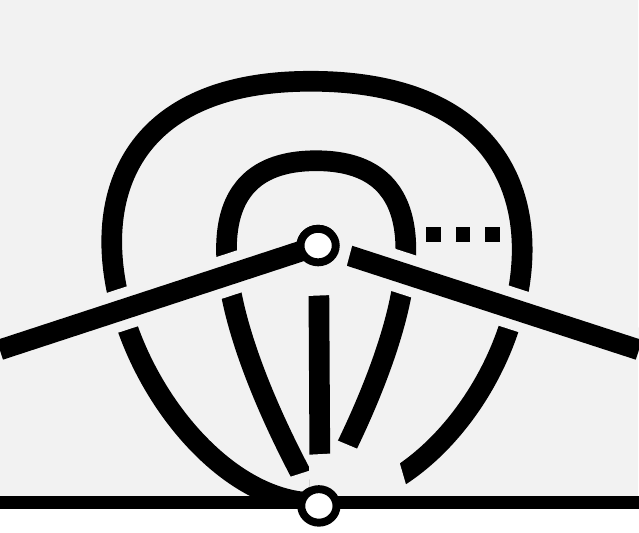}\end{array}-q\begin{array}{c}\includegraphics[scale=0.18]{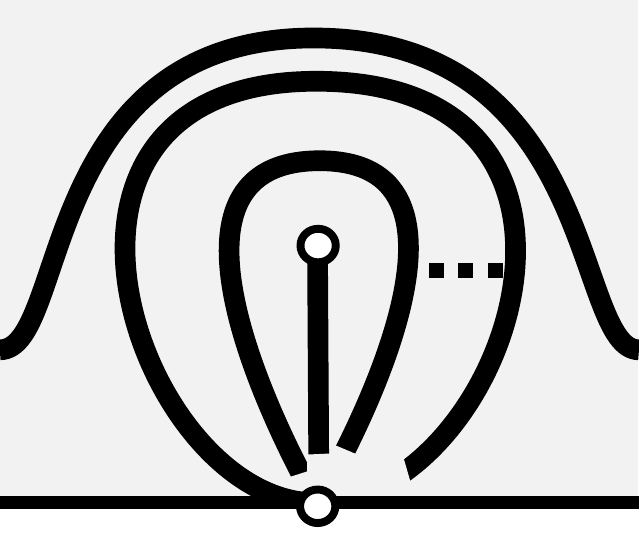}\end{array}\\
&=&vq^{1/2}\begin{array}{c}\includegraphics[scale=0.18]{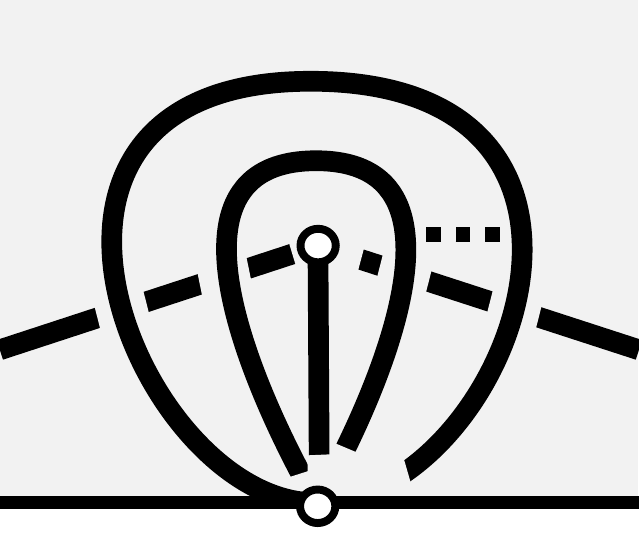}\end{array}-q\begin{array}{c}\includegraphics[scale=0.18]{draws/prod_arcs_odd_ho.pdf}\end{array}
=\begin{array}{c}\includegraphics[scale=0.18]{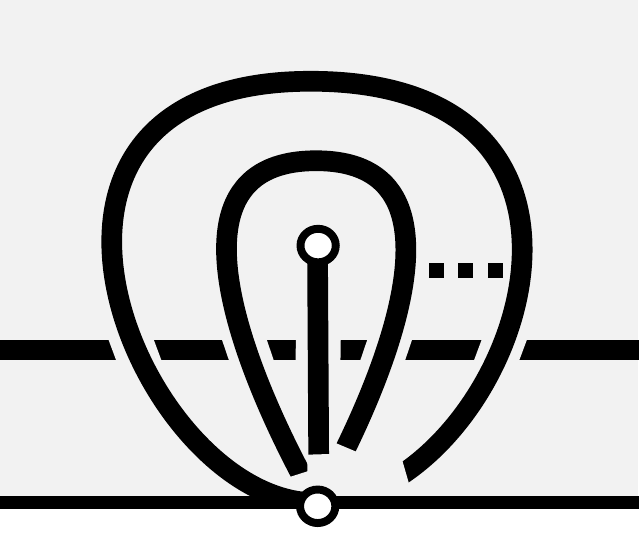}\end{array},
\end{eqnarray*}
where the heights of endpoints at the boundary marked point increase in counter-clockwise and the second equality holds from the second case. 

The above three cases show $[\beta^{n}]$ is `transparent.' Thus, for any  diagram $\gamma$, $[\beta^{n}]$ commutes with $\gamma$. Thus, we may conclude that $[\beta^{n}]$ is a central element. 
\end{proof}

Finally, we show the centrality of an element coming from an arc class connecting punctures.  For the degree $n$ Chebyshev polynomial $T_{n}(x)$, consider the formal expression 
\begin{equation*}
	\frac{1}{\sqrt{v}\sqrt{w}}T_{n}(\sqrt{v}\sqrt{w} x). 
\end{equation*}

\begin{remark}\label{rmk:Tnbetaisinthealgegbra}
Let $\beta$ be an arc class connecting two distinct interior punctures $v$ and $w$.
Even though $\cS_{q}^{\MRY}(\Sigma)$ does include the square roots of $v$ and $w$,  the arc class $\beta$ threaded by $\frac{1}{\sqrt{v}\sqrt{w}}T_{n}(\sqrt{v}\sqrt{w} x)$  remains an element of $\cS_{q}^{\MRY}(\Sigma)$. This is because $T_{n}(x) $ has only odd degree terms, thus every power of $v$ and $w$ are integers, not half integers. In particular, one can check that 
\[
	\frac{1}{\sqrt{v}\sqrt{w}}T_{3}(\sqrt{v}\sqrt{w} x) = vw x^{3} - 3 x, \quad
	\frac{1}{\sqrt{v}\sqrt{w}}T_{5}(\sqrt{v}\sqrt{w}x ) = v^{2}w^{2} x^{5} - 5vwx ^{3} + 5 x,\quad  \cdots
\]
So, for example,  $\beta$ threaded by $\frac{1}{\sqrt{v}\sqrt{w}}T_{3}(\sqrt{v}\sqrt{w} x)$ is the skein $vw \beta^{(3)} - 3  \beta$.  
\end{remark}

\begin{remark} When $\beta$ whose diagram does not admit any self-intersections, then $\beta$ threaded by $\frac{1}{\sqrt{v}\sqrt{w}}T_{n}(\sqrt{v}\sqrt{w} x)$ is identical to the skein $\frac{1}{\sqrt{v}\sqrt{w}}T_{n}(\sqrt{v}\sqrt{w} \beta)$.  However, this is not in general true for diagrams with self-intersection. 
\end{remark}

\begin{remark}\label{rmk:homogenization}
We may also understand the element  $\frac{1}{\sqrt{v}\sqrt{w}}T_{n}(\sqrt{v}\sqrt{w} \beta)$ as a homogenization of $T_{n}(\beta)$ with respect to the $\ZZ^{V_{\circ}}$-grading. 
\end{remark}

\begin{lemma}\label{lem:interiorarcs}
Let $\beta$ be an arc class connecting two distinct interior punctures $v$ and $w$, without any self-intersection on its diagram. Then $\beta$ threaded by the polynomial $\frac{1}{\sqrt{v}\sqrt{w}}T_{n}(\sqrt{v}\sqrt{w} x)$ is central. 
\end{lemma}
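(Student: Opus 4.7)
Write $P(x) = \frac{1}{\sqrt{v}\sqrt{w}}T_n(\sqrt{v}\sqrt{w}x)$. Since $T_n$ contains only odd-degree monomials when $n$ is odd, expanding gives $P(x) = \sum_{k \text{ odd}} a_k (vw)^{(k-1)/2} x^k$, where $T_n(x) = \sum_k a_k x^k$. Thus the threaded element
\[
	\widetilde{\beta} \;:=\; \beta^P \;=\; \sum_{k \text{ odd}} a_k (vw)^{(k-1)/2} \beta^{(k)}
\]
is a genuine element of $\cS_q^{\MRY}(\Sigma)$, where $\beta^{(k)}$ denotes the $V$-tangle given by $k$ parallel copies of the simple arc $\beta$. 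The plan is to verify that $\widetilde{\beta}$ commutes with every generating $V$-tangle $\gamma$.

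After isotoping $\gamma$ to general position with respect to $\beta$ and using relation (A) to resolve any crossings of $\gamma$ with $\beta^{(k)}$ outside of neighborhoods of $v$ and $w$, I reduce to checking commutativity in three local cases: (i) $\gamma$ is disjoint from $\beta$, which is immediate; (ii) $\gamma$ crosses $\beta$ transversally at a single point in $\Int\Sigma \setminus V$; and (iii) $\gamma$ has an endpoint at $v$ or $w$. For case (ii), I would follow the classical Chebyshev-transparency argument of \cite{BW16, Le15}: when a strand of $\gamma$ is pushed across the $k$ parallel strands of $\beta^{(k)}$, each strand acquires a factor $q^{\pm 2j}$, and the combinatorial identity satisfied by $T_n$ at a primitive $n$-th root of unity forces the weighted sum to collapse. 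The scalars $a_k (vw)^{(k-1)/2}$ sit outside the skein combinatorics and do not interfere with this cancellation.

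Case (iii) is the heart of the argument. By symmetry I may assume $\gamma$ has an endpoint at $v$. I compute both $\widetilde{\beta}\cdot \gamma$ and $\gamma \cdot \widetilde{\beta}$ by applying relation (D) at $v$ to each of the $k$ crossings that occur in a neighborhood of $v$, as in the proof of Lemma \ref{lem:arcboundaryinterior}. Each application of (D) introduces a factor of $v^{-1}$ along with a $q^{\pm 1/2}$-weighted sum of two resolutions, and iterating produces a total factor of $v^{-(k-1)/2}$ multiplied by a scalar $q^{\pm k(k-1)/4}$ once a resolved strand is absorbed into the single surviving $\beta$-arc. The factor $(vw)^{(k-1)/2}$ from the threading precisely clears $v^{-(k-1)/2}$, leaving a clean term of the shape $w^{(k-1)/2}\beta^{(k-1)}$ flanked on one side by a $V$-tangle $R$ (to the right of $\gamma$) and on the other by $L$, in analogy with the ``$L$ and $R$'' setup of Lemma \ref{lem:arcboundaryinterior}. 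Assembling the differences $\widetilde{\beta}\gamma - \gamma\widetilde{\beta}$ via the Chebyshev recursion $T_{k+2}(x) = xT_{k+1}(x) - T_k(x)$, the total reduces to a multiple of $q^{n/2} - q^{-n/2}$, which vanishes because $q^n = 1$ with $n$ odd.

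The main obstacle I foresee is the simultaneous bookkeeping in case (iii) of (a) the $v^{-1}, w^{-1}$ factors from (D), (b) the compensating $(vw)^{(k-1)/2}$ factors from the threading, (c) the half-integer $q$-weights from (D), and (d) the Chebyshev coefficients $a_k$; ensuring that these conspire so that the discrepancy collapses to a multiple of $q^{n/2} - q^{-n/2}$ requires a careful induction on $k$. A natural alternative would be to embed $\cS_q^{\MRY}(\Sigma) \hookrightarrow \cS_q^{\LRY}(\Sigma)$ via $m$ and exploit the Chebyshev--Frobenius framework \eqref{eqn:Chebyshevmap}; however, $\Phi$ has not been defined on arcs with interior puncture endpoints, so this route would ultimately reduce to the same direct skein computation.
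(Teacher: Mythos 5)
Your high-level strategy — verify transparency of the threaded arc by a local skein computation at (i) interior crossings away from $V$, (ii) a single transversal interior crossing, and (iii) a shared endpoint at $v$ or $w$ — is exactly the strategy carried out in detail in Karuo's earlier paper \cite[Section 5]{Kar24}, which the paper simply invokes: the paper's proof cites \cite[Corollary 5.3]{Kar24}, observes that that reference established centrality in the degenerated algebra where every $v_i$ is set equal to $1$, and argues that the $\sqrt{v}\sqrt{w}$ normalization in the threading polynomial is precisely what allows the same puncture-skein resolutions (relation (D)) to go through in $\cS_q^{\MRY}(\Sigma)$: each time two of the parallel strands of $\beta^{(k)}$ meet at $v$, the resolution introduces a $v^{-1}$ that is absorbed by $(\sqrt{v})^2 = v$ supplied by the threading. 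So you are re-deriving rather than citing, which is fine in principle.

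The genuine gap is that your case (iii) is a description of what the computation \emph{ought} to produce, not a verification. You acknowledge this yourself, but it deserves to be called out more sharply than ``requires a careful induction on $k$.'' Two specific points. First, the scalar bookkeeping you propose ($v^{-(k-1)/2}$, $q^{\pm k(k-1)/4}$) is borrowed from the proof of Lemma~\ref{lem:arcboundaryinterior}, where the threaded polynomial is the monomial $x^n$ and there is a \emph{single} picture $[\beta^n]$ after resolution. Here the threading polynomial is $T_n$, a sum of $(n+1)/2$ monomials, and the resolutions of $\beta^{(k)} \gamma$ near $v$ for different $k$ do not all land on the same reduced diagram, so one cannot simply factor a single $q$-power out front; the cross terms between different $k$ must themselves cancel. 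Second, and more importantly, your final assertion that ``the total reduces to a multiple of $q^{n/2} - q^{-n/2}$'' imports the vanishing mechanism of Lemma~\ref{lem:arcboundaryinterior}, but that mechanism is specific to the monomial $x^n$. The Chebyshev vanishing is a different, multi-term cancellation (the Bonahon--Wong ``miraculous cancellation'' adapted to arcs, as in \cite[Section 5]{Kar24}), in which the recursion $T_{k+2} = xT_{k+1} - T_k$ interacts with the $q^{\pm 1/2}$ weights from relation (D) at every stage, not just at $k = n$. Without carrying out this induction explicitly one cannot conclude that the commutator collapses; as written the proposal has asserted the punchline rather than proved it. Your observation that the Chebyshev--Frobenius map does not extend to arcs ending at interior punctures, and hence cannot be used to shortcut the argument, is correct and matches the paper's remark following Lemma~\ref{lem:interiorarcs}.
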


\begin{proof}
The centrality was first observed in \cite[Corollary 5.3]{Kar24}. In \cite{Kar24}, it was calculated in a degenerated version of the skein algebra that one can obtain by setting $v = 1$ for all vertices. To make the same proof work in $\cS_{q}^{\MRY}(\Sigma)$, we need to allow a resolution using the puncture-skein relation whenever there are two arc classes meeting at the same interior puncture. Multiplying $\sqrt{v}\sqrt{w}$ on $\beta$ enables us that such a resolution is possible, because whenever we have two arc components meeting at $v$, we will have $(\sqrt{v})^{2} = v$. Using exactly the same computation with \cite[Section 5]{Kar24}, we obtain the centrality. 
\end{proof}

\begin{remark}
We do not need to consider the case that $\beta$ is an arc whose two ends are the same interior puncture $v$. This is because after multiplying by  $v$, then $v\beta$ is the sum of two loops, hence it is in $\mathrm{im}\; \Phi$. 
\end{remark}

\begin{remark}
We observed that the Chebyshev-Frobenius morphism $\Phi : \cS_{1}^{\rL}(\Sigma) \to \cS_{q}^{\rL}(\Sigma)$ does not extend to $\Phi : \cS_{1}^{\MRY}(\Sigma) \to \cS_{q}^{\MRY}(\Sigma)$ as an algebra homomorphism. Using the basis $\RMC$, we obtain an extended linear map $\Phi : \cS_{1}^{\MRY}(\Sigma) \to \cS_{q}^{\MRY}(\Sigma)$, but it does not preserve the skein relations. 
\end{remark}

%%%%%%%%%%%%%%%%%%%%%%%%%%%%%%%%%%%%

\section{Characterization of the center}\label{sec:centercomputation}

In this section, we prove Theorem \ref{thm:mainthm}.    
The outline of the proof is as follows: In  Section \ref{sec:central}, we showed that the elements in the statement of Theorem \ref{thm:mainthm} are central. To show that they generate the center, we will follow a strategy adopted by Frohman, Kania-Bartoszynska, L\^e \cite{FKBL19}, Korinman \cite{Kor21}, and Yu \cite{Yu23}. Suppose that we have a total term order on the set $\RMC$ of reduced multicurves, which behaves nicely with respect to the multiplication. For any central element $z$, we show that the leading term of $z$ is equal to the leading term of an element $z'$ that can be described in terms of the elements in Theorem \ref{thm:mainthm}. Then we may apply the induction. To realize this strategy, in Section \ref{ssec:coordinates}, we introduce a generalized version of the well-known edge coordinates allowing us to define the coordinates for ideal arcs as well.

\subsection{Generalized edge coordinates for $\mathsf{B}(\Sigma,V)$ and $\RMC$}\label{ssec:coordinates}
We fix an ideal triangulation $\Delta$ on $\Sigma$, where $\Delta$ may admit self-glued triangles. Recall that $V = V_{\circ} \sqcup V_{\partial}$ is the set of vertices in $\Delta$ and $E$ is the set of edges in $\Delta$. We say that an edge $e$ is a boundary edge if $e \subseteq \partial \Sigma$ and is an interior edge otherwise. 
By a corner, we mean a pair $(T, \{e_{1}, e_{2}\})$ where $T$ is a triangle in $\Delta$ and $e_{1}, e_{2}$ are two edges of $T$, and let the set of all corners be denoted by $C$.

Recall that $\cS_{q}^{\LRY}(\Sigma)$ has basis $\mathsf{B}(\Sigma,V)$ consisting of increasingly stated, positively ordered simple tangles. Up to isotopy, we may assume that a simple tangle diagram $\alpha \in \mathsf{B}(\Sigma,V)$ is in normal position with respect to $\Delta$, and there is no turning back over each edge $e \in E$. Then the \emph{edge coordinate} $f(\alpha) \in \ZZ^{E}$ is defined by $f(\alpha)(e)= |\alpha \cap e|$, the intersection number between $\alpha$ and $e$. Since any diagram $\alpha$ does not intersect any vertices in $V$, it also admits a well-defined \emph{corner coordinate} $g(\alpha) \in \ZZ^{C}$ that counts the number of components around each corner $c \in C$. These two coordinate systems are equivalent -- there is an explicit coordinate change formula. See \cite[Section 3.2]{Mat07}. Once one imposes a total order on $E$, we may induce a lexicographical order on $\ZZ^{E}$. This provides an order on $\mathsf{B}(\Sigma, V)$. This order is nearly a total order -- the only non-distinguishable elements are the same topological simple tangle diagrams with two different states.

Now, we move to $\cS_{q}^{\MRY}(\Sigma)$. For $\alpha \in \RMC$, we define edge coordinates $f(\alpha) \in \frac{1}{2}\ZZ^E$ as follows, using the square trick. We fix a term order on $E$, and hence on $\ZZ^{E}$. Suppose that $\alpha \in \RMC$ does not intersect any interior puncture, hence $\deg_{V}(\alpha) = \mathbf{0}$. Then applying the moving trick, we may identify $\cS_{q}^{\MRY}(\Sigma)$ with a subalgebra of $\cS_{q}^{\LRY}(\Sigma)$ (Proposition \ref{prop:MRYvsLRY}). In particular, $\alpha$ can be identified with a curve in $\cS_{q}^{\LRY}(\Sigma)$ without any intersection with interior punctures or boundary marked points with $+$ states. Thus, we obtain well-defined edge coordinates.

If $\alpha \in \RMC$ but $\deg_{V}(\alpha) \ne \mathbf{0}$, then because at most one  $\alpha \in \RMC$ ends at any interior bundary puncture, $\deg_{V}(\alpha) \in \sum_{v \in W}\be_{v}$ for some subset $W \subset V_{\circ}$. So if we take $\alpha^{2}\prod_{v \in W}v$, it is a linear combination of reduced multicurves with $\deg_{V} = \mathbf{0}$.  Thus for each term, the edge coordinate is well-defined. We denote $\lt_{f}(\alpha^{2}\prod_{v \in W}v)$ by the leading term of $\{f(\alpha_{i})\} \in \ZZ^{E}$ where $\{\alpha_{i}\}$ is the set of reduced multicurves that appear on $\alpha^{2}\prod_{v \in W}v$. We may define the generalized edge coordinate as
\begin{equation}\label{eqn:generalizededgecoordinates}
	f(\alpha) := \frac{1}{2}\lt_{f}(\alpha^{2}\prod_{v \in W}v) \in \frac{1}{2}\ZZ^{E}.
\end{equation}

\begin{remark}\label{rmk:formula}
We have an explicit formula for $f(\alpha)$. Note that if $\alpha, \alpha' \in \RMC$ are disjoint, then $f(\alpha\alpha') = f(\alpha) + f(\alpha')$. Thus, it is sufficient to describe the formula for arc classes ending at an interior puncture. The case of a loop class is classical.

Suppose that $\alpha$ is an arc class connecting two distinct punctures $v, w \in V_{\circ}$. Let $E(v)$ be the set of edges adjacent to $v$. Then 
\[
	f(\alpha) = \begin{cases}
	\frac{1}{2}\left(\sum_{t \in E(v)}\be_{t} + \sum_{t \in E(w)}\be_{t}\right) + \sum_{p \in t \cap \alpha}\be_{t}, &\mbox{if $\alpha$ is not an edge in $\Delta$,}\\
	\frac{1}{2}\left(\sum_{t \in E(v)}\be_{t} + \sum_{t \in E(w)}\be_{t}\right) - \be_{s}, &\mbox{if $\alpha$ is an edge $s \in \Delta$.}
	\end{cases}
\]
On the other hand, suppose that $\alpha$ is an arc class connecting one puncture $v$ and a marked point $w$. Let $\underline{\alpha}$ be a topological arc connecting $v$ and a boundary component near $w$, obtained by applying the moving trick. Then 
\[
	f(\alpha) = \frac{1}{2}\sum_{t \in E(v)}\be_{t} + \sum_{p \in t \cap \underline{\alpha}}\be_{t}.
\]

We may interpret the formula like this: $f(\alpha)$ is obtained from the `ordinary edge coordinates' $\sum_{p \in t \cap \underline{\alpha}}\be_{t}$ by modifying it with the contribution of each interior puncture. 
\end{remark}

\begin{remark}\label{rem:cornerisforsquare}
We may also define the generalized corner coordinate for $\alpha \in \RMC$ as in \cite[Section 6]{MW21}. But it does not have a simple computational formula, so we do not use it here. In the proof in the next section, we use both edge coordinates and corner coordinates. However, corner coordinates are used only for the curve classes \emph{without intersecting any marked points}. 
\end{remark}

\subsection{Proof of Theorem \ref{thm:mainthm}}\label{ssec:proofA}
We fix an ideal triangulation $\Delta$ on $\Sigma$. Recall that $V$ is the set of vertices in $\Delta$ (hence equal to the set of marked points), $V_{\circ}$ is the set of interior punctures, and $E$ is the set of edges in $\Delta$. We fix a total term order on $E$. Then via the generalized edge coordinate we introduced in Section \ref{ssec:coordinates}, we obtain the total order on the set $\RMC$.

The goal of this section is to prove that any central element $z \in Z(\cS_{q}^{\MRY}(\Sigma))$ can be given by the central elements in Section \ref{sec:central}. We first show that we may replace an arbitrary $z$ by a simpler element.

\begin{lemma}\label{lem:gradedpiecesarecentral}
Let $R = \bigoplus_{\bn \in \ZZ^{k}}R_{\bn}$ be a $\ZZ^{k}$-graded algebra and $z = \bigoplus_{\bn}z_{\bn} \in R$ with $z_{\bn} \in R_{\bn}$. Then $z \in Z(R)$ if and only if $z_{\bn} \in Z(R)$ for all $\bn \in \ZZ^{k}$.
\end{lemma}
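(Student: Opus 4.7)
The plan is to prove the two directions separately, with the forward direction being essentially immediate and the reverse direction being a standard degree-comparison argument.

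For the easy direction, if every $z_{\bn}$ lies in $Z(R)$, then $z = \sum_{\bn} z_{\bn}$ is a finite sum (in any given application to an element, only finitely many components will contribute) of central elements, hence central. This uses only that $Z(R)$ is a $\CC$-subspace of $R$.

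For the nontrivial direction, I would fix $z \in Z(R)$ and a fixed multi-index $\bn_{0}$, and aim to show $z_{\bn_{0}} r = r z_{\bn_{0}}$ for every $r \in R$. Since $R$ is spanned (as an abelian group) by its homogeneous components, it suffices to verify this on homogeneous elements $r_{\bm} \in R_{\bm}$. Starting from $zr_{\bm} = r_{\bm}z$ and writing $z = \sum_{\bn} z_{\bn}$, I would decompose both sides into their graded components. The key observation is that $z_{\bn}r_{\bm}$ and $r_{\bm}z_{\bn}$ both lie in $R_{\bn + \bm}$, so the equation $\sum_{\bn}(z_{\bn} r_{\bm} - r_{\bm} z_{\bn}) = 0$ is a sum of elements in pairwise distinct graded pieces. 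By the uniqueness of graded decomposition (the defining property of the direct sum), each summand must vanish individually, giving $z_{\bn} r_{\bm} = r_{\bm} z_{\bn}$ for every $\bn$; in particular for $\bn = \bn_{0}$.

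There is no real obstacle here; the only thing to be slightly careful about is the bookkeeping that $z$ has only finitely many nonzero graded components (which holds by definition of $\bigoplus$), so all sums involved are finite and the graded-decomposition argument applies without any convergence concerns. This lemma will then be applied to the specific $\ZZ^{V_{\circ}}$-grading on $\cS_{q}^{\MRY}(\Sigma)$ constructed in Section~\ref{ssec:gradedalgebra}, reducing the classification of central elements to the case of homogeneous central elements with respect to the puncture multi-degree.
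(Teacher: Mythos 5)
Your proposal is correct and follows essentially the same argument as the paper: the forward direction is immediate from linearity of the commutator, and the reverse direction uses the observation that the graded components $z_{\bn}r_{\bm} - r_{\bm}z_{\bn}$ live in pairwise distinct pieces $R_{\bn+\bm}$, so each must vanish by uniqueness of the graded decomposition. The only cosmetic difference is that you isolate a fixed $\bn_{0}$ and emphasize the finiteness bookkeeping, whereas the paper simply concludes all components vanish at once.
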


\begin{proof}
Suppose $z_{\bn} \in Z(R)$ for all $\bn \in \ZZ^{k}$. Then for any $z' \in R$, $zz' = (\sum z_{\bn})z' = \sum z_{\bn}z' = \sum z'z_{\bn} = z'(\sum z_{\bn}) = z'z$ and $z \in Z(R)$.

Conversely, assume $z \in Z(R)$. Now take a homogeneous $z' \in R_{\bm}$. Then $0 = zz' - z'z = (\sum z_{\bn})z' - z'(\sum z_{\bn}) = \sum (z_{\bn}z' - z'z_{\bn})$. Since all $z_{\bn}z' - z'z_{\bn}$ are in the different graded parts, $z_{\bn}z' - z'z_{\bn} = 0$. Thus, $z_{\bn}$ is commutative with $z'$ for all $\bn$. Any element $z' \in R$ is a finite sum of homogeneous elements. Thus, we can conclude $z_{\bn} \in Z(R)$ for all $\bn$. 
\end{proof}

Note that $\cS_{q}^{\MRY}(\Sigma)$ is a $\ZZ^{V_{\circ}}$-graded algebra. For a central element $z \in Z(\cS_{q}^{\MRY}(\Sigma))$, it is sufficient to show that each graded piece can be generated by the central elements in Section \ref{sec:central}. So from now on, we will assume that $z$ is homogeneous with respect to the $\ZZ^{V_{\circ}}$-grading.

The next lemma tells us that we may assume that there is no contribution of vertex classes on $z$.

\begin{lemma}\label{lem:novertices}
Let $z \in Z(\cS_{q}^{\MRY}(\Sigma))$ be a homogeneous element with respect to the degree $\deg_{V}$. Then after multiplying by a Laurent monomial $m(v)$ in vertex classes, $z$ is a $\CC$-linear combination of diagrams in $\RMC$. In particular, $\deg_{V} z = \sum_{v \in W}\be_{v}$ for some $W \subset V_{\circ}$. 
\end{lemma}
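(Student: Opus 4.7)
The plan is to unpack the $\ZZ^{V_\circ}$-grading of \eqref{eqn:gradedringstructure1} against the free basis of $\cS_q^{\MRY}(\Sigma)$ provided by Proposition \ref{prop:basis}, and to observe that homogeneity, combined with a simple parity argument on the vertex exponents, severely restricts the coefficients that can appear. The centrality of $z$ is not needed here; only the homogeneity assumption is used.

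The first step is a structural observation about $\RMC$. Condition (1) of Definition \ref{def:RMC} says that a reduced multicurve has no intersection in $\Int \Sigma$, including at the interior punctures $V_\circ$. Therefore at every interior puncture $v \in V_\circ$ there is at most one endpoint of $\alpha$ (in particular, $\alpha$ cannot contain an arc joining $v$ to itself, and two distinct components cannot share an endpoint at $v$). Consequently, for any $\alpha \in \RMC$,
\[
    \deg_V(\alpha) \;=\; \sum_{v \in W(\alpha)} \be_v, \qquad W(\alpha) := \{\, v \in V_\circ : \text{some component of } \alpha \text{ ends at } v\,\} \subseteq V_\circ.
\]
In particular, $\deg_V(\alpha) \in \{0,1\}^{V_\circ}$.

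Next, by Proposition \ref{prop:basis} I would uniquely write $z = \sum_i c_i \alpha_i$ with distinct $\alpha_i \in \RMC$ and nonzero $c_i \in \CC[v_i^{\pm}]$. Since $\deg_V(v_j^{\pm 1}) = \mp 2\be_{v_j}$, the map $\deg_V$ separates distinct Laurent monomials in the vertex variables. Hence a nonzero element of $\CC[v_i^{\pm}]$ is homogeneous for $\deg_V$ if and only if it is a single Laurent monomial. Suppose $z$ is homogeneous of degree $\bn = (n_v)_{v \in V_\circ}$. Each summand $c_i \alpha_i$ has its own well-defined degree, and these summands lie in distinct graded pieces for distinct choices of monomial coefficient. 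Comparing degrees forces each $c_i$ to itself be homogeneous, hence a Laurent monomial $c_i = a_i \prod_v v^{k_v^{(i)}}$ with $a_i \in \CC^*$.

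The final step is to show the monomial factors are actually independent of $i$. From $\deg_V(c_i \alpha_i) = \bn$ and the formula for $\deg_V(\alpha_i)$ above, the $v$-coordinate gives
\[
    n_v \;=\; \mathbf{1}_{v \in W(\alpha_i)} \;-\; 2\, k_v^{(i)} \qquad \text{for every } v \in V_\circ.
\]
Reducing mod $2$ yields $\mathbf{1}_{v \in W(\alpha_i)} \equiv n_v \pmod 2$, so $W(\alpha_i) = W := \{\, v \in V_\circ : n_v \text{ odd}\,\}$ is \emph{the same subset} for every $i$ appearing in $z$, and then $k_v^{(i)} = (\mathbf{1}_{v \in W} - n_v)/2$ is likewise independent of $i$. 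Setting $m(v) := \prod_v v^{-(\mathbf{1}_{v \in W} - n_v)/2}$, we obtain $m(v)\cdot z = \sum_i a_i \alpha_i \in \mathrm{span}_\CC(\RMC)$, and this element has degree $\sum_{v \in W} \be_v$, as claimed.

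There is no real obstacle: the only subtle point is the first step, namely noticing that Definition \ref{def:RMC}(1) forces $\deg_V(\alpha) \in \{0,1\}^{V_\circ}$ rather than merely in $\ZZ^{V_\circ}_{\ge 0}$, and everything else is a routine parity bookkeeping over the free basis $\RMC$.
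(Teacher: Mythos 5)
Your proof is correct and takes essentially the same approach as the paper: both expand $z$ in the $\RMC$-basis, use that $\deg_V(\alpha) \in \{0,1\}^{V_\circ}$ for $\alpha \in \RMC$ while $\deg_V$ of a vertex Laurent monomial lies in $2\ZZ^{V_\circ}$, and then conclude by the parity argument that the monomial factors must coincide across all terms. Your write-up spells out the parity bookkeeping a bit more explicitly than the paper, but the underlying idea is identical.
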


\begin{proof}
 Let $\deg_{V}(z) = \sum a_{v}\be_{v}$. Then $z$ can be written uniquely as a linear combination $\sum c_{i}m_{i}(v)\alpha_{i}$, where $c_{i} \in \CC$, $\alpha_{i} \in \RMC$, and $m_{i}(v)$ is a Laurent monomial with respect to the vertex classes. Since each $\alpha_{i}$ is in $\RMC$, for each interior puncture $v$, there must be at most one arc component ending at $v$ and the degree is one or zero at $v$. So $\deg_{V}(\alpha_{i}) \in \{0, 1\}^{V_{\circ}}$. On the other hand, $\deg_{V}(m_{i}(v)) \in 2\ZZ^{V_{\circ}}$. Therefore, all $\alpha_{i}$ must have the same degree. This implies that $m_{i}(v)$ are all identical. Hence by multiplying $m_{i}(v)^{-1}$, we may assume that $z = \sum c_{i}\alpha_{i}$ and $\deg_{V} z = \sum_{v \in W}\be_{v}$ for some $W \subset V_{\circ}$. 
\end{proof}

\begin{lemma}\label{lem:GECisinjective}
Let $f : \RMC \to \frac{1}{2}\ZZ^{E}$ be the generalized edge coordinate function defined in Section \ref{ssec:coordinates}. If $\alpha, \alpha' \in \RMC$ and $f(\alpha) = f(\alpha')$, then $\alpha = \alpha'$. In other words, the edge coordinate defines the curve $\alpha \in \RMC$ uniquely.
\end{lemma}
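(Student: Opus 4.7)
The plan is to reconstruct $\alpha \in \RMC$ from its generalized edge coordinate $f(\alpha)$ by first determining the multiset $W_\alpha \subset V_\circ$ of interior puncture endpoints of $\alpha$, and then invoking classical injectivity of edge coordinates on simple multicurves once the puncture contribution has been subtracted off. The key identity is the explicit formula of Remark \ref{rmk:formula}, which decomposes
\[
f(\alpha) = \tfrac{1}{2}\sum_{v \in W_\alpha} \sum_{t \in E(v)}\be_t \;+\; \sum_{p \in t \cap \underline{\alpha}}\be_t,
\]
where $\underline{\alpha}$ is the simple multicurve obtained from $\alpha$ by pushing each interior-puncture endpoint off to a nearby boundary point via the moving trick. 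The first summand is the "half-integer puncture contribution'' and the second is a genuine integer vector recording intersection counts with the edges.

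First, I will recover $W_\alpha$ from $f(\alpha)$ by a parity argument. Reducing $2f(\alpha)$ modulo $2$, each edge $e$ with endpoints $v_1,v_2$ satisfies $2f(\alpha)(e) \equiv |\{v_i \in W_\alpha \cap V_\circ\}| \pmod{2}$. Thus the $\FF_2$-reduction of $2f(\alpha)$ is the image of the indicator vector $\chi_{W_\alpha} \in \FF_2^{V_\circ}$ under the edge-puncture incidence map $A\colon \FF_2^{V_\circ} \to \FF_2^E$. For a generic ideal triangulation, $A$ is injective (each interior puncture has sufficiently many edges not entirely absorbed by incidence with other interior punctures), so $W_\alpha$ is determined outright. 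For the degenerate triangulations in which $A$ has nontrivial kernel, I would combine this parity information with the $\ZZ^{V_\circ}$-grading $\deg_V$ (recall from Lemma \ref{lem:novertices} that within a single homogeneous graded piece we have $\deg_V(\alpha)=\sum_{v \in W_\alpha}\be_v$) and with the magnitudes $2f(\alpha)(e)$ themselves to pin down $W_\alpha$ uniquely.

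Once $W_\alpha$ is known, subtracting the puncture contribution yields the integer vector
\[
2f(\alpha) - \sum_{v \in W_\alpha}\sum_{t \in E(v)}\be_t = 2\sum_{p \in t \cap \underline{\alpha}}\be_t,
\]
which is twice the classical edge coordinate of $\underline{\alpha}$. Viewing $\underline{\alpha}$ as a positively stated simple $\partial$-tangle diagram in $\mathsf{B}(\Sigma,V\cup\{\text{pushed-off points}\})$ via Proposition~\ref{prop:MRYvsLRY}, the classical normal-form argument triangle-by-triangle (see \cite[Section 3.2]{Mat07}) shows $\underline{\alpha}$ is uniquely determined by these edge coordinates up to state, and all states are $+$ here. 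Finally, pulling the boundary endpoints of $\underline{\alpha}$ back to the corresponding elements of $W_\alpha$ reconstructs $\alpha$ itself.

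The main obstacle is the first step: the failure of $A$ to be injective in some triangulations (e.g.\ a genus-one surface with two punctures all of whose triangulation edges connect them) means the parity reduction alone does not separate certain $W$'s. I expect the cleanest workaround is to note that the generalized corner coordinates (alluded to in Remark \ref{rem:cornerisforsquare}) on the non-puncture-incident portions of $\alpha$ give an independent set of invariants which, combined with the parity data and the grading $\deg_V$, exhaust all ambiguities. The remaining two steps are comparatively routine, reducing to the classical theory cited from \cite{Mat07} once the puncture data are under control.
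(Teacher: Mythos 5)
Your proposal takes a genuinely different route from the paper's: you try to decompose $f(\alpha)$ explicitly via the formula of Remark~\ref{rmk:formula} into a half-integer puncture contribution and an integer intersection-count contribution, recover the set $W_\alpha$ of puncture endpoints from the parity of $2f(\alpha)$, subtract it off, and fall back on classical injectivity of edge coordinates. You correctly flag that the parity step can fail when the edge--puncture incidence map $A\colon \FF_2^{V_\circ}\to\FF_2^{E}$ has nontrivial kernel, but the gap is considerably more pervasive than "degenerate triangulations": whenever $\partial\Sigma=\emptyset$, every edge of $\Delta$ has both endpoints at interior punctures, so $\chi_{V_\circ}\in\ker A$ for \emph{every} ideal triangulation of a closed punctured surface, and the parity vector can never distinguish $W$ from $V_\circ\setminus W$ — e.g.\ it cannot separate $W=\emptyset$ from $W=\{v_1,v_2\}$ on a twice-punctured torus. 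This is the default situation in the Roger--Yang setting, not an edge case. Moreover, the proposed fix is circular: you invoke $\deg_V(\alpha)=\sum_{v\in W_\alpha}\be_v$ to pin down $W_\alpha$, but that degree is exactly what you are trying to recover; it is not given along with $f(\alpha)$ in the lemma's hypotheses. (Lemma~\ref{lem:novertices} concerns central elements $z$, not an arbitrary $\alpha\in\RMC$.)

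The paper's proof avoids the decomposition entirely by working topologically. By the very definition~\eqref{eqn:generalizededgecoordinates}, the vector $2f(\alpha)=\lt_f(\alpha^2\prod_{v\in W_\alpha}v)$ is the honest edge-coordinate vector of a concrete $\deg_V=\mathbf{0}$ reduced multicurve $\mu$: squares of the loop and boundary-arc components of $\alpha$, together with the surrounding loop $p_\beta$ for each interior arc $\beta$ and the once-punctured-monogon boundary $r_\gamma$ for each puncture-to-boundary arc $\gamma$. Since corner coordinates uniquely determine $\deg_V=\mathbf{0}$ reduced multicurves, $2f(\alpha)$ determines $\mu$ on the nose, and $\alpha$ is then reconstructed by reversing the squaring recipe: the parities of the multiplicities of the $p_\beta$- and $r_\gamma$-type components of $\mu$ tell you which arcs to reinstate. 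The advantage of this route is that $\mu$ is an actual realizable simple multicurve, so its corner coordinates are constrained to be nonnegative integers; it is this topological constraint, not linear algebra over $\FF_2$, that rules out a loop-only $\alpha'$ with $f(\alpha')=f(\alpha)$ when $\alpha$ has arcs. Your argument discards exactly this constraint.

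A minor observation in your favor: in the proof of Theorem~\ref{thm:mainthm} the lemma is only invoked within a fixed $\deg_V$-homogeneous piece, where $W$ is known in advance; in that restricted setting your remaining steps would go through. But the lemma as stated is unrestricted, and your proof of it does not close the gap at step one.
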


\begin{proof}
Since the height information does not affect to show the claim, we ignore the information in the proof.

Let $\deg_{V}(\alpha) = \sum_{v \in W}\be_{v}$ for some $W \subset V_{\circ}$. The leading term $\lt(\alpha^{2}\prod_{v \in W}v)$ is obtained by the following recipe. For each component not meeting an interior puncture, square the component. For an arc component $\beta$ connecting two interior punctures, replace it by a loop $p_{\beta}$ surrounding $\beta$. For an arc component $\gamma$ connecting an interior puncture $v$ and a boundary marked point $w$, replace it by a boundary loop $r_{\gamma}$ of a once-punctured monogon surrounding $v$. By reversing this procedure, we have that the original curve $\alpha$ can be uniquely recovered from $\lt_{f}(\alpha^{2}\prod_{v \in W}v)$. From $f(\alpha) = f(\alpha')$, we know $\lt_{f}(\alpha^{2}\prod_{v \in W}v) = \lt_{f}(\alpha'^{2}\prod_{v \in W'}v)$. Since the corner coordinates for ordinary reduced multicurves uniquely recover the curve, we have $\lt_{f}(\alpha^{2}\prod_{v \in W}v) = \lt_{f}(\alpha'^{2}\prod_{v \in W'}v)$, hence $\alpha = \alpha'$. 
\end{proof}

Thus, we may define the total order on $\RMC$ by using $f : \RMC \to \frac{1}{2}\ZZ^{E}$ and any lexicographical order on $\frac{1}{2}\ZZ^{E}$.

\begin{lemma}\label{lem:edgecoordinequality}
Let $\alpha_{1}, \alpha_{2}  \in \RMC$ such that $\deg_{V}(\alpha_{1}) = \deg_{V}(\alpha_{2}) = \prod_{v \in W}\be_{v}$ for some $W \subset V_{\circ}$. Then for any resolution $\beta$ of $\alpha_{1}\alpha_{2}\prod_{v \in W}v$ using the relations (A) and (D) in Definition \ref{def:MRY}, 
\[
	f(\beta) \le f(\alpha_{1}) + f(\alpha_{2}).
\]
\end{lemma}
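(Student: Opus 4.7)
My approach is to control $f(\beta)$ via the decomposition of Remark \ref{rmk:formula}, which splits the generalized edge coordinate into a raw intersection count plus a correction coming from arc endpoints at interior punctures. For a curve $\gamma$ in normal position with respect to $\Delta$, write $f^{\mathrm{raw}}(\gamma)(t) := |\gamma \cap t|$ for $t \in E$ as the raw intersection count.

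For each $\alpha_i$ with $\deg_V(\alpha_i) = \sum_{v \in W}\be_v$, Remark \ref{rmk:formula} gives
\[
f(\alpha_i) \;\ge\; f^{\mathrm{raw}}(\alpha_i) + \tfrac{1}{2}\sum_{v \in W}\sum_{t \in E(v)}\be_t
\]
coordinate-wise, the only subtlety being a $-\be_s$ term when an arc component of $\alpha_i$ coincides with a triangulation edge $s$; this correction is dominated by the half-sum since $s \in E(v) \cap E(w)$ already contributes $\be_s$ to it. Summing the two contributions gives
\[
f(\alpha_1) + f(\alpha_2) \;\ge\; f^{\mathrm{raw}}(\alpha_1) + f^{\mathrm{raw}}(\alpha_2) + \sum_{v \in W}\sum_{t \in E(v)}\be_t.
\]

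I would then analyze the resolution procedure. The product $\alpha_1 \alpha_2 \prod_{v \in W} v$ is expanded via the skein relations: an (A)-smoothing at an interior crossing preserves $f^{\mathrm{raw}}$ coordinate-wise, since both smoothings cross the same edges of $\Delta$; a (D)-smoothing at each $v \in W$ uses its $v^{-1}$ coefficient to cancel a factor of $v$ from $\prod_{v \in W} v$, and joins the two arcs ending at $v$ into a single curve that gains at most one new intersection per edge in $E(v)$, since the ``over'' and ``under'' smoothings cross complementary subsets of edges in the triangle fan at $v$. Relations (B), (C), (E), (F) contribute only scalar factors or boundary-edge bookkeeping that does not affect $f^{\mathrm{raw}}$ in the interior. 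Accumulating these local contributions gives, for each resolution $\beta$,
\[
f^{\mathrm{raw}}(\beta) \;\le\; f^{\mathrm{raw}}(\alpha_1) + f^{\mathrm{raw}}(\alpha_2) + \sum_{v \in W}\sum_{t \in E(v)}\be_t.
\]
Since $\deg_V(\alpha_1) + \deg_V(\alpha_2) + \deg_V(\prod v) = \mathbf{0}$, each $\beta$ has $\deg_V(\beta) = \mathbf{0}$, so $\beta$ contains no arcs ending at interior punctures and Remark \ref{rmk:formula} reduces to $f(\beta) = f^{\mathrm{raw}}(\beta)$. Combining the two displays yields $f(\beta) \le f(\alpha_1) + f(\alpha_2)$.

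The main obstacle is the detailed local analysis of the (D)-resolution at each interior puncture $v$: one must verify carefully that each of the two smoothings picks up at most one intersection per edge in $E(v)$, by an explicit inspection of how the ``over'' and ``under'' arcs connect two boundary points on a small disk around $v$ containing the triangle fan at $v$. A secondary subtlety is that the edge-arc exceptional case of Remark \ref{rmk:formula} never breaks the inequality; this reduces to the elementary coordinate-wise observation that $\tfrac{1}{2}\bigl(\sum_{t \in E(v)}\be_t + \sum_{t \in E(w)}\be_t\bigr) \ge \be_s$ for any edge $s$ of $\Delta$ joining $v$ and $w$.
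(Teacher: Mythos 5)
The overall strategy here — control $f(\beta)$ by separating the raw intersection count from the puncture correction, and then tracking how each (D)-resolution at $v\in W$ can add at most one new transversal intersection per edge of $E(v)$ — is close in spirit to the paper's argument, which also does a local accounting near each interior puncture. However, your very first displayed inequality is false precisely in the edge-arc case that you flag as a ``subtlety.'' If $\alpha_i$ has an arc component that is a triangulation edge $s$ joining $v,w\in W$, then Remark~\ref{rmk:formula} gives
\[
f(\alpha_i)=f^{\mathrm{raw}}(\alpha_i)+\tfrac{1}{2}\sum_{u\in W}\sum_{t\in E(u)}\be_t-\be_s,
\]
so the $s$-coordinate of $f(\alpha_i)$ is \emph{strictly less} than that of $f^{\mathrm{raw}}(\alpha_i)+\tfrac12\sum\sum$, and the claimed ``$\ge$'' fails. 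Your justification (``the half-sum already contributes $\be_s$'') shows only that $\tfrac12\sum\sum-\be_s\ge 0$, i.e.\ $f(\alpha_i)\ge f^{\mathrm{raw}}(\alpha_i)$; it does not show the stronger inequality you wrote, which would require $-\be_s\ge 0$. The sign is reversed: the edge-arc correction makes $f(\alpha_1)+f(\alpha_2)$ smaller than the budget your Step~2 estimate produces, so combining the displays leaves you off by $\be_s$ (or by $2\be_s$ when both $\alpha_1$ and $\alpha_2$ run along $s$), and the desired conclusion $f(\beta)\le f(\alpha_1)+f(\alpha_2)$ does not follow.

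The gap is not merely cosmetic: to close it you must show that the resolutions themselves are correspondingly cheaper at the coordinate $s$, i.e.\ that your Step~2 bound is not tight there. This is exactly what the paper does in its Cases~2--4, by analyzing the partial resolutions simultaneously at \emph{both} endpoints $v$ and $w$ of $s$ and observing that the contributions combine so that the $s$-coordinate never exceeds $f(\alpha_1)(s)+f(\alpha_2)(s)$; in the extreme Case~4, the only nontrivial resolution is the boundary loop $p_s$ of a neighborhood of $s$, which has $s$-coordinate $0$. So a correct version of your argument would need to replace the uniform coordinate-wise bound in Step~1 by a case split on whether arc components coincide with triangulation edges, and then sharpen Step~2 accordingly — at which point you would essentially be reproducing the paper's case analysis rather than avoiding it. The remaining pieces of your outline (the (A)-smoothings do not increase $f^{\mathrm{raw}}$ after re-normalizing; the two (D)-smoothings at $v$ hit complementary subsets of $E(v)$; $\deg_V(\beta)=\mathbf{0}$ forces $f(\beta)=f^{\mathrm{raw}}(\beta)$) are sound, but they do not by themselves rescue the edge-arc case.
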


\begin{proof}
First, we take a `partial resolution' $\beta'$ of $\alpha_{1}\alpha_{2}\prod_{v \in W}v$, by choosing a resolution for each vertex $v \in W$. We show that $f(\beta') \le f(\alpha_{1}) + f(\alpha_{2})$. Note that the formula of $f(\beta')$ is obtained by looking at the contribution of its change on the neighbor of $v \in W$ ($E(v)$ in Remark \ref{rmk:formula}). We understood all curves after applying the moving trick, so none of the curves meet the boundary marked point.

We divide it into several cases.

\textsf{Case 1.} Near $v \in W$, $\alpha_{1}$ and $\alpha_{2}$ are not edges in $\Delta$. See Figure \ref{fig:comparison}. $f(\alpha_{1}) + f(\alpha_{2})$ provides one for each edge in $E(v)$, while each resolution contributes either 0 or 1 for each edge.

\textsf{Case 2.} Near $v \in W$, $\alpha_{1}$ is not an edge in $\Delta$, but $\alpha_{2}$ is an edge $s$ in $\Delta$. See Figure \ref{fig:comparison2}. In particular, one may see that in $f(\alpha_{1}) + f(\alpha_{2})$, the contribution of $E(v)$ of the edge coordinate for $s$ is $\frac{1}{2}$. On the other hand, for some resolution, the calculated intersection number along $s$ is (at most) one, so it looks larger than that of $f(\alpha_{1})+f(\alpha_{2})$. However, note that in this case, the other end $v'$ of $\alpha_{2}$ is also a marked point in $V$. If it is an interior puncture $v'$, then $v' \in W$. Since $\deg_{V}(\alpha_{1}) = \deg_{V}(\alpha_{2})$, the other end of $\alpha_{1}$ is at $v'$. Then by combining the contributions from $E(v)$ and $E(v')$, we still can obtain the desired inequality. If $v'$ is a boundary marked point, around $v'$, $mov(\alpha_{2})$ passes through the interior of an edge. See Figure \ref{fig:comparison5}. Then the calculation of the edge coordinate is reduced to \textsf{Case 1}.

\textsf{Case 3.} Near $v \in W$, both $\alpha_{1}$ and $\alpha_{2}$ are two distinct edges $s_{1}$, $s_{2}$ in $\Delta$. We may argue as in the previous case. See Figure \ref{fig:comparison3}.

\textsf{Case 4.} Near $v \in W$, $\alpha_{1}$, $\alpha_{2}$ are the same edge $s \in \Delta$. In this case, there is only one resolution that makes a non-periperal and non-contractible loop, and the contribution is equal to that of $f(\alpha_{1}) + f(\alpha_{2})$. Consult Figure \ref{fig:comparison4}.

Thus, for any case, $f(\beta') \le f(\alpha_{1}) + f(\alpha_{2})$. For any full resolution $\beta$ of $\beta'$ obtained by taking an interior resolution, we know $f(\beta) \le f(\beta')$. Thus, we obtain the desired result. 
\end{proof}

\begin{lemma}\label{lem:leadingterm}
Let $z = \sum c_{i}\alpha_{i}\in \cS_{q}^{\MRY}(\Sigma)$ be a homogeneous element of $\deg_{V}(z) = \sum_{v \in W}\be_{v}$ for some $W \subset V_{\circ}$ with a term order $f(\alpha_{1}) > f(\alpha_{2}) > \cdots > f(\alpha_{k})$. Then $\lt_{f}(z^{2}\prod_{v \in W}v) = \lt_{f}(\alpha_{1}^{2}\prod_{v \in W}v)$. 
\end{lemma}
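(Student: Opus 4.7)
The plan is to expand the square and bound each cross-term via Lemma \ref{lem:edgecoordinequality}. Since $z$ is $\deg_{V}$-homogeneous of degree $\sum_{v \in W}\be_{v}$ and the $\alpha_{i}$ belong to the basis $\RMC$, each $\alpha_{i}$ also has $V$-degree $\sum_{v \in W}\be_{v}$, and I would write
\[
z^{2}\prod_{v \in W}v \;=\; \sum_{i,j} c_{i}c_{j}\,\alpha_{i}\alpha_{j}\prod_{v \in W}v.
\]
At each $v \in W$, exactly one arc of $\alpha_{i}$ and one arc of $\alpha_{j}$ meet, so the puncture-skein relation (D) --- together with the extra factor $v$ that absorbs the $v^{-1}$ appearing in (D) --- resolves the pair at $v$ without leaving any residual negative power of $v$. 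After also applying Kauffman resolutions at the interior double points, each summand $\alpha_{i}\alpha_{j}\prod_{v \in W}v$ becomes a $\CC$-linear combination of reduced multicurves of $V$-degree $\mathbf{0}$, to which the ordinary edge-coordinate function $f$ applies directly.

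Next I would invoke Lemma \ref{lem:edgecoordinequality} to conclude that every reduced multicurve $\beta$ appearing in the resolution of $\alpha_{i}\alpha_{j}\prod_{v \in W}v$ satisfies $f(\beta) \le f(\alpha_{i}) + f(\alpha_{j})$. Because the total order on $\frac{1}{2}\ZZ^{E}$ is translation-invariant and the given ordering of the $\alpha_{i}$ is strict, whenever $(i,j) \ne (1,1)$ at least one of $i,j$ exceeds $1$, and therefore
\[
f(\alpha_{i}) + f(\alpha_{j}) \;<\; 2 f(\alpha_{1}).
\]
Consequently, no cross-term with $(i,j) \ne (1,1)$ can contribute at the level $2 f(\alpha_{1})$.

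Finally, the diagonal term $c_{1}^{2}\alpha_{1}^{2}\prod_{v \in W}v$ has leading edge coordinate exactly $2 f(\alpha_{1})$ by the very definition \eqref{eqn:generalizededgecoordinates} of the generalized edge coordinate, and by Lemma \ref{lem:GECisinjective} there is a unique reduced multicurve of $V$-degree $\mathbf{0}$ realizing this coordinate. Its coefficient in $z^{2}\prod_{v \in W}v$ is $c_{1}^{2}$ times a nonzero scalar produced by the skein relations, so it survives in the total sum, giving $\lt_{f}(z^{2}\prod_{v \in W}v) = \lt_{f}(\alpha_{1}^{2}\prod_{v \in W}v)$. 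The only delicate point to guard against is an accidental cancellation at this top level; this is ruled out precisely by the strict inequality above combined with the injectivity of $f$, and I expect verifying this non-cancellation to be the main (though ultimately mild) obstacle.
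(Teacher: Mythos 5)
Your argument is correct and follows the same strategy as the paper's: expand $z^{2}\prod_{v\in W}v$ into cross-terms $\alpha_{i}\alpha_{j}\prod_{v\in W}v$, bound each resolution by $f(\alpha_{i})+f(\alpha_{j})$ via Lemma~\ref{lem:edgecoordinequality}, and conclude from the strict inequality $f(\alpha_{i})+f(\alpha_{j})<2f(\alpha_{1})$ for $(i,j)\ne(1,1)$. The paper phrases the reduction more tersely (``it suffices to compare $\alpha_{1}^{2}$ with $\alpha_{1}\alpha_{2}$''), while you explicitly track all pairs $(i,j)$ and spell out the non-cancellation at the top coordinate using Lemma~\ref{lem:GECisinjective}; the underlying content is identical.
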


\begin{proof}
It is sufficient to show that if $f(\alpha_{1}) > f(\alpha_{2})$, $\lt_{f}(\alpha_{1}^{2}\prod_{v \in W}v) > \lt_{f}(\alpha_{1}\alpha_{2}\prod_{v \in W}v)$. By definition, $\lt_{f}(\alpha_{1}^{2}\prod_{v \in W}v) = 2f(\alpha_{1})$. On the other hand, Lemma \ref{lem:edgecoordinequality} implies that for any term $\beta$ in $\alpha_{1}\alpha_{2}\prod_{v \in W}v$, $f(\beta) \le f(\alpha_{1}) + f(\alpha_{2})$. Thus, $\lt_{f}(\alpha_{1}\alpha_{2}\prod_{v \in W}v) \le f(\alpha_{1}) + f(\alpha_{2})$. Therefore, 
\[
	f(\alpha_{1}\alpha_{2}\prod_{v \in W}v) \le f(\alpha_{1}) + f(\alpha_{2}) < 2f(\alpha_{1}) = f(\alpha_{1}^{2}\prod_{v \in W}v), 
\]
where the second inequality holds from the assumption. 
\end{proof}

\begin{figure}
\includegraphics[width=0.6\textwidth]{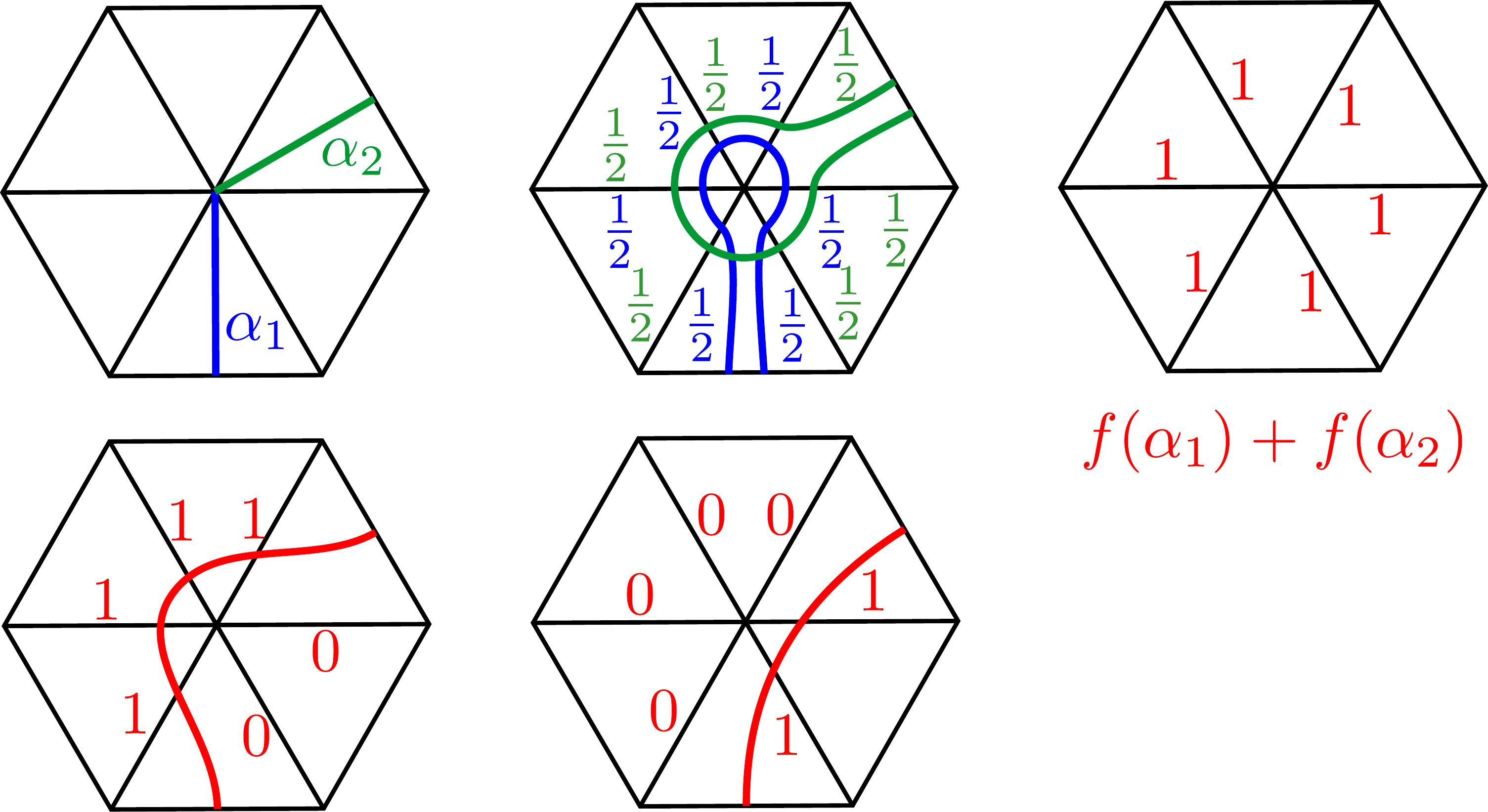}
\caption{Comparison of $f(\alpha_{1}) + f(\alpha_{2})$ and resolutions of $v\alpha_{1}\alpha_{2}$. The second row shows two resolutions of $v\alpha_{1}\alpha_{2}$ and their edge coordinates.}
\label{fig:comparison}
\end{figure}

\begin{figure}
\includegraphics[width=0.6\textwidth]{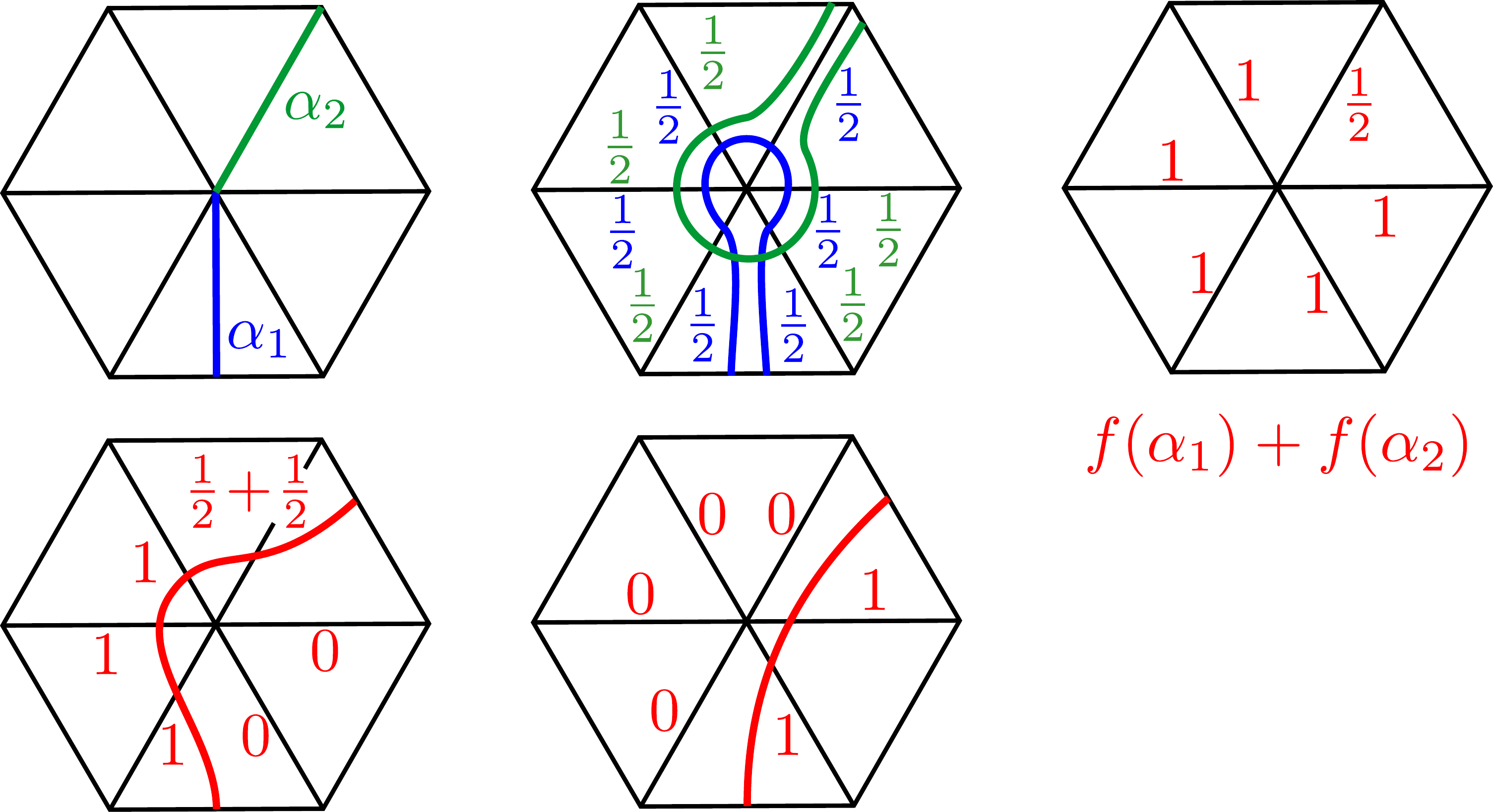}
\caption{Comparison of $f(\alpha_{1}) + f(\alpha_{2})$ and resolutions of $v\alpha_{1}\alpha_{2}$, in the case that $\alpha_{2}$ has an edge component. The second row shows two of resolutions of $v\alpha_{1}\alpha_{2}$ and their edge coordinates.}
\label{fig:comparison2}
\end{figure}

\begin{figure}
\includegraphics[width=0.6\textwidth]{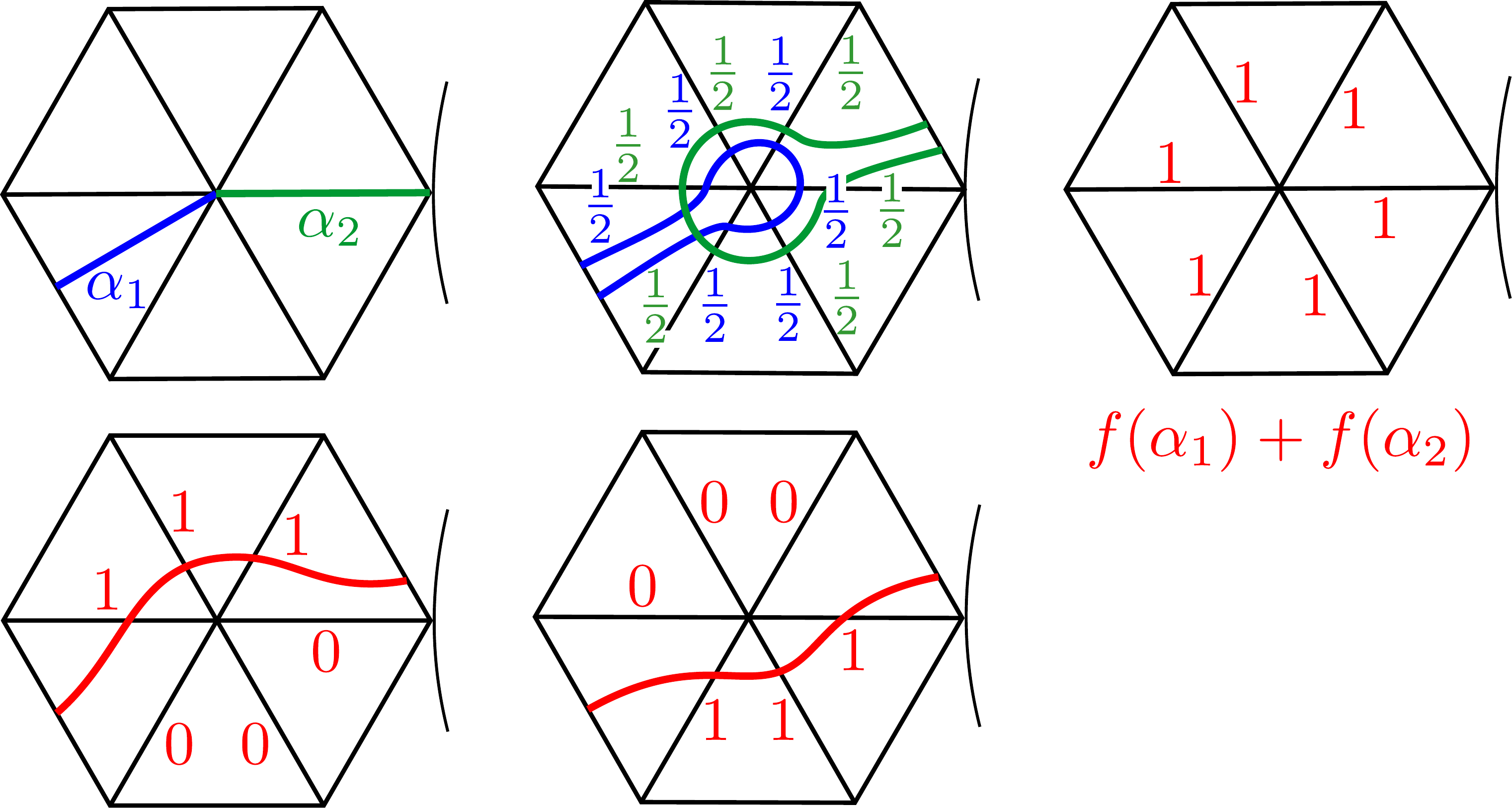}
\caption{Comparison of $f(\alpha_{1}) + f(\alpha_{2})$ and resolutions of $v\alpha_{1}\alpha_{2}$, in the case that one end of $\alpha_{2}$ is a boundary marked point.}
\label{fig:comparison5}
\end{figure}

\begin{figure}
\includegraphics[width=0.6\textwidth]{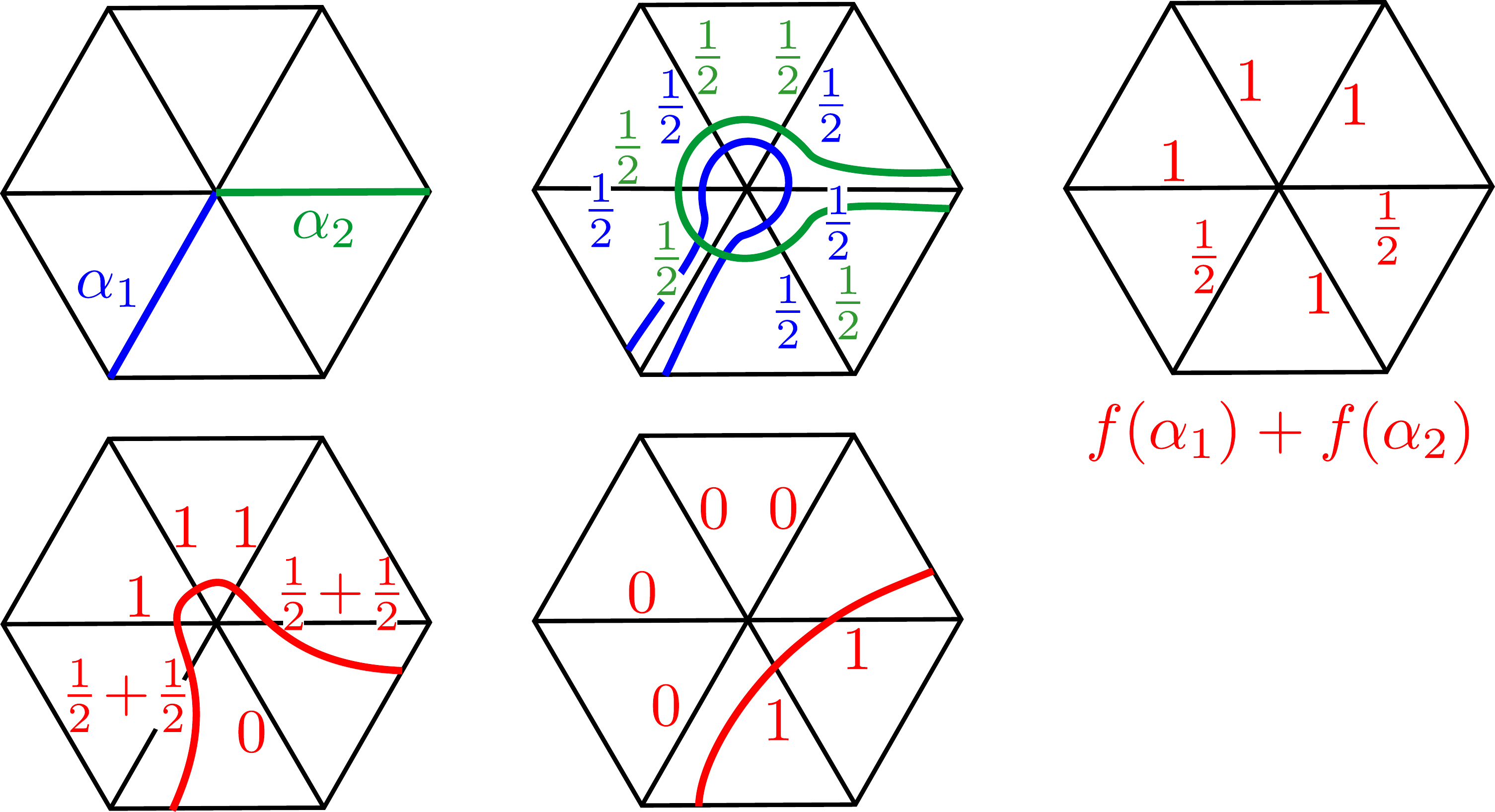}
\caption{Comparison of $f(\alpha_{1}) + f(\alpha_{2})$ and resolutions of $v\alpha_{1}\alpha_{2}$, in the case that both $\alpha_{1}$ and $\alpha_{2}$ have distinct edge components. The second row shows two of resolutions of $v\alpha_{1}\alpha_{2}$ and their edge coordinates.}
\label{fig:comparison3}
\end{figure}

\begin{figure}
\includegraphics[width=0.8\textwidth]{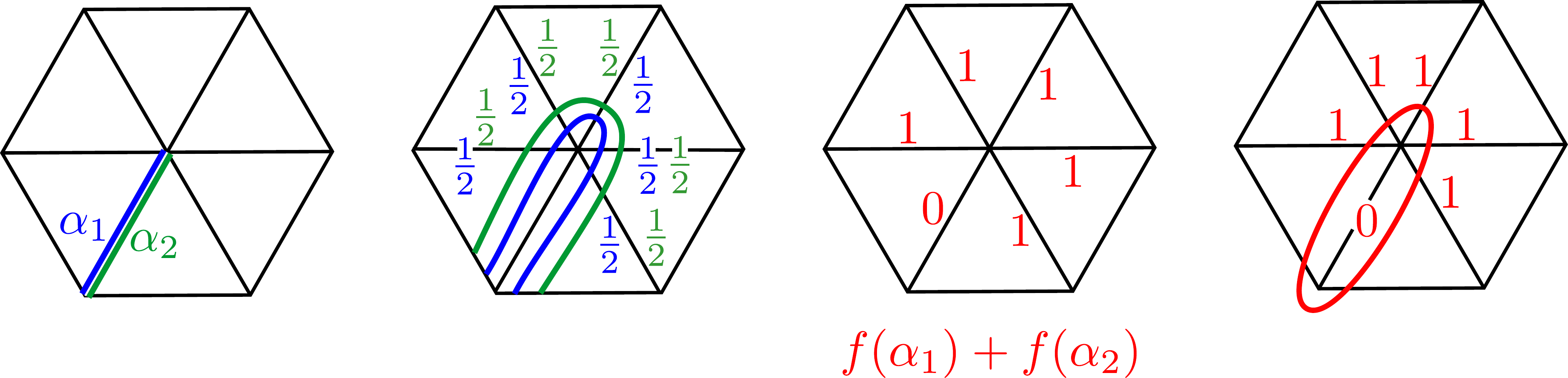}
\caption{Comparison of $f(\alpha_{1}) + f(\alpha_{2})$ and resolutions of $v\alpha_{1}\alpha_{2}$, in the case that both $\alpha_{1}$ and $\alpha_{2}$ have the same edge component. The second row shows the nonconstant resolution of $v\alpha_{1}\alpha_{2}$.}
\label{fig:comparison4}
\end{figure}

Our proof of Theorem \ref{thm:mainthm} relies on the computation of $Z(\overline{\cS}_{q}^{\rL+}(\Sigma))$ by Korinman in \cite{Kor21} that we restate here.

\begin{theorem}[\protect{\cite[Theorem 6.3]{Kor21}}]\label{thm:centerL+}
Let $q \in \CC^{*}$ be a primitive $n$-th root of unity for $n$ odd. The center of $\overline{\cS}_{q}^{\rL+}(\Sigma)$ is generated by the image of Chebyshev--Frobenius homomorphism $\Phi$, peripheral loops, the product of boundary classes $\beta_{D}$ for each boundary component $D \subset \partial \Sigma$ (Lemma \ref{lem:boudnaryarcs}), and $\beta_{D}^{-1}$.
\end{theorem}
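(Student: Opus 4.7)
The plan is to reduce the computation to a center computation inside a quantum torus via L\^e's quantum trace map for the reduced stated skein algebra. Fix an ideal triangulation $\Delta$ of $\Sigma$ with edge set $E$ and corner set $C$. There is an injective $\CC$-algebra homomorphism $\mathrm{tr}_\Delta : \overline{\cS}_{q}^{\rL+}(\Sigma) \hookrightarrow \TT_\Delta$ into a quantum torus whose generators $X_a$ are indexed by $E \cup C$, with commutation relations $X_a X_b = q^{2 P(a,b)} X_b X_a$ for a skew-symmetric integer matrix $P$ depending on $\Delta$. At a primitive $n$-th root of unity $q$ with $n$ odd, a direct computation shows that $Z(\TT_\Delta)$ is the $\CC$-span of monomials $X^{\mathbf{m}}$ whose exponent vectors lie in the lattice $L := \{\mathbf{m} \in \ZZ^{E \cup C} : P\mathbf{m} \in n\, \ZZ^{E \cup C}\}$.

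The first step is to verify that each listed element is central in $\overline{\cS}_{q}^{\rL+}(\Sigma)$. The image of $\Phi$ is central by the Chebyshev--Frobenius theorem of \cite{BW16, BL22}. The peripheral loops $\ell_v$ commute with everything because $\overline{\cS}_{q}^{\rL+}(\Sigma)$ has no arcs ending at $v$, so $\ell_v$ may be pushed into a small neighborhood of $v$ disjoint from any other generator. The boundary products $\beta_D$ are central by Lemma~\ref{lem:boudnaryarcs}, and inverses of central invertible elements are central. This shows that all the proposed generators lie in $Z(\overline{\cS}_{q}^{\rL+}(\Sigma))$.

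For the converse, I would order the basis $\mathsf{B}(\Sigma,V)$ of increasingly-stated positively-ordered simple diagrams lexicographically via the edge coordinates associated to $\Delta$. For a homogeneous central element $z = \sum c_i \alpha_i$, the image $\mathrm{tr}_\Delta(z)$ lies in $Z(\TT_\Delta)$, and its highest term is $c_{i_0} X^{f(\alpha_{i_0})}$ where $f$ denotes the edge coordinate. Centrality of the image in $\TT_\Delta$ forces $f(\alpha_{i_0}) \in L$. The key lemma is that the exponent vectors of the listed central generators already span $L$: Chebyshev-threaded simple loops and arcs contribute vectors of the form $n f(\alpha)$; peripheral loops contribute $f(\ell_v)$, which lies in $L$ because $\ell_v$ is in the radical of $P$; and the $\beta_D^{\pm 1}$ contribute the remaining generators supported on boundary edges. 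Once this lattice coincidence is established, one inductively subtracts suitable products of central generators from $z$ to strictly decrease the leading term. The procedure terminates because the order is well-founded on each graded piece (using a $\ZZ^{V_\circ}$-grading analogous to Section~\ref{ssec:gradedalgebra}), concluding that $z$ lies in the subalgebra generated by the listed elements.

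The main obstacle is the lattice computation in the third paragraph, namely verifying that the exponent vectors produced by the listed central generators exhaust all of $L$. This requires a careful local analysis of the quantum trace map on Chebyshev-threaded curves, peripheral loops, and boundary monomials, combined with a Smith-normal-form argument for the matrix $P$ modulo $n$. It is at this step that the hypothesis $n$ odd is crucial, as it removes $2$-torsion contributions coming from the half-integer shifts implicit in the stated quantum trace conventions and ensures the Chebyshev-Frobenius map has the expected image modulo lower-order terms.
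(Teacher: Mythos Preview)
This theorem is not proved in the present paper; it is quoted from \cite[Theorem 6.3]{Kor21} and used as a black box to derive Proposition~\ref{prop:centerM+} and ultimately Theorem~\ref{thm:mainthm}. So there is no ``paper's own proof'' to compare against. Your outline is in fact close in spirit to Korinman's argument and to the FKBL leading-term strategy that the paper itself adapts in Section~\ref{ssec:proofA}.

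That said, there is a genuine gap in your second-to-last paragraph. You write that for $z$ central in $\overline{\cS}_{q}^{\rL+}(\Sigma)$, ``the image $\mathrm{tr}_\Delta(z)$ lies in $Z(\TT_\Delta)$''. This does not follow from injectivity of $\mathrm{tr}_\Delta$ alone: an element central in a subalgebra need not be central in a larger algebra. What makes it work here is that $\TT_\Delta$ is an Ore localization of the image of $\mathrm{tr}_\Delta$ by a multiplicative set of monomials, and both are domains; then a statement of the type of Lemma~\ref{lem:centerbeforelocalization} gives $Z(\overline{\cS}_{q}^{\rL+}(\Sigma)) \subset Z(\TT_\Delta)$. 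You should state and justify this step explicitly, since it is the hinge on which the entire reduction to the lattice $L$ rests.

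A second, smaller issue: the quantum trace of a simple stated diagram is typically a \emph{sum} of monomials in $\TT_\Delta$ (one for each admissible internal state assignment), not a single monomial $X^{f(\alpha)}$. Your assertion that the highest term of $\mathrm{tr}_\Delta(z)$ equals $c_{i_0}X^{f(\alpha_{i_0})}$ therefore requires an argument that the leading monomial of $\mathrm{tr}_\Delta(\alpha)$ is controlled by the edge coordinate $f(\alpha)$ and that no cancellation occurs among the leading terms of the $\alpha_i$. This is true and standard, but it needs to be said. Once these two points are handled, the lattice computation you flag is indeed the substantive remaining work, and your identification of the odd-$n$ hypothesis as eliminating $2$-torsion in the Smith normal form is correct.
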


We lift the description of the center to that of $\cS_{q}^{\rM+}(\Sigma)$.

\begin{lemma}\label{lem:centerbeforelocalization}
Let $R$ be a $\CC$-algebra which is a domain, and $S$ be a multiplicative subset of $R$. Let $S^{-1}R$ be the left Ore localization of $R$ by $S$. Then $Z(R) = Z(S^{-1}R) \cap R$. 
\end{lemma}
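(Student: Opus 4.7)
The plan is a direct two-inclusion argument exploiting the universal property of the left Ore localization and the fact that $R$ embeds in $S^{-1}R$ (since $R$ is a domain, every $s\in S$ is either zero or a non-zero-divisor, and if $0\in S$ then $S^{-1}R = 0$ and the claim is vacuous).

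For the inclusion $Z(R)\subseteq Z(S^{-1}R)\cap R$, take $z\in Z(R)$. Clearly $z\in R$, so it suffices to show $z$ commutes with every element of $S^{-1}R$. A typical element of the left Ore localization has the form $s^{-1}r$ with $s\in S$ and $r\in R$. Since $zs=sz$ in $R$, multiplying on the left and right by $s^{-1}$ gives $s^{-1}z=zs^{-1}$ in $S^{-1}R$, and hence
\[
z(s^{-1}r)=s^{-1}zr=s^{-1}rz=(s^{-1}r)z,
\]
where the middle equality uses $zr=rz$ in $R$.

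For the reverse inclusion $Z(S^{-1}R)\cap R\subseteq Z(R)$, let $z\in Z(S^{-1}R)\cap R$. Since $R$ is a domain and $S$ consists of non-zero-divisors, the canonical map $R\hookrightarrow S^{-1}R$ is injective, so we view $R$ as a subalgebra of $S^{-1}R$. Then for any $r\in R\subseteq S^{-1}R$ we have $zr=rz$ in $S^{-1}R$ by assumption, and the equality already lives in $R$ by the injectivity of the localization map. Hence $z\in Z(R)$.

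I expect no substantive obstacle here; the only subtlety is handling the case $0\in S$ and verifying that the Ore condition is genuinely needed to manipulate $s^{-1}z=zs^{-1}$ (it is, because this identity uses that $s$ itself is a non-zero-divisor whose inverse in $S^{-1}R$ satisfies the usual cancellation rules). Both observations are immediate from the standard construction of $S^{-1}R$, so the lemma follows.
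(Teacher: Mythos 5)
Your proof is correct and takes essentially the same two-inclusion approach as the paper: both derive $s^{-1}z = zs^{-1}$ from $zs=sz$ to handle the forward inclusion, and both use the embedding $R\hookrightarrow S^{-1}R$ (injective since $R$ is a domain) for the reverse. Your write-up is a bit more streamlined than the paper's explicit $1^{-1}a$ bookkeeping, and your aside about $0\in S$ is a harmless extra, but there is no substantive difference.
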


\begin{proof}
Suppose that $a \in Z(R)$. by the inclusion $R \hookrightarrow S^{-1}R$, we may identify $a$ with $1^{-1}a$. Take $s^{-1}r \in S^{-1}R$. Note that from $a \in Z(R)$, $as = sa$, so $as^{-1} = s^{-1}a$. By the definition of the multiplication in the non-commutative localization, 
\[
	(1^{-1}a)(s^{-1}r) = 1^{-1}s^{-1}ar = (s1)^{-1}ar = s^{-1}ra = (1s)^{-1}ra = s^{-1}1^{-1}ra = (s^{-1}r)(1^{-1}a).
\]
Therefore, $a  = 1^{-1}a \in Z(S^{-1}R)$. Hence $Z(R) \subset Z(S^{-1}R) \cap R$.

Conversely, suppose that $1^{-1}a \in Z(S^{-1}R) \cap R$. By the assumption, for any $1^{-1}b \in R$, $(1^{-1}a)(1^{-1}b) = (1^{-1}b)(1^{-1}a)$. But from $(1^{-1}a)(1^{-1}b) = 1^{-1}ab$ and $R \hookrightarrow S^{-1}R$, we obtain $ab = ba$ for all $b \in R$. Therefore, $a \in Z(R)$. 
\end{proof}

\begin{proposition}\label{prop:centerM+}
Let $q \in \CC^{*}$ be a primitive $n$-th root of unity for $n$ odd. The center of $\cS_{q}^{\rM+}(\Sigma)$ is generated by the image of the Chebyshev--Frobenius homomorphism $\Phi$, peripheral loops, and the product of boundary classes $\beta_{D}$ for each boundary component $D \subset \partial \Sigma$ (Lemma \ref{lem:boudnaryarcs}). 
\end{proposition}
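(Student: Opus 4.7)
The plan is to deduce the proposition from Korinman's Theorem \ref{thm:centerL+} via the Ore-localization isomorphism $\cS_{q}^{\rM+}(\Sigma)[\partial^{-1}] \cong \overline{\cS}_{q}^{\rL+}(\Sigma)$ together with Lemma \ref{lem:centerbeforelocalization}. Applying that lemma with $R = \cS_{q}^{\rM+}(\Sigma)$ and $S$ the multiplicative set generated by boundary arcs immediately gives
\[
Z(\cS_{q}^{\rM+}(\Sigma)) \;=\; Z(\overline{\cS}_{q}^{\rL+}(\Sigma)) \cap \cS_{q}^{\rM+}(\Sigma).
\]
By Theorem \ref{thm:centerL+}, every element on the right is a polynomial in $\mathrm{im}\,\Phi$, peripheral loops, $\beta_{D}$, and $\beta_{D}^{-1}$, so the proposition reduces to showing that whenever such a polynomial actually lies in $\cS_{q}^{\rM+}(\Sigma)$, the negative powers of $\beta_{D}$ are unnecessary.

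To carry this out I would use the grading by boundary-arc endpoints: let $\cB$ denote the set of boundary arcs, and assign each $\beta_{i} \in \cB$ degree $\be_{i} \in \ZZ^{\cB}$, while every other generator (loops, and arcs ending at interior punctures) has degree $0$. The defining relations (A)--(F) of $\cS_{q}^{\rM+}(\Sigma)$ preserve this count, so the grading is well-defined and extends to a $\ZZ^{\cB}$-grading on $\overline{\cS}_{q}^{\rL+}(\Sigma)$ with $\beta_{D}^{-1}$ placed in degree $-\sum_{i}\be_{i}$. Under this extension, $\cS_{q}^{\rM+}(\Sigma)$ is precisely the $\ZZ_{\geq 0}^{\cB}$-graded part. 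By Lemma \ref{lem:gradedpiecesarecentral}, the graded pieces of any central element remain central, so I may reduce to a homogeneous $z \in Z(\cS_{q}^{\rM+}(\Sigma))$ of degree $\bn \in \ZZ_{\geq 0}^{\cB}$.

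Fix a boundary component $D$ with boundary arcs $\beta_{1}, \ldots, \beta_{k}$. The only central generators with non-zero $D$-grading are $\Phi(\beta_{i}) = \beta_{i}^{n}$, of degree $n\be_{i}$, and $\beta_{D}^{\pm 1}$, of degree $\pm\sum_{i}\be_{i}$, so each monomial appearing in $z$ has $D$-component $(na_{i}+c)_{i=1}^{k}$ for some $a_{i} \in \ZZ_{\geq 0}$ and $c \in \ZZ$. When these components are non-negative, the residue $c_{0} := c \bmod n \in \{0,1,\ldots,n-1\}$ satisfies $na_{i}+c \geq c_{0}$ for every $i$ (since any non-negative integer congruent to $c_{0}$ modulo $n$ is at least $c_{0}$), so replacing $c$ by $c_{0}$ and $a_{i}$ by $(na_{i}+c-c_{0})/n \in \ZZ_{\geq 0}$ rewrites the monomial using only $\beta_{i}^{n}$ and a non-negative power of $\beta_{D}$. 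Applying this substitution to each monomial across each boundary component eliminates every occurrence of $\beta_{D}^{-1}$ and yields the claim.

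The main obstacle I anticipate is the structural check that $\cS_{q}^{\rM+}(\Sigma)$ is exactly the degree-$\geq 0$ part of the localization (not merely contained in it); this requires verifying that each defining relation is homogeneous in the boundary-endpoint count and that any non-negatively graded element of $\overline{\cS}_{q}^{\rL+}(\Sigma)$ is actually represented by an element of $\cS_{q}^{\rM+}(\Sigma)$ under the embedding in \eqref{eqn:skeinalgebradiagram}. Once this structural point is in place, the remaining lattice arithmetic above is elementary.
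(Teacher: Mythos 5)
Your plan — reduce to the localization via Lemma \ref{lem:centerbeforelocalization} and Theorem \ref{thm:centerL+}, then argue that an element of the localized center that already lies in $\cS_q^{\rM+}(\Sigma)$ needs no negative powers of boundary classes — correctly identifies where the content lies, and you are right that the "structural check" at the end is the crux. However, the specific tool you propose for that check does not exist: the $\ZZ^{\cB}$-grading that counts boundary-arc components is \emph{not} well-defined on $\cS_{q}^{\rM+}(\Sigma)$.

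The skein relation (A) does not preserve the number of boundary-arc components. Take $\Sigma$ a disk with four boundary marked points $p_1,\dots,p_4$, with boundary arcs $\beta_{12},\beta_{23},\beta_{34},\beta_{41}$ and diagonals $\gamma_{13},\gamma_{24}$. The product $\gamma_{13}\gamma_{24}$ has one interior crossing, and resolving it gives
\[
\gamma_{13}\gamma_{24} = q^{\pm 1}\,\beta_{12}\beta_{34} + q^{\mp 1}\,\beta_{14}\beta_{23}.
\]
The left side has $\ZZ^{\cB}$-degree $\mathbf{0}$ in your proposed grading (both $\gamma$'s are interior arcs), but the right side has pieces in degrees $\be_{12}+\be_{34}$ and $\be_{14}+\be_{23}$. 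Hence the grading is not multiplicative, the assertion that "the defining relations preserve this count" fails, and the lattice arithmetic that follows (writing monomial degrees as $(na_i + c)_i$ and reducing mod $n$) rests on a grading that does not exist. The same example shows that $\Phi(\gamma)$ for an interior arc $\gamma$ with boundary endpoints is not concentrated in degree $\mathbf{0}$, undermining the claim that "the only central generators with non-zero $D$-grading are $\Phi(\beta_i)$ and $\beta_D^{\pm 1}$." The grading that \emph{is} well-defined is the count of endpoints at each boundary marked point (a $\ZZ^{V_\partial}$-grading), but with respect to that grading $\cS_q^{\rM+}(\Sigma)$ is not simply the non-negative part of the localization — $\gamma\beta_i^{-1}$ can have non-negative degree without lying in $\cS_q^{\rM+}(\Sigma)$ — so the argument would still need a different mechanism (e.g.\ a valuation coming from edge coordinates or the leading-term analysis that the paper deploys in the proof of Theorem \ref{thm:mainthm}). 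The paper's own proof of this proposition is terse and essentially asserts the conclusion from Lemma \ref{lem:centerbeforelocalization}; you were right to be skeptical of that step, but the grading you reached for does not close the gap.
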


\begin{proof}
We have $\overline{\cS}_{q}^{\rL+}(\Sigma) \cong \cS_{q}^{\rM+}(\Sigma)[\partial^{-1}]$ \cite{LY22}. Under this identification, the generating sets are the image of $\Phi : \cS_{1}^{\rM+}(\Sigma) \to \cS_{q}^{\rM+}(\Sigma)$, peripheral loops, and $\beta_{D}$, $\beta_{D}^{-1}$, and $\beta_{i}^{-n}$ for each boundary arc $\beta_{i}$. The image of $\Phi$, peripheral loops, $\beta_{D}$ are elements in $\cS_{q}^{\rM+}(\Sigma)$. Lemma \ref{lem:centerbeforelocalization} and the fact that $\cS_{q}^{\rM+}(\Sigma)$ is a domain, a corollary of \cite[Theorem 3]{LY22}, imply that $Z(\cS_{q}^{\rM+}(\Sigma))$ is generated by the described elements. 
\end{proof}

We are ready to prove Theorem \ref{thm:mainthm}.

\begin{proof}[Proof of Theorem \ref{thm:mainthm}]
Take a central element $z\in Z(\cS_{q}^{\MRY}(\Sigma))$. Applying Lemma \ref{lem:gradedpiecesarecentral}, we may assume that $z$ is homogeneous with respect to $\deg_{V}$. By multiplying some product of vertices, we may assume that $z = \sum_{i=1}^k c_i z_i$ with $c_i\in \CC$ and $z_i\in \RMC$. We may further assume that $f(z_1)>f(z_2)>\dots > f(z_k)$. Let $\deg_{V}(z) = \prod_{W \subset V_{\circ}}\be_{v}$. Consider $z^2\prod_{v \in W} v$. Then, $\lt_{f} (z^2\prod v)=\lt_{f}(z_1^2\prod v)$.

We may write $z_1 \in \RMC$ uniquely as
\begin{equation}\label{eqn:alpha}
	z_1 = \prod \alpha_{i}^{m_{i}} \prod p_{\beta_{j}}^{m_{j}}\beta_{j} \prod r_{\gamma_{k}}^{m_{k}}\gamma_{k} \prod r_{\delta_{\ell}}^{m_{\ell}} \prod {\eta_{s}}^{m_{s}}
\end{equation}
up to a power of $q$, where:
\begin{enumerate}
\item $\alpha_i$ is a loop class; 
\item $\beta_{j}$ is an arc class connecting two distinct interior punctures $v_{j}$ and $w_{j}$;
\item $p_{\beta_{j}}$ is a loop surrounding $\beta_{j}$;
\item $\gamma_{k}$, $\delta_{\ell}$ are arc classes connecting one interior puncture $x_{k}$ and one boundary marked point;
\item $r_{\gamma_{k}}$, $r_{\delta_{\ell}}$ are arc classes bounding a once-punctured monogon surrounding $\gamma_{k}$ and $\delta_{\ell}$ respectively, and; 
\item $\eta_{s}$ is an arc connecting two boundary marked points, 
\end{enumerate}
and none of the curves are isotopic to each other. All curves in the above description are disjoint and non-isotopic except at some of the boundary marked points. See Figure \ref{pic;ex_curves} for an example.

\begin{figure}[ht]\centering\includegraphics[width=170pt]{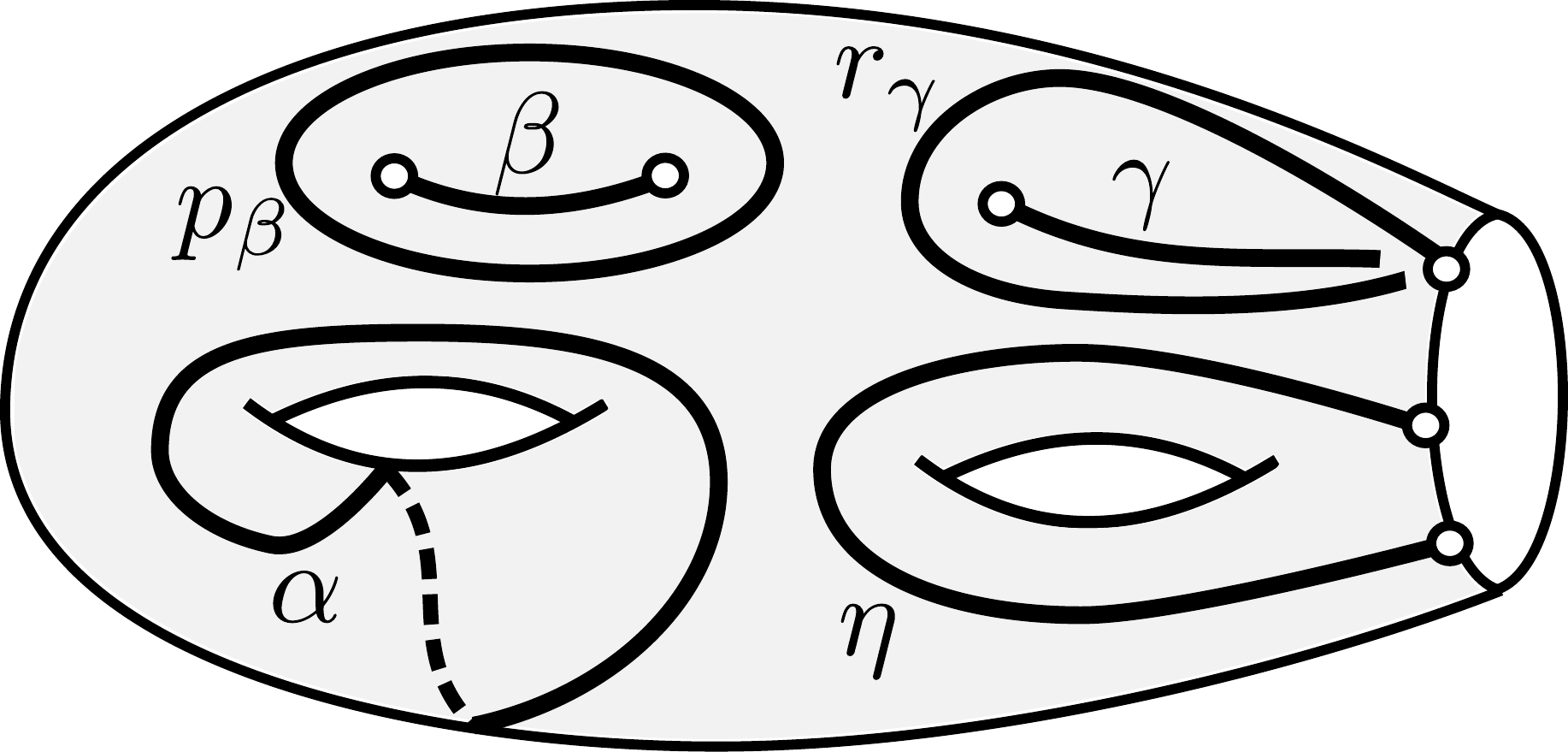}\caption{A presentation of the leading term in \eqref{eqn:alpha}}\label{pic;ex_curves}\end{figure}

In Lemma \ref{lem:leadingterm}, we showed that $\lt_{f}( z^{2}\prod_{v \in W}v) = \lt_{f}(z_1^{2}\prod_{v \in W}v )$. By a direct computation, we have 
\[
	z_1^{2}\prod_{v \in W}v  = \prod \alpha_{i}^{2m_{i}} \prod p_{\beta_{j}}^{2m_{j}}(p_{\beta_{j}}+2) \prod r_{\gamma_{k}}^{2m_{k}+1} \prod r_{\delta_{\ell}}^{2m_{\ell}} \prod {\eta_{s}}^{2m_{s}}.
\]
Thus, 
\begin{equation}\label{eqn:ltalpha}
	\lt_{f}( z_1^{2}\prod_{v \in W}v) = \prod \alpha_{i}^{2m_{i}} \prod p_{\beta_{j}}^{2m_{j}+1} \prod r_{\gamma_{k}}^{2m_{k}+1} \prod r_{\delta_{\ell}}^{2m_{\ell}} \prod {\eta_{s}}^{2m_{s}}
\end{equation}
and this is the leading term of $z^{2}\prod_{v \in W}v$.

On each boundary component $D$ of $\partial \Sigma$, we may find a maximum $x_{D} \in \ZZ_{\ge 0}$ such that $z_{1}$ is a multiple of $\beta_{D}^{x_{D}}$. Set $y_{D}$ as the unique integer such that $y_{D} \equiv x_{D} \mbox{ mod } n$ and $0 \le y_{D} < n$.

Since $z^{2}\prod_{v \in W}v \in Z(\cS_{q}^{\MRY}(\Sigma))$ and $\deg_\circ (z^{2}\prod_{v \in W}v )=\mathbf{0}$, we may regard its image as an element in $\cS_{q}^{\rM}(\Sigma)$, which is central. We may also think it as a central element in $\cS_{q}^{\rM+}(\Sigma)$ without any peripheral components. And $\lt_{f}(z_1^{2}\prod_{v \in W}v)$ is the leading term of a central element $z^{2}\prod_{v \in W}v$. By the above argument, $z_1^{2}\prod_{v \in W}v$ is generated by the image of Chebyshev--Frobenius homomorphism $\Phi$ and boundary central elements, i.e. an element of the form $\beta_{D}$ for some component $D \subset \partial \Sigma$. Then
\[
	\prod \alpha_{i}^{2m_{i}} \prod r_{\delta_{\ell}}^{2m_{\ell}} \prod {\eta_{s}}^{2m_{s}}\prod \beta_{D}^{-2y_{D}} 
\]
 is generated by the image of Chebyshev--Frobenius homomorphism $\Phi$.

If $\alpha'$ is the leading term of an element in the image of $\Phi$, then all of its edge coordinates are multiples of $n$. Because $n$ is odd, it also implies that all of the corner coordinates are also multiples of $n$. As all corner coordinates coming from $\prod \alpha_{i}^{2m_{i}} \prod r_{\delta_{\ell}}^{2m_{\ell}} \prod {\eta_{s}}^{2m_{s}}\prod \beta_{D}^{-2y_{D}}$ are multiples of $n$ and multiples of two, we may take a reduced multicurve $\alpha'$ whose corner coordinates are that of $\prod \alpha_{i}^{2m_{i}} \prod r_{\delta_{\ell}}^{2m_{\ell}} \prod {\eta_{s}}^{2m_{s}}\prod \beta_{D}^{-2y_{D}}$ divided by $2n$. Moreover, the corner coordinates coming from $\prod p_{\beta_{j}}^{2m_{j}+1}\prod r_{\gamma_{k}}^{2m_{k}+1}$ are also multiples of $n$. So we obtain $n|2m_{j}+1$ and $n|2m_{k}+1$. Then $2m_{j}+1 = n(2t_{j}+1)$ for some $t_{j} \in \ZZ$, hence $m_{j}$ can be written as $\frac{n-1}{2}+t_{j}n$. So is $m_{k}$. Now 
\[
\begin{split}
	\prod p_{\beta_{j}}^{m_{j}}\beta_{j}\prod r_{\gamma_{k}}^{m_{k}}\gamma_{k}
	&= \prod p_{\beta_{j}}^{\frac{n-1}{2}+t_{j}n}\beta_{j} \prod r_{\gamma_{k}}^{\frac{n-1}{2}+t_{k}n}\gamma_{k}\\ 
	&= \prod p_{\beta_{j}}^{\frac{n-1}{2}}\beta_{j} \prod (p_{\beta_{j}}^{n})^{t_{j}}\prod r_{\gamma_{k}}^{\frac{n-1}{2}}\gamma_{k}\prod (r_{\gamma_{k}}^{n})^{t_{k}}.
\end{split}
\]
The latter is indeed, up to a constant multiple, the leading term of 
\begin{equation}\label{eqn:zprime}
	z'' := \prod \frac{1}{\sqrt{v_{j}}\sqrt{w_{j}}}T_{n}(\sqrt{v_{j}}\sqrt{w_{j}}\beta_{j})T_{n}(p_{\beta_{j}})^{t_{j}} \prod \gamma_{k}^{n}\prod (r_{\gamma_{k}}^{n})^{t_{k}}.
\end{equation}

Now set $z' := T_{n}(\alpha') z''\prod \beta_{D}^{y_{D}}$. Then $z'$ is a product of central elements listed in Theorem~\ref{thm:mainthm}. By taking $z - z'$, we have a central element with a smaller leading term. The induction procedure terminates in finite steps since the edge coordinates are in $\ZZ_{\ge 0}^k$, and we obtain the desired result.
\end{proof}

The same argument indeed describe the center of $Z(\cS_{q}^{\MRY}(\Sigma)[\partial^{-1}])$. For the future reference (mostly because of its relationship with cluster algebra \cite{KMW25+}), we leave the statement and the proof.

\begin{theorem}\label{thm:localized}
Let $q \in \CC^{*}$ be a primitive $n$-th root of unity for odd $n$. The center $Z(\cS_{q}^{\MRY}(\Sigma)[\partial^{-1}])$ is a $\CC[v_{i}^{\pm}]$-subalgebra generated by the elements in the statement of Theorem \ref{thm:mainthm} and 
\begin{enumerate}
\item For a boundary arc class $\beta$, $\beta^{-n}$;
\item For each component $D$ of $\partial \Sigma$, $\beta_{D}^{-1}$. 
\end{enumerate}
\end{theorem}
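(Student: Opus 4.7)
The plan is to reduce to Theorem \ref{thm:mainthm} by clearing boundary-arc denominators. Given $z \in Z(\cS_{q}^{\MRY}(\Sigma)[\partial^{-1}])$, I would multiply $z$ by a suitably chosen central element $c \in \cS_{q}^{\MRY}(\Sigma)$, built from $\beta_{D}$'s and $\beta_{i}^{n}$'s, so that $cz \in \cS_{q}^{\MRY}(\Sigma)$. Lemma \ref{lem:centerbeforelocalization} then places $cz$ in $Z(\cS_{q}^{\MRY}(\Sigma))$, Theorem \ref{thm:mainthm} expresses $cz$ as a product of its listed generators, and writing $z = c^{-1}(cz)$ exhibits $z$ in the desired form, since $c^{-1}$ is a product of the new generators $\beta_{D}^{-1}$ and $\beta_{i}^{-n}$. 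Note that these new generators are automatically central in the localization, because $\beta_{i}^{n}$ (Lemma \ref{lem:imChebysheviscommutative}(2)) and $\beta_{D}$ (Lemma \ref{lem:boudnaryarcs}) are central in $\cS_{q}^{\MRY}(\Sigma)$, and inverses of central elements remain central.

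To construct $c$, write $z = s^{-1} r$ with $r \in \cS_{q}^{\MRY}(\Sigma)$ and $s$ in the multiplicative set generated by boundary arcs. Up to a power of $q$, the element $s$ equals an ordered monomial $\prod_{i} \beta_{i}^{a_{i}}$ in the boundary arcs. For each boundary component $D$ with boundary arcs $\beta_{1},\dots,\beta_{k}$, choose $b_{i}$ to be the smallest multiple of $n$ with $b_{i} \ge a_{i}$, so that $(b_{i}) = \sum_{i}(b_{i}/n)\cdot n\be_{i}$ lies in the sublattice $M_{D}$ of Corollary \ref{cor:boundarycenterdegree}. Combining over all boundary components, define $c := \prod_{i} \beta_{i}^{b_{i}}$ in the same fixed order; by Corollary \ref{cor:boundarycenterdegree}, it is central in $\cS_{q}^{\MRY}(\Sigma)$. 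With this choice, $cs^{-1}$ is, up to a power of $q$, the monomial $\prod_{i} \beta_{i}^{b_{i} - a_{i}}$ with nonnegative exponents, so $cz = (cs^{-1})r$ lies in $\cS_{q}^{\MRY}(\Sigma)$. Since both $c$ and $z$ are central in the localization, so is $cz$, and Lemma \ref{lem:centerbeforelocalization} then places $cz \in Z(\cS_{q}^{\MRY}(\Sigma))$.

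By Theorem \ref{thm:mainthm}, $cz$ is a product of the generators listed there. On the other hand, since $(b_{i}) \in M_{D}$ on each boundary component, $c$ is (up to a power of $q$) a product of $\beta_{D}$'s and $\beta_{i}^{n}$'s, so $c^{-1}$ is a product of the new generators $\beta_{D}^{-1}$ and $\beta_{i}^{-n}$. The identity $z = c^{-1}(cz)$ then realizes $z$ in the claimed generating set. The main obstacle is the bookkeeping of $q$-scalars arising from non-commutativity of boundary arcs: every reordering of a monomial in the $\beta_{i}$'s introduces a power of $q$, and the identification of $s$ with an ordered monomial, as well as the commutation $cs^{-1} = s^{-1}c$ (which does hold cleanly thanks to centrality of $c$), must be made compatible with the fixed order used to define $c$. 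These scalars do not affect membership in the subalgebra generated by the listed elements, but they must be tracked carefully; once this is done, the theorem follows as a direct reduction to Theorem \ref{thm:mainthm}.
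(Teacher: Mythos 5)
Your proposal is correct and follows essentially the same route as the paper: clear denominators by multiplying by a central boundary monomial, use Lemma \ref{lem:centerbeforelocalization} to land in $Z(\cS_{q}^{\MRY}(\Sigma))$, invoke Theorem \ref{thm:mainthm}, and reinstate the inverse. The paper's proof is terser (it simply says ``by multiplying sufficient boundary central elements''), whereas you spell out the explicit construction of $c$ via exponents $b_i \in n\ZZ$ and Corollary \ref{cor:boundarycenterdegree}, but the underlying idea is identical.
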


\begin{proof}
For a central element $z\in Z(\cS_{q}^{\MRY}(\Sigma)[\partial^{-1}])$, by multiplying sufficient boundary central elements, denoted by $\alpha$, we may assume $\alpha z\in Z(\cS_{q}^{\MRY}(\Sigma))$. From the above argument, $z$ is generated by the central elements listed in Theorem~\ref{thm:mainthm}. Since $\alpha^{-1}\in Z(\cS_{q}^{\MRY}(\Sigma)[\partial^{-1}])$, we conclude $z=\alpha^{-1}(\alpha z)\in Z(\cS_{q}^{\MRY}(\Sigma)[\partial^{-1}])$ and $z$ is generated by central elements listed in Theorem~\ref{thm:mainthm} and the inverses of boundary central elements. 
\end{proof}

%%%%%%%%%%%%%%%%%%%%%%%%%%%%%%%%%%%

\section{Almost Azumaya algebras}\label{sec:almostAzumaya}
In this section, we investigate the representation theory of $\cS_{q}^{\MRY}(\Sigma)$.

\subsection{(Almost) Azumaya algebra and its representation theory}\label{ssec:Azumaya}

Let us first  review basic knowledge on (almost) Azumaya algebras following \cite{MR01}. See also \cite{FKBL19}.
Recall that a $\CC$-algebra $A$ is \emph{Azumaya} if $A$ is finitely generated projective $Z(A)$-module and the natural morphism 
\[
	A \otimes_{Z(A)}A^{op} \to \mathrm{End}_{Z(A)}A
\]
of $A$-modules is an isomorphism. By Artin-Wedderburn theorem,  if $Z(A) = \CC$, then $A \cong \rM_{n}(\CC)$ for some $n$. If $Z(A)$ is a finitely generated $\CC$-algebra, then over the associated affine algebraic variety $\mathrm{MaxSpec} \;Z(A)$, $A$ can be understood as a continuous family of matrix algebras. Indeed, there is a well-known correspondence between $\mathrm{MaxSpec}\; Z(A)$ and the set of maximal ideals in $Z(A)$. If we take a maximal ideal $m \subset Z(A)$, $Z(A)/m \cong \CC$ and its fiber $A \otimes_{Z(A)}Z(A)/m$ is a central simple algebra over $Z(A)/m \cong \CC$. In particular, $A \otimes_{Z(A)}Z(A)/m$ is isomorphic to a matrix algebra $\rM_{n}(\CC)$ for some $n$. Since $A$ is projective, $A \otimes_{Z(A)}Z(A)/m$ is locally free, hence $n$ is independent from the choice of $m$. The number $n$ is called the \emph{PI degree}.

A $\CC$-algebra $A$ is \emph{almost Azumaya} if there is an element $c \in Z(A)\setminus\{0\}$ such that its localization $A_{c}$ is Azumaya. For an affine algebraic variety, a localization corresponds to taking a Zariski open subset. Thus, being almost Azumaya implies that there is a Zariski open subset $U \subset \mathrm{MaxSpec}\; Z(A)$ such that $A_{c}$ is a family of matrix algebras over $U$. Note that if $Z(A)$ is an integral domain, $U$ is an open dense subset of $\mathrm{MaxSpec}\; Z(A)$.

The representation theory of an Azumaya algebra is very nice. Let $A$ be an Azumaya algebra with a finitely generated center $Z(A)$. Suppose that $V$ is a finite dimensional irreducible representation of $A$. Then the center $Z(A)$ acts as a scalar multiple on $V$. If we denote $m \subset Z(A)$ by the set of elements acting as the zero map is indeed a maximal ideal. Hence $V$ induces a representation of $A \otimes_{Z(A)}Z(A)/m \cong \rM_{n}(\CC)$. Since the only irreducible representation of $\rM_{n}(\CC)$ is $\CC^{n}$, $\dim V = n$. In summary, any irreducible representation of $A$ is of dimension $n$, and there is a bijection between the set $\mathrm{Irrep}(A)$ of finite dimensional irreducible $A$-representations and $\mathrm{MaxSpec}\; Z(A)$.

Suppose that $A$ is an almost Azumaya with a finitely generated center $Z(A)$. We retain the same notation in the above discussion. Let $V$ be a finite dimensional irreducible representation of $A$. As before, its center $Z(A)$ acts as a scalar multiple and we may choose a maximal ideal $m \subset Z(A)$ acting as zero on $V$. If $m \in U \subset \mathrm{MaxSpec}\; Z(A)$, then the localizing element $c \notin m$ and acts as a nonzero scalar. Thus, $V$ has a $A_{c}$-representation structure. Since $A_{c}$ is Azumaya, we know $\dim V = n$ and there is only one such a representation.

We summarize the above discussion.

\begin{proposition}
Let $A$ be an almost Azumaya algebra and $Z(A)$ is a finitely generated integral domain. Let $c \in Z(A)$ such that $A_{c}$ is Azumaya over $Z(A)_{c}$. Let $U = \mathrm{MaxSpec}\; Z(A)_{c} \subset \mathrm{MaxSpec}\; Z(A)$. Then the following holds. 
\begin{enumerate}
\item There is an injective map $\chi : U \to \mathrm{Irrep}(A)$. 
\item Any irreducible representation $V \in \chi(U)$ has the same dimension, that is the \emph{PI degree} of $A_{c}$.
\end{enumerate}
\end{proposition}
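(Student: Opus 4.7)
The plan is to build $\chi$ directly from the Azumaya structure of $A_c$ and then extract both claims simultaneously from Artin--Wedderburn together with the fact that the central character determines a finite-dimensional simple module on which the center acts by scalars. I would not prove (1) and (2) separately.

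For the construction, a closed point $m \in U = \mathrm{MaxSpec}\, Z(A)_c$ corresponds to a maximal ideal of $Z(A)_c$ with residue field $Z(A)_c/m \cong \CC$. Since $A_c$ is Azumaya over $Z(A)_c$, the fiber $A_c/mA_c \cong A_c \otimes_{Z(A)_c}(Z(A)_c/m)$ is a central simple $\CC$-algebra, hence by Artin--Wedderburn equals $\rM_n(\CC)$ for some integer $n$. Because $A_c$ is finitely generated projective, and $\mathrm{MaxSpec}\, Z(A)_c$ is connected ($Z(A)$ being a domain), the integer $n$ is independent of $m$ and is the PI degree. I would define $\chi(m)$ to be the $A$-module obtained by pulling the unique simple $\rM_n(\CC)$-module $\CC^n$ back along the composition
\[
	A \hookrightarrow A_c \twoheadrightarrow A_c/mA_c \cong \rM_n(\CC).
\]
The main technical step is to verify that this composition $A \to \rM_n(\CC)$ is surjective. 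Since every element of $A_c$ has the form $c^{-k}a$ with $a \in A$, and since $c \notin m$ forces $\bar{c} = \lambda \in \CC^{*}$, the class of $c^{-k}a$ modulo $m$ equals $\lambda^{-k}\bar{a}$, which already lies in the image of $A$. Surjectivity then ensures $\chi(m)$ is simple as an $A$-module with $\dim_{\CC}\chi(m) = n$, giving (2) and also showing $\chi$ actually lands in $\mathrm{Irrep}(A)$.

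For injectivity in (1), I would invoke that on the finite-dimensional simple $A$-module $\chi(m)$ the center $Z(A)$ acts by scalars, and that the resulting character $Z(A) \to \CC$ is precisely the one obtained by reducing modulo $m$. Two distinct points $m \neq m'$ of $U$ give two distinct such characters, so any $A$-module isomorphism $\chi(m) \cong \chi(m')$ would have to intertwine incompatible $Z(A)$-actions, a contradiction. I do not anticipate any serious obstacle; the only point requiring genuine care is the surjectivity of $A \to A_c/mA_c$, and that follows routinely from $c$ being invertible modulo each $m \in U$.
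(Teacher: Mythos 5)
Your proposal is correct and matches the paper's approach: the paper's surrounding discussion and the remark following the proposition construct $\chi$ exactly as you do, via the epimorphism $A \to A/mA \cong A_c/mA_c \cong \rM_n(\CC)$ and pullback of the unique simple module, with injectivity tracked through the central character. Your explicit verification that $A \to A_c/mA_c$ is surjective (using that $c$ reduces to a nonzero scalar) is the correct justification for the paper's implicit identification $A/mA \cong A_c/mA_c$.
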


\begin{remark}
The map $\chi : U \to \mathrm{Irrep}(A)$ can be obtained as the following. For any maximal ideal $m \in U$, $A/mA \cong A_{c}/mA_{c} \cong \rM_{n}(\CC)$. Thus, up to isomorphism, there is a unique $n$-dimensional $A/mA$-representation $V$. The $A$-representation structure is induced by the epimorphism $A \to A/mA \cong \mathrm{End}_{\CC}V$. 
\end{remark}

\subsection{$\cS_{q}^{\MRY}(\Sigma)$ is almost Azumaya}

We now show that $\cS_{q}^{\MRY}(\Sigma)$ is almost Azumaya. In this section, we assume that our surface $\Sigma$ admits an ideal triangulation, hence it has at least one puncture or marked point, and $\chi(\Sigma) < 0$. Proposition \ref{prop:almostAzumaya} shows the statement for surfaces with nonempty boundaries (hence there is at least one marked point). 
For a closed surface without punctures, $\cS_q^\MRY(\Sigma)$ is the same as the ordinary skein algebra $\cS_q(\Sigma)$, and the result was shown in \cite{FKBL19}.  In the case of a marked surface without punctures, it was proven by \cite{Kor21} for the Muller skein algebra. 
Therefore, the only remaining cases are for surfaces without boundary but with interior punctures, hence $\cS_q^{\MRY}(\Sigma) = \cS_q^{\mathrm{RY}}(\Sigma)$. Remark \ref{rmk:approachviaorderlyfinitegeneration} suggests an alternative possible approach covering all cases including no boundary.

The following characterization of almost Azumaya algebras is particularly useful:
\begin{theorem}[\protect{\cite[III.1.7]{BG02}, \cite[Theorem 2.7]{FKBL19}}]\label{thm:almostAzumaya}
Let $A$ be a $\CC$-algebra. Suppose $A$ is 
\begin{enumerate}
    \item finitely generated as a $\CC$-algebra; 
    \item prime; 
    \item finitely generated as a $Z(A)$-module. 
\end{enumerate}
Then, $A$ is almost Azumaya.
\end{theorem}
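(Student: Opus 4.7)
The plan is to leverage standard PI-theoretic machinery, specifically Posner's theorem combined with the fact that the Azumaya locus of a prime PI algebra is a non-empty Zariski open subset of $\mspec Z(A)$. First I would observe that since $A$ is finitely generated as a module over its center $Z(A)$, it embeds in some matrix algebra $\rM_{N}(Z(A))$ and hence satisfies a polynomial identity (for example the standard identity $s_{2N}$). Together with the assumption that $A$ is prime, Posner's theorem applies: the central localization $Q(A) := A \otimes_{Z(A)} Q(Z(A))$ is a central simple algebra of finite dimension $d^{2}$ over the field of fractions $Q(Z(A))$, where $d$ is the PI degree of $A$.

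Next I would locate the Azumaya locus inside $\mspec Z(A)$. By the Artin--Tate lemma, hypotheses (1) and (3) force $Z(A)$ to be a finitely generated commutative $\CC$-algebra, so $\mspec Z(A)$ is an affine variety, and primeness of $A$ makes $Z(A)$ a domain, so this variety is irreducible. For a maximal ideal $\mathfrak{m} \subset Z(A)$, the fiber $A/\mathfrak{m}A$ is a finite dimensional $\CC$-algebra of dimension at most $d^{2}$, with equality exactly when $A/\mathfrak{m}A \cong \rM_{d}(\CC)$. The locus $U \subset \mspec Z(A)$ where equality holds is Zariski open, cut out as the non-vanishing locus of an appropriate reduced-trace (Cayley--Hamilton) discriminant, and it is non-empty because base change to the generic point $Q(Z(A))$ already produces the central simple algebra of dimension $d^{2}$.

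Finally I would choose any non-zero $c \in Z(A)$ whose zero locus is contained in the complement of $U$. Then every maximal ideal of $Z(A)_{c}$ lies in the Azumaya locus, and I would verify two facts fibrewise: that $A_{c}$ is projective over $Z(A)_{c}$ of constant rank $d^{2}$, and that the canonical multiplication map
\[
A_{c} \otimes_{Z(A)_{c}} A_{c}^{op} \to \mathrm{End}_{Z(A)_{c}}(A_{c})
\]
is an isomorphism. Both assertions reduce to the corresponding statements about $\rM_{d}(\CC)$, which are standard. This shows $A_{c}$ is Azumaya, so $A$ is almost Azumaya.

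The main obstacle is certifying that the Azumaya locus is non-empty and open, and exhibiting a concrete central $c$ that cuts out its complement. The non-emptiness and openness both rest on Posner's theorem: primeness forces a well-defined generic PI degree, and the complement of the Azumaya locus is the vanishing locus of a reduced-trace discriminant whose non-vanishing at the generic point is what Posner's theorem supplies. Once these pieces are in place, the extraction of $c$ and the fibrewise verification of the Azumaya conditions are essentially formal.
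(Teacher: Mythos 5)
The paper does not supply its own proof of this theorem; it is cited as standard from \cite{BG02} (III.1.7) and \cite{FKBL19} (Theorem 2.7), and your argument follows the same PI-theoretic route those sources use (Artin--Tate to get $Z(A)$ affine, Posner for the central simple algebra at the generic point, then openness and non-emptiness of the Azumaya locus). So the strategy is right, but two steps need repair.

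First, the opening claim that module-finiteness forces an embedding $A \hookrightarrow \rM_N(Z(A))$ is not justified: the left regular representation gives $A \hookrightarrow \mathrm{End}_{Z(A)}(A)$, and since $A$ need not be a free $Z(A)$-module, this is not a matrix ring over $Z(A)$. The correct route to a PI uses primeness: $Z(A)$ is a domain, $A$ is $Z(A)$-torsion-free (an easy check in a prime ring), so $A$ embeds into $A \otimes_{Z(A)} \mathrm{Frac}(Z(A))$, which is finite-dimensional over the fraction field and hence satisfies a standard identity $s_{2d}$; this identity pulls back to $A$. This also feeds directly into Posner. Second, your inequality on fiber dimensions is reversed. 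For a finitely generated module over a Noetherian domain, the function $\mathfrak{m} \mapsto \dim_\CC(A/\mathfrak{m}A)$ is \emph{upper} semicontinuous, and its generic (minimum) value is $d^{2}$; thus $\dim_\CC(A/\mathfrak{m}A) \geq d^{2}$ for every maximal ideal, with equality precisely on the Azumaya locus. Writing ``at most $d^{2}$'' is false and would make the openness argument collapse. With these two corrections the argument is sound and matches the proof in the cited references.
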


For $\cS_{q}^{\MRY}(\Sigma)$ or $\cS_{q}^{\mathrm{RY}}(\Sigma)$, Item (1) is known in \cite{BKWP16, BKL24, KMW25+}. For $\Sigma = \Sigma_{0,n}$, a genus 0 surface with $n$ punctures but without any boundary, an explicit presentation is also known \cite{ACDHM21}. Item (2) is obtained in \cite{BKL24} as a consequence of an embedding into a quantum torus. By checking Item (3), we show that $\cS_{q}^{\MRY}(\Sigma)$ is almost Azumaya (Proposition \ref{prop:almostAzumaya}). 
The argument here is a minor variation of \cite[Section 6.2]{BKL24}, modifying the setting with vertex classes.

Let $P = [P_{ij}]$ be an $n \times n$ integral skew-symmetric matrix. We may define the the quantum torus $\TT(P)$ as a non-commutative $\CC$-algebra 
\begin{equation}\label{eqn:quantumtorus}
	\CC\langle x_{1}^{\pm}, x_{2}^{\pm}, \cdots, x_{n}^{\pm}\rangle /
	(x_{i}x_{j} = q^{P_{ij}}x_{j}x_{i}).
\end{equation}
We also obtain a positive subalgebra $\TT^{+}(P) \subset \TT(P)$ generated by $x_{1}, x_{2}, \cdots, x_{n}$.

We use the following result.

\begin{lemma}[\protect{\cite[Lemma 8.5]{KW24}}]\label{lem:almostAzumaya}
Let $P = [P_{ij}]$ be a skew-symmetric integral matrix. Let $A$ be a finitely generated $\CC$-algebra such that $\TT^{+}(P) \subset A \subset \TT(P)$. Then $A$ is almost Azumaya and the PI degree of $A$ is equal to that of $\TT(P)$. 
\end{lemma}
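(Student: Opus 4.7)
The plan is to verify the three hypotheses of Theorem \ref{thm:almostAzumaya} for $A$, and then to identify the PI degree via a common Goldie quotient. Hypothesis (1) is immediate from the assumption that $A$ is finitely generated as a $\CC$-algebra. For hypothesis (2), the quantum torus $\TT(P)$ is a domain: it has a $\CC$-basis of ordered monomials $x_{1}^{a_{1}}\cdots x_{N}^{a_{N}}$ with $a_{i}\in \ZZ$ (where $N$ denotes the size of $P$), and the product of any two such monomials is again a single monomial up to a nonzero scalar. Any subring of a domain is a domain, hence prime; in particular $A$ is prime.

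The main step is hypothesis (3), that $A$ is finitely generated as a module over its center $Z(A)$. Because $q^{n}=1$, for all $i,j$ one has
\[
x_{i}^{n} x_{j} \;=\; q^{n P_{ij}} x_{j} x_{i}^{n} \;=\; x_{j} x_{i}^{n},
\]
so each $x_{i}^{n}$ lies in $Z(\TT(P))$. Since $x_{i}\in \TT^{+}(P) \subset A$, the power $x_{i}^{n}$ lies in $A$, hence in $Z(A)$. Let $C := \CC[x_{1}^{n}, \ldots, x_{N}^{n}]\subset Z(A)$; this is a commutative Noetherian polynomial subring. Using the relations $x_{i}x_{j} = q^{P_{ij}} x_{j} x_{i}$ to reorder and the identity $q^{n}=1$ to reduce exponents modulo $n$, one checks that $\TT(P)$ is finitely generated (in fact free) as a $C$-module, with basis the finite set of monomials $x_{1}^{a_{1}}\cdots x_{N}^{a_{N}}$ with $0 \le a_{i} < n$. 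Since $C\subset A$, the algebra $A$ is a $C$-submodule of the finitely generated $C$-module $\TT(P)$; because $C$ is Noetherian, $A$ is itself finitely generated as a $C$-module, a fortiori as a $Z(A)$-module. Applying Theorem \ref{thm:almostAzumaya} then gives that $A$ is almost Azumaya.

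For the PI degree, both $A$ and $\TT(P)$ are prime Noetherian PI rings. The full quantum torus $\TT(P)$ is obtained from $\TT^{+}(P)$ by Ore-localizing at the multiplicative set generated by $x_{1}, \ldots, x_{N}$; the Ore conditions are standard here because the generators skew-commute up to a unit. It follows that $\TT^{+}(P)$, $A$, and $\TT(P)$ all share the same Goldie quotient skew field. Since the PI degree of a prime PI ring equals that of its Goldie quotient ring, the PI degrees of $A$ and $\TT(P)$ coincide.

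The main obstacle I anticipate is the module-finiteness step: one must carefully justify that reordering monomials using the skew commutation produces only scalar multiples of products of an element of $C$ with a standard-form monomial, so that the explicit finite set indeed spans $\TT(P)$ over $C$. Secondary technicalities --- verifying the Ore property and invoking PI-degree invariance under Ore localization --- are routine consequences of primeness once the main step has been established.
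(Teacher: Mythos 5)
Because the paper itself does not reproduce a proof of this lemma (it cites \cite[Lemma 8.5]{KW24}), I assess your argument on its own merits. Your verifications of hypotheses (1) and (2) of Theorem~\ref{thm:almostAzumaya} are correct, and the identification of PI degrees via the common Goldie quotient division ring of $\TT^{+}(P)\subset A\subset \TT(P)$ is sound. However, there is a genuine gap in hypothesis (3).

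You set $C:=\CC[x_{1}^{n},\ldots,x_{N}^{n}]$ and assert that $\TT(P)$ is a finitely generated (free) $C$-module with basis $\{x_{1}^{a_{1}}\cdots x_{N}^{a_{N}} : 0\le a_{i}<n\}$. This is false: the element $x_{1}^{-1}\in\TT(P)$ cannot be written as a $C$-linear combination of such monomials, since every element of $C\cdot x_{1}^{a_{1}}\cdots x_{N}^{a_{N}}$ has nonnegative $x_{1}$-exponent. What is true is that $\TT(P)$ is free of rank $n^{N}$ over the Laurent polynomial ring $\CC[x_{1}^{\pm n},\ldots,x_{N}^{\pm n}]$, but that larger central subring need not lie inside $A$: the hypothesis $\TT^{+}(P)\subset A$ gives you $x_{i}^{n}\in Z(A)$, but not $x_{i}^{-n}\in A$. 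So the Noetherian-submodule argument you invoke does not apply as stated. What your argument actually shows is that $\TT^{+}(P)$ is finite over its center; the algebra $A$ can be strictly larger, and indeed $A=\TT(P)$ itself is not a finitely generated $C$-module.

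The gap is substantive rather than notational. For a general $A$ strictly between $\TT^{+}(P)$ and $\TT(P)$, proving module-finiteness over $Z(A)$ requires using the finite generation of $A$ in an essential way (your proof uses it only for hypothesis (1)). One must relate the exponent support of $A$ to that of $Z(A)=Z(\TT(P))\cap A$ and show the latter is large enough that finitely many monomials of $A$ generate over it; carrying this out involves a monoid/cone analysis and is the real content of the cited lemma. As written, your step (3) does not establish what is needed.
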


\begin{proposition}\label{prop:almostAzumaya}
Let $q$ be a root of unity. 
Let $\Sigma$ be a triangulable marked surface with at least one boundary marked point. Then:
\begin{enumerate}
\item There is a quantum torus $\TT(P)$ such that 
\begin{equation}\label{eqn:MRYisbetweenQTs}
	\TT^{+}(P) \subset \cS_{q}^{\MRY}(\Sigma) \subset \TT(P), 
\end{equation}
where $\TT^{+}(P)$ is the positive part of $\TT(P)$. 
\item The skein algebra $\cS_{q}^{\MRY}(\Sigma)$ is almost Azumaya and its PI-degree is equal to that of $\mathbb{T}(P)$. 
\end{enumerate}
\end{proposition}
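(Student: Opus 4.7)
The plan is to verify the sandwich $\TT^{+}(P) \subset \cS_q^{\MRY}(\Sigma) \subset \TT(P)$ for an appropriately chosen quantum torus and then invoke Lemma \ref{lem:almostAzumaya} directly. Since finite generation of $\cS_q^{\MRY}(\Sigma)$ as a $\CC$-algebra is already known from the references cited in the text, claim (2) reduces to claim (1). The proof strategy follows the pattern established for the stated/Muller version in the cited BKL24 construction, but adapted to handle arcs whose endpoints lie at interior punctures.

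First, I would fix an ideal triangulation $\Delta$ of $\Sigma$, which exists because $\Sigma$ is triangulable and has at least one boundary marked point. Let $E$ be the set of edges of $\Delta$. Next, I would upgrade the quantum torus embedding used in BKL24 by enlarging the exponent lattice from $\ZZ^E$ to $\tfrac{1}{2}\ZZ^E \oplus \ZZ^{V_\circ}$, keeping the vertex variables $v_i^{\pm}$ central. The embedding $\cS_q^{\MRY}(\Sigma) \hookrightarrow \TT(P)$ would send each reduced multicurve $\alpha \in \RMC$ with $\deg_V(\alpha) = \sum_{v \in W} \be_v$ to, up to a power of $q$, the monomial $\bigl(\prod_{v \in W} v^{-1/2}\bigr) x^{f(\alpha)}$, where $f$ is the generalized edge coordinate from Section \ref{ssec:coordinates}. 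Multiplicativity of this assignment is controlled by Lemma \ref{lem:edgecoordinequality} (for the full product) and by Lemma \ref{lem:leadingterm} (for identifying the leading monomial), while injectivity follows from Lemma \ref{lem:GECisinjective}. The lower inclusion $\TT^{+}(P) \subset \cS_q^{\MRY}(\Sigma)$ is then checked generator by generator: edges $e \in E$ not touching $V_\circ$ correspond directly to boundary arcs, interior arcs, or short loops in $\cS_q^{\MRY}(\Sigma)$, and edges $e$ between two distinct interior punctures are realized by the square-trick lifts $\sqrt{v}\sqrt{w}\beta$, which despite the formal $v^{-1/2}$ factors give bona fide elements of $\cS_q^{\MRY}(\Sigma)$ via the mechanism described in Remark \ref{rmk:Tnbetaisinthealgegbra}.

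The main obstacle is the half-integer accounting. Arcs joining two distinct interior punctures have edge coordinates in $\tfrac{1}{2}\ZZ^E$, so the target quantum torus must be constructed over the refined lattice, and the skew form $P$ must be arranged so that the quasi-commutation of two such half-integer monomials matches the skein-algebra quasi-commutation on the nose. The square-trick computations of Section \ref{ssec:coordinates} — especially Lemmas \ref{lem:edgecoordinequality} and \ref{lem:leadingterm} — are precisely what make this bookkeeping work, since they show that applying the puncture-skein relation twice at each interior endpoint converts $\alpha^2 \prod_{v \in W} v$ into a genuine integer-coordinate element whose leading monomial is $2f(\alpha)$. Once this is settled, all remaining verifications are routine, and Lemma \ref{lem:almostAzumaya} yields (2) along with the equality of PI degrees of $\cS_q^{\MRY}(\Sigma)$ and $\TT(P)$.
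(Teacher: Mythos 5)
Your proposal takes a genuinely different route from the paper, and the route has a real gap. The paper does not build a quantum torus from the generalized edge coordinate $f$ of Section~\ref{ssec:coordinates}; instead it constructs a \emph{special} ideal triangulation $\lambda$ in which every interior puncture is encircled by an ideal arc based at a single boundary marked point $p$ (so that each puncture sits in a once-punctured monogon of $\lambda$), takes the index set $I = \bar\lambda \sqcup \{v_i\}$ with $\bar\lambda$ the edges of $\lambda$ with the encircling ideal arcs removed, defines an integer-valued skew form $P_I$ on $I$ with the vertex variables central, and then appeals to the explicit embedding of $\cS_q^{\MRY}(\Sigma)$ into $\TT(P_I)$ proved in \cite[Theorem 6.5 and Eq. (59)]{BKL24}. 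That embedding is a genuine algebra homomorphism coming from a quantum-trace-style construction, not a ``send $\alpha$ to its leading monomial'' map. Your proposed assignment $\alpha \mapsto \bigl(\prod_{v\in W} v^{-1/2}\bigr) x^{f(\alpha)}$ is only the top-filtered piece: Lemmas~\ref{lem:edgecoordinequality} and~\ref{lem:leadingterm} control leading terms of products (which is exactly what the center computation needs), but they say nothing about the lower-order terms and therefore do not show that the assignment is multiplicative. Without that, you have no map $\cS_q^{\MRY}(\Sigma)\to\TT(P)$ at all, and Lemma~\ref{lem:almostAzumaya} cannot be applied.

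A second, more local issue: you claim the generators of $\TT^+(P)$ sitting over interior arcs are ``realized by the square-trick lifts $\sqrt{v}\sqrt{w}\beta$, which \ldots give bona fide elements of $\cS_q^{\MRY}(\Sigma)$ via the mechanism described in Remark~\ref{rmk:Tnbetaisinthealgegbra}.'' That remark says the opposite about the individual element: $\sqrt{v}$, and hence $\sqrt{v}\sqrt{w}\beta$, is \emph{not} in $\cS_q^{\MRY}(\Sigma)$; only the full Chebyshev expression $\frac{1}{\sqrt{v}\sqrt{w}}T_n(\sqrt{v}\sqrt{w}\beta)$ is, because $T_n$ has only odd-degree terms. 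So the generator-by-generator check of $\TT^+(P)\subset\cS_q^{\MRY}(\Sigma)$ as you describe it fails. The paper's choice of triangulation is what sidesteps this: by removing the encircling ideal arcs from the generating set (passing from $\lambda$ to $\bar\lambda$) and adding the vertex variables $\{v_i\}$ as separate central generators, one never needs half-integer powers of the $v_i$ or of the edge coordinates. If you want to pursue your half-lattice idea, you would have to prove from scratch that some explicitly defined map to such a torus is an algebra homomorphism, which is considerably more work than citing the existing BKL24 embedding and is not something the edge-coordinate lemmas provide.
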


\begin{proof}
The proof  is a minor variation of \cite[Section 6.2]{BKL24}. For the reader's convenience, here we highlight an overall strategy and where we need to make a change.

For $\cS_{q}^{\MRY}(\Sigma)$, we construct a quantum torus $\TT(P)$ such that the inclusions \eqref{eqn:MRYisbetweenQTs} hold. Lemma \ref{lem:almostAzumaya} guarantees being almost Azumaya.

If $\Sigma$ has no interior punctures, then $\cS_{q}^{\MRY}(\Sigma)$ is the Muller's skein algebra $\cS_{q}^{\rM}(\Sigma)$. In this case, Muller constructed the quantum torus $\TT(P)$ in \cite[Theorem 6.14]{Mul16}.

Next, we consider the case when $\Sigma$ has at least one interior puncture. The desired inclusions in \eqref{eqn:MRYisbetweenQTs} were shown in \cite[Equation (59)]{BKL24} without vertex classes. However, the proof almost works for our setting including vertex classes. To apply the argument of \cite[Section 6.2]{BKL24}, we modify the definition of the skew-symmetric matrix $P$ to obtain a quantum torus.

First, we take a specific triangulation $\lambda$ of $\Sigma$ as follows.  Fix a boundary marked point $p$.   For every interior puncture,  take an embedded path connecting $p$. We may take the paths in such a way that they do not intersect each other except at $p$. For each path, there is a unique ideal arc  with both endpoints at $p$  which encircles the path.  Cutting along $\Sigma$ along one of these ideal arcs removes  a once-punctured monogon. We can choose the ideal arcs so that they do not intersect each other except at $p$.   Then we extend this collection of arcs from $p$ to interior punctures and ideal arcs surrounding them to an ideal triangulation $\lambda$ of $\Sigma$.

Let $\bar{\lambda}$ denote the subset of $\lambda$ obtained by removing all the ideal arcs. 
Let $\{v_{i}\}$ be the set of interior punctures. We take $I := \bar{\lambda} \sqcup \{v_{i}\}$ as the set of generating variables of the quantum torus we will construct. Then, define an skew-symmetric bilinear form $P_{I}$ on $I$ by 
\[
	P_I(e,e'):=\#\begin{array}{c}\includegraphics[scale=0.25]{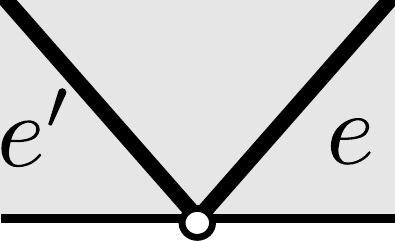}\end{array}
-\#\begin{array}{c}\includegraphics[scale=0.25]{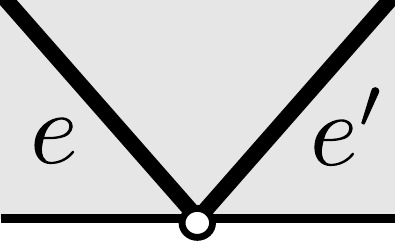}\end{array},\quad P_I(v_i, x)=0\text{ for any $x\in I$}, 
\]
where there are possibly other arcs between the arcs shown in the picture. 
Regarding $P_I$ as a matrix, we may construct a quantum torus $\TT(P_{I})$ associated to $P_{I}$.

Using the same argument as the proof of Theorem 6.5 in \cite{BKL24}, we obtain 
\[
	\TT^{+}(P_{I}) \subset \cS_{q}^{\MRY}(\Sigma) \subset \TT(P_{I}).\qedhere
\]
\end{proof}

\begin{remark}\label{rmk:approachviaorderlyfinitegeneration}

Thang L\^e kindly suggested the following alternative approach to show $\cS_q^\MRY(\Sigma)$ is almost Azumaya. As explained after Theorem \ref{thm:almostAzumaya}, it is sufficient to prove that $\cS_q^\MRY(\Sigma)$ is a finitely generated $Z(\cS_q^\MRY(\Sigma))$-module.

For a domain $R$, an $R$-algebra $A$ is \emph{orderly finitely generated} if there exists elements $a_1,\dots ,a_k\in A$ such that the set of all ordered monomials $\{ a_1^{n_1}\cdots  a_k^{n_k} \}_{n_i \in \ZZ_{\ge 0}}$ spans $A$ over $R$. In addition, if there is a polynomial $P_i$ for each $a_i$ such that $P_i(a_i)$ is central in $A$, then  one can decrease the degree on $a_i$ until it is equal to or less than the degree of $P_i$ and $A$ is finite dimensional as a $Z(A)$-module, as it is generated by $\{ a_1^{n_1}\cdots  a_k^{n_k} \}_{0 \le n_i < \deg P_i}$. Therefore, we obtain the finiteness  over $Z(A)$ and hence $A$ is almost Azumaya.

In \cite[Theorem 2 (b)]{BKL24}, it was proven that $\cS_{q}^{\rm RY}(\Sigma)$ is orderly finitely generated. One can also deduce it for  $\cS_{q}^{\rm MRY}(\Sigma)$ as follows. The LRY skein algebra is orderly finitely generated 
 \cite[Theorem 5 (c)]{BKL24}.  So its quotient, the reduced LRY skein algebra, is also orderly finitely generated.   
From \cite[Theorem 5.2]{LY22} or \cite{KMW25+}, the reduced LRY skein algebra is isomorphic to the 'boundary-localized' MRY skein algebra. The difference between $\cS_{q}^{\rm MRY}(\Sigma)$  and its boundary-localized version is invertible boundary edges, which are commutative with other elements up to a power of $q$.  Thus $\cS_{q}^{\rm MRY}(\Sigma)$ is also orderly finitely generated.

To apply the above strategy to show almost Azumaya in the case of skein algebras, we need that the generators are simple curves.  Suppose for the moment that we did have such a set, say $\{\alpha_1, \alpha_2, \cdots, \alpha_k\}$.   Then the threading operation is the same as multiplication, $\alpha_i^P = P(\alpha_i)$. And, as explained in Section \ref{sec:central}, we can use $T_n(x)$ or $x^n$ for $P$ to obtain central elements.  Then the argument above can be used to show that $\cS_q^{\mathrm{RY}}(\Sigma)$ is finite over $Z(\cS_q^{\mathrm{RY}}(\Sigma))$. 
At this point, we do not know an explicit orderly finitely generating set consisting of simple curves of $\cS_{q}^{\rm RY}(\Sigma)$. 
One might find such a generating set using admissibility arguments as in \cite{AF17} or the filtration in \cite{BKL24}. 
The same applies to $\cS_{q}^{\rm MRY}(\Sigma)$, but still our strategy of the proof of Proposition~\ref{prop:almostAzumaya} has an advantage -- it tells us what the PI-degree is. 
\end{remark}

Observe that the Artin-Tate lemma \cite[Lemma 13.9.10]{MR01} implies that under the assumptions of Theorem \ref{thm:almostAzumaya}, $Z(A)$ is a finitely generated $\CC$-algebra, thus $\mathrm{MaxSpec}\;Z(A)$ is an affine algebraic variety. Applying the discussion in Section \ref{ssec:Azumaya} to the skein algebra, it follows that there is a Zariski open subset $U$ of the variety $\mathrm{MaxSpec}\; Z(\cS_{q}^{\MRY}(\Sigma))$ where every point of $U$ corresponds to a unique finite-dimensional irreducible representation of $\cS_{q}^{\MRY}(\Sigma)$.  However, questions remain regarding the classification of such representations.

\begin{question} 
\begin{enumerate}
\item Describe the Azumaya locus $U \subset \mathrm{MaxSpec}\; Z(\cS_{q}^{\MRY}(\Sigma))$ explicitly. 
\item For the Azumaya points  $m \in U$, construct a finite dimensional irreducible representation $V$ of $\cS_{q}^{\MRY}(\Sigma)$ geometrically.
\item For the non-Azumaya points $m \notin U$, classify the corresponding finite dimensional irreducible representations of $\cS_{q}^{\MRY}(\Sigma)$. 
\end{enumerate}
\end{question}

%%%%%%%%%%%%%%%%%%%%%%%%%%%%%%%%%%%%

\end{document}